\newtheorem{teo}{Theorem}[section]
\newtheorem{lema}[teo]{Lemma}
\newtheorem{cor}[teo]{Corollary}
\newtheorem{prop}[teo]{Proposition}
\theoremstyle{definition}
\newtheorem{defi}[teo]{Definition}
\newtheorem{remark}[teo]{Remark}
\DeclareMathOperator{\Kspan}{span}
\DeclareMathOperator{\parpar}{par}
\DeclareMathOperator{\Aut}{Aut}
\DeclareMathOperator{\Id}{Id}
\newcommand{\F}{{\mathbb F}}
\newcommand{\ob}{ \mathrm{ob} }
\newcommand{\mor}{ \mathrm{mor} }
\newcommand{\id}{ \mathrm{id} }
\newcommand{\de}{\delta}
\newcommand{\af}{\alpha}
\newcommand{\bt}{{\beta}}
\newcommand{\m}{{}^{-1}}
\def\ndv{\ {\mid \kern -0.7 em {\scriptstyle \not}} \ \ }
\def\nd{\ {\mid \kern -0.4 em {\scriptstyle \not}} \ \ }
\newtheorem{rem}{Remark}
\date{\today}
\title[Artinian and Noetherian Partial Skew Groupoid Rings]{Artinian and Noetherian Partial Skew Groupoid Rings}
\begin{document}

\author{Patrik Nystedt}
\address{University West,
Department of Engineering Science, 
SE-46186 Trollh\"{a}ttan, Sweden}

\author{Johan \"{O}inert}
\address{Blekinge Institute of Technology,
Department of Mathematics and Natural Sciences,
SE-37179 Karlskrona, Sweden}

\author{H\'{e}ctor Pinedo}
\address{Universidad Industrial de Santander, Escuela de Matem\'{a}ticas,
Carrera 27 Calle 9,
Edificio Camilo Torres
Apartado de correos 678,
Bucaramanga, Colombia}

\email{patrik.nystedt@hv.se; johan.oinert@bth.se; hpinedot@uis.edu.co}

\subjclass[2010]{16S35, 16S99, 16P20, 16N99, 17A05, 17A99}
\keywords{artinian ring, noetherian ring, partial skew groupoid ring, partial skew group ring, partial group algebra, Leavitt path algebra, globalization, Morita equivalence}

\begin{abstract}
Let $\alpha = \{ \alpha_g : R_{g^{-1}} \rightarrow R_g \}_{g \in \mor(G)}$
be a partial action of a groupoid $G$ on a non-associative ring $R$
and let $S = R \star_{\alpha} G$ be the associated 
partial skew groupoid ring.
We show that if $\alpha$ is global and unital, 
then $S$ is left (right) artinian if and only if 
$R$ is left (right) artinian and $R_g = \{ 0 \},$
for all but finitely many $g \in \mor(G)$.
We use this result to prove that if $\alpha$ is unital and $R$ is alternative,
then $S$ is left (right) artinian
if and only if $R$ is left (right) artinian and $R_g = \{ 0 \},$
for all but finitely many $g \in \mor(G)$.
Both of these results apply to partial skew group rings, and in particular
they generalize a result by J. K. Park for classical skew group rings, i.e. the 
case when $R$ is unital and associative, and $G$ is a group
which acts globally on $R$. Moreover,
we provide two
applications of our main result.
Firstly, we generalize I. G. Connell's classical result for group rings
by giving a characterization of artinian (non-associative) groupoid rings.
This result is in turn applied to partial group algebras.
Secondly, we give a characterization of artinian Leavitt path algebras.
At the end of the article, we use globalization to 
analyse noetherianity and artinianity of partial skew
groupoid rings as well as establishing two Maschke-type
results, thereby generalizing results by Ferrero and Lazzarin
from the group graded case to the groupoid situation. 
\end{abstract}

\maketitle

\pagestyle{headings}

\section{Introduction}

In 1963, I. G. Connell \cite{C} showed that if $R$ is an associative and unital ring, and $G$ is a group,
then the group ring
$R[G]$
is left (right) artinian if and only if $R$ is left 
(right) artinian and $G$ is finite.
Later on, D. S. Passman gave examples of artinian twisted group rings by infinite groups (see \cite[Section 4]{passman1971}).
Passman's examples show that Connell's result can not be generalized
to twisted group rings or, more generally, crossed products.

Another type of crossed products, generalizing group rings, are the skew group rings.
Recall that if $\alpha : G \ni g \mapsto \alpha_g \in \Aut(R)$ 
is a group homomorphism from $G$
to $\Aut(R)$, the group of ring automorphisms of $R$,
then the \emph{skew group ring} $R *_{\alpha} G$ is the set 
of finite formal sums of the form $\sum_{g \in G} r_g g$
with addition defined componentwise and multiplication
defined by the relations $(r g) (s h) = (r \alpha_g(s)) gh$,
for $r,s \in R$ and $g,h \in G$.
In 1979, J. K. Park \cite{P} generalized Connell's result to skew group rings
by showing the following.

\begin{teo}[Park \cite{P}]\label{ParkTheorem}
If $R$ is a unital and associative ring, and $\alpha$
is a group homomorphism from a group $G$ to $\Aut(R)$,
then the skew group ring $R *_{\alpha} G$ is left (right) artinian 
if and only if $R$ is left (right) artinian and $G$ is finite. 
\end{teo}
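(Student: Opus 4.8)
The plan is to prove the two implications separately, reducing the harder one to a statement about skew group rings of simple artinian rings. For the ``if'' direction, suppose $R$ is left artinian and $G$ is finite. Since every $\alpha_g$ is a bijection $R\to R$, the underlying left $R$-module of $S:=R*_{\alpha}G=\bigoplus_{g\in G}Rg$ is free of finite rank $|G|$, hence a finitely generated, and therefore artinian, left $R$-module. As every left ideal of $S$ is in particular a left $R$-submodule of $S$, the ring $S$ is left artinian; the right-handed statement is identical. For the ``only if'' direction, assume $S$ is left artinian (and, harmlessly, that $R\neq\{0\}$). First I would show that $R$ is left artinian: using that each $\alpha_g$ is a ring automorphism of $R$, for a left ideal $J$ of $R$ the $g$-component of the left ideal $SJ$ of $S$ equals $\alpha_g(J)$, so $SJ\cap R=J$ and $SJ_1\subseteq SJ_2\iff J_1\subseteq J_2$; thus $J\mapsto SJ$ is an order-embedding of the left ideal lattice of $R$ into that of $S$, and the descending chain condition descends. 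It remains to prove that $G$ is finite, which is the substantial part.

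The engine of that argument would be two transfer principles. \emph{First:} for every normal subgroup $N\trianglelefteq G$, fixing a transversal $T$ of $N$ in $G$ gives $S=\bigoplus_{t\in T}t\,(R*_{\alpha}N)$, and each summand is a sub-bimodule over $A:=R*_{\alpha}N$ (this uses normality: $ntn'$ and $tn'n$ lie in $tN$ for $n,n'\in N$); arguing as in the previous paragraph, $L\mapsto SL$ order-embeds the left ideals of $A$ into those of $S$, so if $S$ is left artinian then so is $R*_{\alpha}N$. \emph{Second:} if $N\subseteq\ker\alpha$, then $\bar\alpha\colon G/N\to\Aut(R)$ is well defined and $rg\mapsto r\,(gN)$ is a surjective ring homomorphism $S\twoheadrightarrow R*_{\bar\alpha}(G/N)$, so $R*_{\bar\alpha}(G/N)$ is again left artinian.

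With these in hand I would reduce as follows. Since $J(R)$ is characteristic it is $\alpha$-invariant, and $S/J(R)S\cong(R/J(R))*_{\bar\alpha}G$ is a quotient ring of $S$; so one may replace $R$ by $R/J(R)$ and assume $R$ is semisimple artinian and nonzero. Then $R*_{\alpha}(\ker\alpha)=R[\ker\alpha]$ is an ordinary group ring and is left artinian, so Connell's theorem \cite{C} forces $\ker\alpha$ to be finite; and $R*_{\bar\alpha}(G/\ker\alpha)$ is left artinian with $G/\ker\alpha$ acting faithfully, so it suffices to treat a faithful action. Finally, writing $R=R_1\times\cdots\times R_k$ with each $R_i$ simple artinian, $G$ permutes the factors; the kernel $M$ of $G\to\mathrm{Sym}\{R_1,\dots,R_k\}$ is normal of finite index, $R*_{\alpha}M\cong\prod_i(R_i*_{\alpha}M)$ is left artinian, hence so is each $R_i*_{\alpha}M$, and a further application of Connell and the faithful-quotient principle factor by factor (using that faithfulness of $M$ on $R$ makes the common kernel of the maps $M\to\Aut(R_i)$ trivial) reduces everything to: \emph{if a group $H$ acts faithfully on a simple artinian ring $M_m(D)$ and $M_m(D)*_{\alpha}H$ is left artinian, then $H$ is finite}.

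That final assertion is where the real work lies, and is the step I expect to be the main obstacle. Via the description of $\Aut(M_m(D))$ (inner automorphisms composed with entrywise automorphisms of $D$) together with a Morita-equivalence reduction, it comes down to analysing crossed products of the division ring $D$ by infinite groups. One clean case is an infinite-order element lying in a finite-index normal subgroup: the transfer principle then produces a skew Laurent ring $D[t,t^{-1};\sigma]$ inside the picture, in which $D[t,t^{-1};\sigma](t-1)^{n}$ ($n\geq0$) is a strictly descending chain of left ideals, because right multiplication by $t-1$ is an injective but non-surjective endomorphism of the regular left module; hence $S$ is not left artinian. The hard cases are the remaining infinite groups --- infinite torsion groups, and infinite simple groups with no proper subgroup of finite index --- for which one cannot retreat to a subring and must instead exhibit a strictly descending chain of left ideals globally in $S$. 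It is exactly here that the hypothesis ``$\alpha$ a genuine group action'' is indispensable: by Passman's examples \cite[Section 4]{passman1971} the conclusion is false for arbitrary twisted crossed products, so no purely formal argument can work, and the cocycle that appears in the Morita reduction must be shown to be trivial or else circumvented.
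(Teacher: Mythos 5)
The paper offers no proof of Theorem~\ref{ParkTheorem}: it is quoted from Park \cite{P} and used as a black box (notably inside the proof of Proposition~\ref{lemmaskewgroupoidring}), so your attempt has to stand on its own. The preparatory reductions you make are all sound: the freeness argument for the ``if'' direction, the order-embedding $J\mapsto SJ$ showing that $R$ inherits the descending chain condition, the passage to subgroup rings $R*_{\alpha}N$ and to quotient actions $G/N$ for $N\subseteq\ker\alpha$, the replacement of $R$ by $R/J(R)$, the use of Connell \cite{C} on $R[\ker\alpha]$, and the splitting over the simple factors permuted by $G$. But all of this terminates at the assertion that an infinite group cannot act faithfully on a simple artinian ring with left artinian skew group ring, and that assertion is the entire content of the theorem beyond Connell. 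You explicitly leave the infinite torsion groups and the infinite groups with no proper finite-index subgroup unproved, so what you have is a correct reduction scheme, not a proof.

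The missing idea is to pass to the center instead of analysing $\Aut(M_m(D))$ and the subgroup structure of $H$. If $R$ is simple and unital, then $F=Z(R)$ is a field (Proposition~\ref{Zfield}) and is invariant under every ring automorphism of $R$, so $G$ acts on $F$; choosing an $F$-complement $V$ of $F$ in $R$ exhibits $F*_{\beta}G$ as a direct summand of $R*_{\alpha}G$ as a right $F*_{\beta}G$-module, whence $F*_{\beta}G$ is left artinian by Proposition~\ref{lemmasummand}. Now let $N$ be the kernel of $G\to\Aut(F)$ (typically much larger than $\ker\alpha$, since inner automorphisms fix $F$): the group algebra $F[N]$ is left artinian, so $N$ is finite by Connell, and $F*_{\bar\beta}(G/N)$ is a left artinian quotient on which $G/N$ acts faithfully. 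There $F$ is a simple faithful left module with endomorphism ring the fixed field $F^{G/N}$ (simplicity because the only $G/N$-stable $F$-subspaces of $F$ are $\{0\}$ and $F$, faithfulness by Dedekind's independence of automorphisms), so the Jacobson density theorem together with artinianity forces $[F:F^{G/N}]<\infty$, and Artin's theorem gives $|G/N|\le[F:F^{G/N}]<\infty$. This bypasses your ``hard cases'' entirely --- no case analysis on the group is ever needed --- and it is exactly the device the present paper deploys in Proposition~\ref{lemmaskewgroupoidring} to reduce its groupoid generalization back to Park's theorem.
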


In this article, we consider two generalizations (see Theorem \ref{ParkGeneral} and Theorem \ref{ParkGeneralAlternative}) of 
Theorem \ref{ParkTheorem} in the context of partial skew
groupoid rings over non-associative rings, i.e. rings which are not necessarily associative.
Previously, partial skew groupoid rings have been defined only over 
associative rings. However, since there are many interesting
examples of non-associative rings with various types of
actions, it is only natural to seek such a theory in this more 
general sense. For instance, our Theorem \ref{ParkGeneral} 
holds when $R$ equals any of the algebras in the infinite chain of classical
Cayley-Dickson doublings: the real numbers $\mathbb{R}$,
the complex numbers $\mathbb{C}$, Hamilton's quaternions $\mathbb{H}$,
Graves' octonions $\mathbb{O}$, the sedenions $\mathbb{S}$,
the trigintaduonions $\mathbb{T}$ etc.
Other important classes of examples to which our
Theorem~\ref{ParkGeneral} can be applied comes from the cases when $R$ is a Jordan algebra or a Baric algebra.

The notion of a partial action of a group on a C*-algebra was introduced by R. Exel
\cite{Exel94}, as an efficient tool to their study.  Since then, the theory of (twisted) partial actions on C*-algebras has
played a key role in the characterization of several classes of C*-algebras as crossed products by (twisted) partial actions,
e.g. AF-algebras \cite{Exel95}, Bunce-Deddens algebras \cite{Exel94BD}, Cuntz-Krieger algebras \cite{ExelLaca99} and Cuntz-Li algebras \cite{BoavaExel13}, (see also the survey \cite{D2}).
In a purely algebraic context, partial skew group rings were 
introduced by M. Dokuchaev and R. Exel \cite{DokuchaevExel05}
as a generalization of classical skew group rings
and as an algebraic analogue of partial crossed product C*-algebras.

Partial group actions can 
be
described in terms of premorphisms, which is a notion introduced by McAlister and Reilly \cite{MR}.
As explained
in \cite{KL}, a partial action of a group $G$ on a set $X$ is a unital premorphism from $G$ to the inverse monoid  $\mathcal{I}(X)$, consisting of bijections between subsets of $X$. This
perspective
motivated to the study of partial actions of 
other algebraic structures rather than groups on sets,  such as semigroups \cite{GouHol1, GouHol2, Hol1, KU, MS}, and  ordered groupoids  \cite{BFP,Gi}. 
Groupoids also appear naturally in the context of partial group actions. Indeed, in \cite{AB} the author constructs a functor from the category of partial actions
to the category of groupoids.
  In \cite{BFP,BP}, partial actions of  groupoids on rings
and the corresponding partial skew groupoid rings were introduced, and recently the authors of \cite{GY} gave a description of Leavitt Path algebras as partial skew groupoid rings.

Recall that  a {\it groupoid}   $G$ is  a small category
with the property that all its morphisms are isomorphisms.
The family of objects and morphisms
of $G$ will be denoted by $\ob(G)$ and $\mor(G)$ respectively. 
As usual one identifies and object $e$ with the identity morphism 
$\Id_e$, so $\ob(G) \subseteq \mor(G)$.
If $g \in \mor(G)$, then the domain and codomain of 
$g$ will be denoted by $d(g)$ and $c(g)$, respectively.
We let $G^2$ denote the set of all pairs $(g,h) \in \mor(G) \times \mor(G)$
that are composable, i.e. such that $d(g)=c(h)$.
Let $R$ be a {\it non-associative ring}.
By this we mean that $R$ is an additive group equipped
with a multiplication which is distributive with respect to addition.
If $R$ is unital, then the multiplicative identity is denoted by $1_R$
and is always assumed to be non-zero.
The identity map $R \rightarrow R$ is denoted by $\id_R$.
Recall from \cite{BP} that
$\alpha = \{ \alpha_g : R_{g^{-1}} \rightarrow R_g \}_{g \in \mor(G)}$
is called a {\it partial action of $G$ on $R$} if for each $g \in \mor(G)$, 
$R_g$ is an ideal of $R_{c(g)}$, $R_{c(g)}$ is an ideal of $R$ and 
$\alpha_g : R_{g^{-1}} \rightarrow R_g$ is a ring isomorphism 
satisfying the following three axioms:
\begin{itemize}
\item[(P1)] if $e \in \ob(G)$, then $\alpha_e = \id_{R_e}$;
\item[(P2)] if $(g,h) \in G^2$, then $R_{(gh)^{-1}} \supseteq 
\alpha_h^{-1} ( R_h \cap R_{g^{-1}} )$;
\item[(P3)] if $(g,h) \in G^2$ and $x \in \alpha_h^{-1} ( R_h \cap R_{g^{-1}} )$,
then $(\alpha_g \circ \alpha_h)(x) = \alpha_{gh}(x)$.
\end{itemize}
The associated {\it partial skew groupoid ring} $R \star_\af G$ is the set 
of all finite formal sums $\sum_{g \in \mor(G)} r_g \de_g$, 
where $r_g \in R_g$, with addition defined componentwise and  
multiplication determined by the rule 
\begin{equation}\label{multiplicationrule}
(r_g \delta_g) (r_h' \delta_h) = \alpha_g( \alpha_{g^{-1}}(r_g) r_h'  ) \delta_{gh},
\end{equation}
if $(g,h) \in G^2$, and $(r_g \delta_g) (r_h' \delta_h) = 0$, otherwise.
Since the ring structure of $R \star_{\alpha} G$
only depends on the choice of the rings $R_e$, for $e \in \ob(G)$, 
we may take $R$ to be {\it any} ring having these rings as ideals.
From this point of view, we may therefore assume that the following fourth axiom holds:
\begin{itemize}
\item[(P4)] $R = \oplus_{e \in \ob(G)} R_e$.
\end{itemize}
By adding this fourth axiom we get another advantage.
Namely, our definition of partial groupoid actions,
in the case when $G$ is a group, i.e. when $G$ has one object,
now coincides with the classical definition of partial group actions on rings.
We say that $\alpha$ is {\it unital} if every non-zero $R_g$ is unital, for $g\in \mor(G)$.
The action $\af$ is called {\it global}, 
if $\af_{gh}=\af_g\af_h$, for $(g,h) \in G^2$. 
It follows from \cite[Lemma 1.1(i)]{BP} that $\af$ is global, 
if and only if $R_g=R_{c(g)}$, for all $g\in \mor(G)$.
In that case, apart from the associativity requirement on $R$, $R \star_{\alpha} G$ coincides
with the definition of a {\it skew groupoid ring} given in \cite{lundstromoinert2010}.
Here is an outline of the article.

In Section \ref{preliminaries}, we
recall some notions and results from 
non-associative ring theory that we need 
in the sequel.
In Section \ref{partialskewgroupoidrings}, 
we show the following generalizations of Theorem \ref{ParkTheorem}.

\begin{teo}\label{ParkGeneral}
If $\alpha$ is a global unital action of a groupoid $G$ on a
non-associative ring $R$,
then the partial skew groupoid ring
$R \star_{\alpha} G$ is left (right) artinian, if and only if,
$R$ is left (right) artinian
and $R_g = \{ 0 \},$ for all but finitely many $g \in \mor(G)$.
\end{teo}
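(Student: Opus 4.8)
The plan is to reduce the groupoid statement to a direct sum decomposition over connected components and, within a component, to a skew group ring situation, after which a counting argument and the characterization of artinian rings from non-associative ring theory (recalled in Section~\ref{preliminaries}) finish the job. First I would observe that since $\af$ is global and unital, $R = \oplus_{e \in \ob(G)} R_e$ by (P4) and $R_g = R_{c(g)}$ for every $g \in \mor(G)$ by \cite[Lemma 1.1(i)]{BP}. Decomposing $\mor(G)$ into its connected components $\{G_i\}_{i \in I}$, one gets a ring decomposition $R \star_\af G = \oplus_{i \in I} (R^{(i)} \star_{\af^{(i)}} G_i)$, where $R^{(i)} = \oplus_{e \in \ob(G_i)} R_e$; a (one-sided) module over a direct sum of rings is artinian iff each summand is artinian and all but finitely many summands are zero. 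So it suffices to treat a single connected groupoid and to keep track of when a component contributes a nonzero ring.

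Next, within a connected groupoid $G_i$, I would fix an object $e_0$ and use the standard fact that a connected groupoid is isomorphic to the product of the isotropy group $H = G_i(e_0,e_0)$ with the "pair groupoid" on $\ob(G_i)$; equivalently, choosing morphisms $\tau_e : e_0 \to e$ for each object $e$, every morphism factors uniquely through $H$. Because $\af$ is global, the isomorphisms $\af_{\tau_e}$ identify all the $R_e$ with $R_{e_0}$ compatibly, and I would show $R^{(i)} \star_{\af^{(i)}} G_i$ is isomorphic (as a ring, or at least Morita equivalent in a way that preserves the one-sided chain conditions --- Morita equivalence preserves artinianity) to a matrix-type ring $M_{\ob(G_i)}(R_{e_0} *_{\af^{(i)}|_H} H)$ over the skew group ring of the isotropy group acting globally on $R_{e_0}$. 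When $\ob(G_i)$ is infinite this ring has an infinite set of orthogonal idempotents, hence is never one-sided artinian (and $R^{(i)}$ itself is then not artinian either, for the same reason), so such a component forces both sides of the equivalence to fail; when $\ob(G_i)$ is finite the matrix ring is artinian iff $R_{e_0} *_{\af^{(i)}|_H} H$ is.

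For the remaining reduction --- a group $H$ acting globally and unitally on a non-associative ring $T = R_{e_0}$ --- I would adapt Park's argument (Theorem~\ref{ParkTheorem}) to the non-associative unital setting: if $T *_\af H$ is left artinian, one shows $H$ must be finite, and conversely if $H$ is finite and $T$ is left artinian then $T *_\af H$ is a finitely generated left $T$-module and hence left artinian. The genuinely non-associative input here is exactly what Section~\ref{preliminaries} is designed to supply --- e.g. that for a unital (non-associative) ring the left artinian condition is still detected by descending chains of left ideals, that it passes to $T*_\af H$ viewed as a $T$-module when $H$ is finite, and that an infinite orthogonal family of idempotents obstructs it. Finally I would reassemble: $R \star_\af G$ is left artinian $\iff$ every component ring is left artinian and all but finitely many are zero $\iff$ each component has finite object set and finite isotropy group with $R_{e_0}$ left artinian, and all but finitely many $R_e$ vanish $\iff$ $R$ is left artinian and $R_g = \{0\}$ for all but finitely many $g$; the right-handed statement is symmetric.

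The main obstacle I expect is the bookkeeping in the second paragraph: making the identification of $R^{(i)} \star_{\af^{(i)}} G_i$ with a matrix ring over a group skew ring fully rigorous in the non-associative, possibly non-unital-on-some-$R_e$ situation, and verifying that the equivalence used really does transfer the left (and right) chain conditions in both directions. Getting the "all but finitely many $R_g = \{0\}$" condition to match up cleanly across the component decomposition --- noting that a component with all $R_e = 0$ contributes the zero ring and should be discarded --- also requires a little care, but is routine once the single-component case is in hand.
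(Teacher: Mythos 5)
The structural reduction in your first two paragraphs (connected components, then a matrix ring over the skew group ring of an isotropy group) is a legitimate and genuinely different route from the paper's, and the bookkeeping you flag as the main obstacle is indeed surmountable. The real gap lies elsewhere, at exactly the point where the difficulty of the theorem is concentrated: the assertion that ``if $T *_{\alpha} H$ is left artinian, one shows $H$ must be finite'' for a \emph{non-associative} unital ring $T$. This is the group case of the very theorem being proved, and ``adapting Park's argument'' is not a proof: Park's argument relies on associative structure theory (the Jacobson radical, semisimplicity of $R/J(R)$, Wedderburn theory), none of which is available verbatim for general non-associative rings. The items you list as the non-associative input supplied by Section~\ref{preliminaries} (detecting artinianity by descending chains, finite generation over $T$, infinite families of orthogonal idempotents) only cover the easy implication; they do not touch this direction.

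The paper's proof is organized precisely to avoid reproving Park's theorem non-associatively. It first shows $R$ is artinian (Proposition~\ref{ONLYIF1}), then passes to the quotient by the simplicial radical, where Proposition~\ref{semisimplicitySR} (a nontrivial non-associative result) makes each $\overline{R}_g$ semisimple; it then decomposes into simple components via an auxiliary groupoid (Proposition~\ref{ONLYIF2}), and for a simple unital $R_e$ it passes to the center $Z(R_e)$, which is a \emph{field} by Proposition~\ref{Zfield}. Only then is Park's theorem invoked, applied to the associative skew group ring $Z(R_e) \star_{\beta} G_e$ realized as a direct summand of $S$ as a one-sided module (Proposition~\ref{lemmaskewgroupoidring}). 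If you keep your decomposition, you still need this chain of reductions (or a substitute) for the isotropy group acting on $R_{e_0}$; as written, the key step is assumed rather than proved. A secondary issue: Morita equivalence is not an off-the-shelf tool for non-associative rings, and the fact that $M_n(T)$ artinian implies $T$ artinian is itself obtained in the paper as a corollary of the main theorem (Corollary~\ref{cor:genmatrixring}), so invoking it here risks circularity unless you give an independent argument, e.g.\ via the direct summand trick of Proposition~\ref{lemmasummand}.
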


\begin{teo}\label{ParkGeneralAlternative}
If $\alpha$ is a unital partial action of a groupoid $G$ 
on an alternative ring $R$, 
then the partial skew groupoid ring
$R \star_{\alpha} G$ is left (right) artinian, if and only if ,
$R$ is left (right) artinian
and $R_g = \{ 0 \},$ for all but finitely many $g \in \mor(G)$.
\end{teo}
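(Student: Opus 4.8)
The plan is to reduce the general unital partial action case to the global unital case handled in Theorem~\ref{ParkGeneral} via a globalization procedure, and then to transfer the artinian property across the resulting bridge. First I would invoke (or develop in Section~\ref{preliminaries}) the fact that a unital partial action $\alpha$ of $G$ on an alternative ring $R$ admits an \emph{enveloping action} (globalization): there is an alternative ring $T$, a global action $\beta$ of $G$ on $T$, and an embedding of $R$ as an ideal of $T$ such that $T$ is generated by the $\beta$-translates of $R$, with $R_g = R \cap \beta_g(R)$ for each $g \in \mor(G)$. The key structural consequence, established in the associative setting in \cite{DokuchaevExel05,BP} and which I expect carries over with the alternative hypothesis doing the work that associativity did there, is that the partial skew groupoid ring $R \star_\alpha G$ is isomorphic to a (non-unital, in general) subring of the global skew groupoid ring $T \star_\beta G$; more precisely $R \star_\alpha G \cong 1_R \cdot (T \star_\beta G) \cdot 1_R$ when $R$ is unital, realizing $R \star_\alpha G$ as a corner, and this corner is in fact a \emph{unital ideal}-type piece so that $T \star_\beta G$ and $R \star_\alpha G$ are Morita equivalent, or at least so that one is artinian iff the other is.

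Second, I would pin down the dictionary between the finiteness conditions on the two sides. On the global side, $T_g = T_{c(g)}$ for all $g$, and Theorem~\ref{ParkGeneral} tells us $T \star_\beta G$ is left (right) artinian iff $T$ is left (right) artinian and $T_g = \{0\}$ for all but finitely many $g \in \mor(G)$. I then need: (i) $T$ is artinian iff $R$ is artinian — this should follow because $R$ is a unital ideal of $T$ and $T$ is spanned by finitely many $\beta$-translates of $R$ once the support is finite, so $T$ is a finite sum of artinian $R$-bimodule pieces; and (ii) the support condition transfers, i.e. $T_g = \{0\}$ for all but finitely many $g$ iff $R_g = \{0\}$ for all but finitely many $g$ — here the nontrivial direction uses that $R_g = R \cap \beta_g(R) \neq \{0\}$ is forced whenever $T_{c(g)} \neq \{0\}$ on the relevant components once $R$ meets every nonzero $T_e$, together with the way the globalization packs the $R_g$'s into the $T_e$'s. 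Care is needed because $T$ itself is typically not unital even when $R$ is, so I would phrase (i) in terms of the ideal structure rather than quoting a theorem about unital $T$, or alternatively pass to the unital ring $T^{1}$ and track the extra summand.

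The main obstacle I anticipate is twofold. The first, more technical, difficulty is establishing the globalization theorem itself in the \emph{alternative, groupoid} setting: one must check that the candidate enveloping ring $T$ — built as a suitable submodule of a product of copies of $R$ indexed by $\mor(G)$ (or by $G$-orbits), with $\beta$ acting by index-shift — is genuinely alternative, and that the map $R \star_\alpha G \to T \star_\beta G$ is a ring homomorphism; the multiplication rule \eqref{multiplicationrule} involves the triple product $\alpha_g(\alpha_{g^{-1}}(r_g) r_h')$, and verifying that this matches the global product after embedding is exactly where the alternative identities (rather than full associativity) must be exploited, presumably via Artin's theorem that any two elements generate an associative subring. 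The second difficulty is the Morita/transfer step: since $T \star_\beta G$ need not be unital, I cannot simply cite a unital Morita-equivalence theorem, so I would instead argue directly that a descending chain of left ideals in $R \star_\alpha G$ lifts to one in $T \star_\beta G$ (and conversely, intersecting a chain in $T \star_\beta G$ with the corner), using that $1_R$ acts as a local unit and that the corner is "large" in the sense of containing enough of $T \star_\beta G$; the finite-support hypothesis is what makes $T \star_\beta G$ and its corner behave well enough for this to close. Once both are in place, the equivalence in the statement follows by chaining the equivalences: $R \star_\alpha G$ artinian $\iff$ $T \star_\beta G$ artinian $\iff$ ($T$ artinian and finite support) $\iff$ ($R$ artinian and finite support).
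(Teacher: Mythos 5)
Your strategy --- globalize $\alpha$ to a global action $\beta$ on an enveloping ring $T$ and transfer artinianity between $R \star_\alpha G$ and $T \star_\beta G$ so as to invoke Theorem~\ref{ParkGeneral} --- is precisely the route that the paper explicitly rules out in its introduction, and the obstruction you flag as a ``difficulty'' is fatal rather than merely technical. Even in the associative group case, the enveloping ring $T$ of a unital partial action need not be unital (\cite[Example 1.4]{FL}), and by Proposition~\ref{fintype} unitality of the rings $T_e$ is equivalent to $\alpha$ being of finite type. Without finite type there is no Morita equivalence between $R \star_\alpha G$ and $T \star_\beta G$ (see Corollary~\ref{art}), and, just as seriously, Theorem~\ref{ParkGeneral} cannot be applied to $T \star_\beta G$ at all, since it requires the global action to be \emph{unital}, i.e.\ each nonzero $T_g=T_{c(g)}$ to have an identity. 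The finiteness you would need in order to secure finite type (and hence the transfer) is essentially the conclusion $R_g=\{0\}$ for all but finitely many $g$ that you are trying to prove, so the argument is circular. On top of this, the globalization theorem \cite[Theorem 2.1]{BP} is only available for associative $R$; for an alternative ring the existence of an alternative enveloping ring $T$ is not established, and Artin's theorem does not obviously rescue the verification, since the products occurring in \eqref{multiplicationrule} involve three independent elements.

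The paper's actual proof avoids globalization entirely. The ``if'' direction is Proposition~\ref{IF}; for ``only if'', artinianity of $R$ is Proposition~\ref{ONLYIF1}, and for the finite-support claim one passes to the quotients $\overline{R}_g=R_g/\mathcal{J}(R_g)$ by the Zhevlakov radical. The hereditariness of $\mathcal{J}$ (Proposition~\ref{JIhereditary}) guarantees that the induced maps $\overline{\alpha}_g$ still form a partial action on $\overline{R}$, each $\overline{R}_g$ with $R_g\neq\{0\}$ is nonzero and semisimple by Proposition~\ref{semisimplicityJR} (this is where unitality of $\alpha$ and alternativity of $R$ are used), and $\overline{R}\star_{\overline{\alpha}}G$ is an artinian epimorphic image of $R\star_\alpha G$. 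The semisimple case is then settled by Proposition~\ref{ONLYIF2}, which replaces $G$ by a refined groupoid on which the action becomes global and applies Proposition~\ref{lemmaskewgroupoidring}. If you wish to salvage a globalization argument, you would first have to establish the finite-support conclusion by independent means, at which point the globalization is no longer needed.
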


A couple of remarks are needed here.
First, notice that the condition that $R_g = \{ 0 \}$ for all but
finitely many $g \in \mor(G)$ can not, in general, be replaced by the statement
that $\mor(G)$ is finite in
Theorem \ref{ParkGeneral}, 
nor in Theorem \ref{ParkGeneralAlternative}.
To see this,
let $G$ be any infinite groupoid and let $K$ be any 
left (or right) artinian non-associative ring, e.g. a field.
Fix $e \in \ob(G)$ and put $R_e = K$.
If $g \in \mor(G)$ and $g \neq e$, then put $R_g = \{ 0 \}$.
Let $\alpha_e = \id_K$ and for $g \in \mor(G)$ with
$g \neq e$, then let $\alpha_g : R_{g^{-1}} \rightarrow R_g$
be the zero map. Then $R \star_{\alpha} G = K$ which is 
left (or right) artinian even though $\mor(G)$ is infinite.

Secondly, it follows by \cite[Theorem 2.1]{BP}  that  any unital partial groupoid  $\alpha$ action on a unital ring $R$ admits a globa\-lization  $\beta$ on a ring $T$,  provided that $R$ is associative.  But according to \cite[Example 1.4]{FL} one can not guarantee that $T$ is unital, not even in the group case. Therefore, a Morita equivalence can in general not be used to deduce Theorem \ref{ParkGeneralAlternative} from Theorem \ref{ParkGeneral} (see Corollary \ref{art}).

In the following two sections  we give some applications of our main results.
Namely, in Section \ref{sec:AppMix} we generalize Connell's result
and provide a characterization of left (right) artinian \emph{groupoid rings} (see Theorem \ref{groupoidring}).
We also characterize left (right) artinian \emph{generalized matrix rings} (see Corollary \ref{cor:genmatrixring})
and show an analogue of Connell's result for \emph{partial group algebras} (Corollary \ref{cor:PGA})

In Section \ref{sec:LPA} we recall the definition
of a \emph{Leavitt path algebra}
and briefly explain how it can be viewed as a partial skew group ring, using \cite{GR}.
Thereafter, we apply our main results and provide a characterization
of artinian (and semisimple) Leavitt path algebras (see Theorem \ref{LPAthm}).

In Section \ref{sec:Morita},
we use globalization and Morita 
equivalence to deduce necessary and sufficient conditions
for the artinianity and noetherianity of partial skew groupoid rings,
as well as establishing two Maschke-type results (see Theorem~\ref{teo:Maschke} and Corollary~\ref{cor:Maschke}).

\section{Preliminaries on Rings and Modules}\label{preliminaries}

In this section, we recall some results from non-associative
ring theory that we need in the sequel.
Throughout this section, let $R$ and $S$ denote
non-associative rings with $R$ a subring of $S$.
Recall that the center of $R$, denoted by $Z(R)$, is
the set of elements in $R$ that commute and associate 
with all elements of $R$. In other words, an element
$x \in R$ belongs to $Z(R)$ precisely when for every
choice of $r,r' \in R$, we have that 
$xr = rx$, $(xr)r' = x(rr')$, $(rx)r' = r(xr')$ and $(rr')x = r(r'x)$. 

\begin{prop}\label{Zfield}
If $R$ is unital and simple, then $Z(R)$ is a field.
\end{prop}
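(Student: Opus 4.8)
The plan is to prove the two things that together say that $Z(R)$ is a field: that $Z(R)$ is a nonzero commutative, associative, unital ring, and that every nonzero $z \in Z(R)$ has a multiplicative inverse lying in $Z(R)$. The first is immediate from the definition: any $z \in Z(R)$ commutes and associates with all elements of $R$, in particular with all other central elements, so $Z(R)$ is a commutative associative subring; moreover $1_R \in Z(R)$ and $1_R \neq 0$, so $Z(R)$ is unital and nonzero. It therefore remains to produce, for a fixed nonzero $z \in Z(R)$, a multiplicative inverse of $z$ inside $Z(R)$.

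First I would show that $zR$ is a two-sided ideal of $R$. Closure under addition is distributivity, and for $r, s \in R$ the centrality of $z$ gives $(zr)s = z(rs) \in zR$ and $s(zr) = (sz)r = (zs)r = z(sr) \in zR$. Since $R$ is unital we have $z = z 1_R \in zR$, so $zR \neq \{0\}$; as $R$ is simple (and $R^2 = R \neq \{0\}$ because $R$ is unital), this forces $zR = R$. Hence there is $w \in R$ with $zw = 1_R$, and since $z$ commutes with $w$ also $wz = 1_R$. Along the way I would record the cancellation principle that will be used repeatedly: if $u \in R$ and $zu = 0$, then $u = 1_R u = (wz) u = w(zu) = 0$, the regrouping being permitted because $z$ is central.

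The substantive step is to check that $w \in Z(R)$. For commutativity, fix $r \in R$ and put $y = wr - rw$; pushing $z$ through the products (legitimate since $z$ is central) one computes $z(wr) = (zw)r = r$ and $z(rw) = (zr)w = (rz)w = r(zw) = r$, so $zy = 0$ and therefore $y = 0$ by cancellation. For the associativity identities $(wr)r' = w(rr')$ and $(rr')w = r(r'w)$ I would argue the same way: multiply the difference of the two sides by $z$, move $z$ all the way inside using the center axioms together with $zw = 1_R$ so that both sides collapse to $rr'$, and then cancel. The remaining identity $(rw)r' = r(wr')$ then follows by combining the commutativity of $w$ with the two identities just established. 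With $w$ central, $z$ is invertible in $Z(R)$, and hence $Z(R)$ is a field.

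I expect the only delicate part to be this last verification that $w$ is central: it is entirely elementary, but one must resist reassociating except through the central element $z$, and the ``multiply by $z$, simplify, cancel by $w$'' template is precisely what makes each of the commutativity and associativity checks go through.
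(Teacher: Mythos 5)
Your proof is correct. Note that the paper itself does not prove this proposition at all --- it simply cites \cite[Proposition 9]{NOR2015} --- so what you have done is supply a complete, self-contained elementary argument in its place. Your argument is the standard one for this fact and every step checks out: $zR$ is indeed a two-sided ideal (the regroupings $(zr)s=z(rs)$ and $s(zr)=(sz)r=z(sr)$ use exactly the nucleus axioms for $z$), simplicity forces $zR=R$, and the ``multiply by $z$, collapse using $zw=1_R$, then cancel'' template does verify each of the four center axioms for $w$; in particular your derivation of $(rw)r'=r(wr')$ from the commutativity of $w$ together with the identities $(wr)r'=w(rr')$ and $(rr')w=r(r'w)$ is valid. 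Two very minor remarks: the parenthetical appeal to $R^2=R$ is unnecessary, since the paper's definition of simple only requires that $\{0\}$ and $R$ be the only ideals; and the closure of $Z(R)$ under multiplication (needed for it to be a subring at all) deserves at least a sentence of verification, though it is a routine consequence of the same four axioms. Neither point affects the correctness of the proof.
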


\begin{proof}
See e.g. \cite[Proposition 9]{NOR2015}.
\end{proof}

By a \emph{left $R$-module} we mean an additive group $M$
equipped with a biadditive map 
$R \times M \ni (r,m) \mapsto rm \in M$.
In that case, if $R$ is unital and $1_R m = m$, for $m \in M$,
then we say that $M$ is \emph{unital as a left $R$-module}.
By a \emph{left $R$-submodule} of $M$ we mean an additive subgroup $N$
of $M$ such that if $n \in N$ and $r \in R$, then $rn \in N$.
Recall that $M$ is called \emph{artinian} if it satisfies the 
descending chain condition on its poset of submodules.
The ring $R$ is called \emph{left artinian} if it is artinian
as a left module over itself.
The concepts of right module and right artinian ring 
are defined analogously.
If the ring $R$ is both left and right artinian, then it is said to be \emph{artinian}.

\begin{prop}\label{subringartinian} 
Suppose that $M$ is a left (right) $S$-module.
If $M$ is artinian as a left (right) $R$-module, 
then $M$ is artinian as a left (right) $S$-module.
\end{prop}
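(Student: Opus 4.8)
The plan is to observe that restriction of scalars from $S$ to $R$ can only enlarge the collection of submodules of $M$, so that the descending chain condition for $R$-submodules immediately yields the descending chain condition for $S$-submodules. No serious work is involved; the proof is purely formal.

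Concretely, I would first verify the key containment: every left $S$-submodule $N$ of $M$ is also a left $R$-submodule of $M$. Indeed, $N$ is by definition an additive subgroup of $M$, and since $R$ is a subring of $S$, for any $r \in R$ we have $r \in S$, whence $rn \in N$ for all $n \in N$. Note that this argument uses only the biadditive structure of the module action, so the possible non-associativity of $R$ and of $S$ plays no role whatsoever. Consequently the poset of left $S$-submodules of $M$ is a sub-poset of the poset of left $R$-submodules of $M$.

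Next, given any descending chain $N_1 \supseteq N_2 \supseteq N_3 \supseteq \cdots$ of left $S$-submodules of $M$, the previous step shows that this is also a descending chain of left $R$-submodules of $M$. Since $M$ is artinian as a left $R$-module, the chain stabilizes: there is $n_0$ with $N_n = N_{n_0}$ for all $n \geq n_0$. Hence $M$ is artinian as a left $S$-module. The right-module statement follows in exactly the same way, replacing left submodules by right submodules throughout.

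The main ``obstacle'' is that there is essentially none: the only point worth double-checking is that the notion of submodule in the non-associative setting — an additive subgroup closed under the scalar action — behaves correctly under restriction of scalars, and it manifestly does. I would therefore present the argument tersely, in two or three sentences.
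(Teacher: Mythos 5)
Your argument is correct and is exactly the paper's: the proof there is the one-line observation that $R\subseteq S$, which you have simply spelled out via the inclusion of the poset of $S$-submodules into the poset of $R$-submodules. No issues.
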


\begin{proof}
This follows immediately from the fact that $R\subseteq S$.
\end{proof}

\begin{prop}\label{lemmasummand}
Suppose that $R$ is a direct summand of the left (right) $R$-module $S$.
Then, for any right (left) ideal $I$ of $R$,
the relation $IS \cap R = IR $ ($SI \cap R = RI$) holds.
In particular, if $S$ is right/left artinian (noetherian) 
and $R$ is unital, then $R$ is right/left artinian (noetherian).
\end{prop}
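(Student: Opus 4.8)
The plan is to convert the direct-summand hypothesis into a module retraction and use it to recover a right ideal of $R$ from its extension to $S$. First I would fix a decomposition $S = R \oplus C$ of $S$ as a left $R$-module and let $\pi \colon S \to R$ be the associated projection; it is a homomorphism of left $R$-modules which restricts to the identity on $R$. For the left-hand relation, fix a right ideal $I$ of $R$. The inclusion $IR \subseteq IS \cap R$ is immediate, since $IR \subseteq I \subseteq R \subseteq S$. For the reverse inclusion I would take $x \in IS \cap R$, write $x = \sum_k i_k s_k$ with $i_k \in I$ and $s_k \in S$, and apply $\pi$: as $\pi$ is left $R$-linear and each $i_k$ lies in $R$,
\[
x = \pi(x) = \sum_k i_k\, \pi(s_k) \in IR,
\]
because every $\pi(s_k)$ lies in $R$ and $I$ is a right ideal of $R$. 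Hence $IS \cap R = IR$. The parenthetical relation $SI \cap R = RI$ is proved the same way with left and right interchanged, via the right $R$-module retraction of $S$ onto $R$. No associativity is used so far.

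For the final clause I would assume in addition that $R$ is unital; then $IR = I$ for every right ideal $I$ of $R$, so the first part yields $IS \cap R = I$. I would argue the right-artinian case, the left-artinian case being symmetric and the noetherian cases being obtained verbatim with ascending chains in place of descending ones. Given a descending chain $I_1 \supseteq I_2 \supseteq \cdots$ of right ideals of $R$, pass to the right ideal $J_n$ of $S$ generated by $I_n$. The $J_n$ form a descending chain of right ideals of $S$, which stabilises since $S$ is right artinian, say $J_n = J_N$ for all $n \ge N$. Since $I_n \subseteq J_n \cap R$ while $J_n \cap R = I_n$, the chain $(I_n)$ stabilises at $N$ as well, so $R$ is right artinian.

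I do not expect a serious obstacle: the content is the retraction argument of the first paragraph, and the chain transfer is then formal. The one place that needs a word of care — and the only place associativity of $S$ would enter — is the identity $J_n \cap R = I_n$. If $S$ is associative the right ideal $J_n$ generated by $I_n$ equals $I_n S$, and the identity is precisely the first part; in full non-associative generality one must instead argue directly that the right ideal of $S$ generated by $I_n$ meets $R$ in no more than $I_n$, still using the retraction $\pi$ and that $\pi$ fixes $R$ pointwise. This is the step I would write out most carefully; everything else is standard manipulation of chain conditions.
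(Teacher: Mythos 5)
Your proof of the displayed relation $IS \cap R = IR$ is correct and is, in substance, the paper's own argument: the paper writes $IS = I(R\oplus T) = IR \oplus IT$ and intersects with $R$, which is exactly your computation $\pi\bigl(\sum_k i_k s_k\bigr) = \sum_k i_k \pi(s_k)$ phrased through the left $R$-linear projection $\pi$ onto the summand $R$. The bookkeeping of sides (left-module decomposition paired with right ideals and right artinianity) also matches the statement.

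The soft spot is the one you flagged yourself, and it is a genuine gap rather than a step that merely needs to be ``written out carefully''. For non-associative $S$, the right ideal $J_n$ of $S$ generated by $I_n$ is obtained by iterating $N \mapsto N + NS$, so it contains iterated products such as $(is)s'$ with $i \in I_n$ and $s,s' \in S$. The retraction $\pi$ is only left $R$-linear: it gives $\pi(is) = i\pi(s)$, but it says nothing about $\pi\bigl((is)s'\bigr)$, because the left factor $is$ of that product need not lie in $R$. Hence the identity $J_n \cap R = I_n$ does not follow from the first part, and it is in fact false in general: taking $R = k\times k$ and $S = R \oplus kv$ with $(1,0)v = v$ and $v\cdot v=(0,1)$ gives a non-associative ring in which $R$ is a left $R$-module summand, yet the right ideal of $S$ generated by $I = k\times 0$ meets $R$ in all of $R$. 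So your chain-transfer argument is complete only when $S$ is associative, where $J_n = I_nS$ and the first part applies verbatim. In fairness, the paper's own proof is silent at exactly this point --- it asserts only that the last part ``follows'' from $IS\cap R = I$ --- so you have located a step the paper also leaves unjustified in its stated non-associative generality; but as written, your proof of the final clause is incomplete, and the proposed repair via $\pi$ will not close it.
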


\begin{proof}
Let $T$ be a left (right) $R$-module such that
$S = R \oplus T$ as left (right) $R$-modules. 
Let $I$ be a right (left) ideal of $R$. Then $IS \cap R = 
[ I(R \oplus T) ] \cap R = 
[ IR \oplus IT ] \cap R = 
IR$, $(SI \cap R = RI)$. In particular, if $R$ is unital,
then we get $IR=I$ ($RI=I$) from which the last part follows.
\end{proof}

\begin{prop}\label{exactsequence}
Let $R$ be a unital ring.
Suppose that $M$ is a left (right) $R$-module
and that $N$ is a left (right) $R$-submodule of $M$.
Then $M$ is artinian (noetherian), if and only if, 
$N$ and $M/N$ are artinian (noetherian).
\end{prop}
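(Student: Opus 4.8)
The plan is to prove the standard "two out of three" result for artinian/noetherian modules via short exact sequences, treating only the left-module case (the right case being entirely symmetric) and treating only artinianity (noetherianity being dual, replacing descending chains by ascending chains throughout). Since $R$ is unital here, the category of unital left $R$-modules behaves well, but in fact unitality is only needed to guarantee that the quotient and sub carry sensible module structures; the chain-condition arguments themselves are purely lattice-theoretic and do not use it.

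First I would establish the easy direction: assume $M$ is artinian. Any descending chain of submodules of $N$ is in particular a descending chain of submodules of $M$, hence stabilizes; so $N$ is artinian. For $M/N$, I would invoke the correspondence theorem: submodules of $M/N$ correspond bijectively and order-preservingly to submodules of $M$ containing $N$, so a descending chain in $M/N$ pulls back to a descending chain in $M$, which stabilizes, forcing the original chain to stabilize. Hence $M/N$ is artinian.

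For the converse, assume both $N$ and $M/N$ are artinian, and let $M_1 \supseteq M_2 \supseteq M_3 \supseteq \cdots$ be a descending chain of submodules of $M$. I would form the two induced chains $M_i \cap N$ in $N$ and $(M_i + N)/N$ in $M/N$. Both stabilize by hypothesis, so there is an index $k$ beyond which both $M_i \cap N$ and $M_i + N$ are constant. The key lemma is then that if $A \supseteq B$ are submodules of $M$ with $A \cap N = B \cap N$ and $A + N = B + N$, then $A = B$: given $a \in A$, write $a = b + n$ with $b \in B$, $n \in N$ using $A+N = B+N$ applied to $a \in A \subseteq A+N = B+N$; then $n = a - b \in A \cap N = B \cap N \subseteq B$, so $a = b + n \in B$. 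Applying this with $A = M_k$ and $B = M_i$ for $i \ge k$ shows $M_i = M_k$ for all $i \ge k$, so the chain stabilizes and $M$ is artinian.

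The only genuine subtlety — and the step I would flag as needing a word of care rather than being a real obstacle — is the modular-law-style manipulation in the key lemma; it is tempting to apply Dedekind's modular law directly, but since one only needs the one inclusion $A \subseteq B$ and $B \subseteq A$ is given, the elementwise argument above is cleanest and avoids any fuss. Everything else is a routine transcription of the correspondence theorem and the definition of the descending chain condition, so I would present the proof compactly, perhaps citing a standard reference such as the treatment of exact sequences and chain conditions in any module-theory text for the details, and only spelling out the key lemma explicitly.
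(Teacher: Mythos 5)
Your proposal is correct. The paper itself offers no argument for this proposition; it simply cites Rowen's \emph{Ring Theory} (Proposition 0.2.19), and the proof you give is precisely the standard one that such a reference contains: the easy direction via the correspondence theorem, and the converse via the chains $M_i \cap N$ and $(M_i+N)/N$ together with the key lemma that $B \subseteq A$, $A \cap N = B \cap N$ and $A + N = B + N$ force $A = B$. Your elementwise verification of that lemma is sound ($a = b + n$ gives $n = a - b \in A \cap N = B \cap N \subseteq B$, hence $a \in B$), and your remark that unitality plays no role in the lattice-theoretic part of the argument is accurate. The only difference from the paper is that you supply the details it outsources.
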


\begin{proof}
See \cite[Proposition 0.2.19]{rowen1998}.
\end{proof}

\begin{prop}\label{directsumartinian}
Let $R$ be a unital ring.
Suppose that $M$ is a left (right) $R$-module.
If $M_1,\ldots,M_n$ are left (right) $R$-submodules of $M$
such that $M = M_1 \oplus \cdots \oplus M_n$,
then $M$ is artinian (noetherian) if and only if for each 
$i \in \{ 1,\ldots,n \}$, $M_i$ is artinian (noetherian).
\end{prop}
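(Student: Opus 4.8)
The plan is to argue by induction on $n$, using Proposition \ref{exactsequence} as the engine. The base case $n=1$ is immediate, since then $M = M_1$ and there is nothing to prove. For the inductive step, I would assume the statement for direct sums of $n-1$ submodules and suppose that $M = M_1 \oplus \cdots \oplus M_n$. Set $N = M_1$. Then $N$ is a left (right) $R$-submodule of $M$, and the canonical projection induces an isomorphism of left (right) $R$-modules $M/N \cong M_2 \oplus \cdots \oplus M_n$.

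First I would invoke Proposition \ref{exactsequence}, which applies since $R$ is unital, to conclude that $M$ is artinian (noetherian) if and only if both $N = M_1$ and $M/N \cong M_2 \oplus \cdots \oplus M_n$ are artinian (noetherian). Next I would apply the induction hypothesis to $M_2 \oplus \cdots \oplus M_n$, a direct sum of $n-1$ submodules, to obtain that it is artinian (noetherian) if and only if each of $M_2, \ldots, M_n$ is artinian (noetherian). Chaining these two equivalences yields that $M$ is artinian (noetherian) if and only if each $M_i$, for $i \in \{1,\ldots,n\}$, is artinian (noetherian), which is the assertion.

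I do not expect any real obstacle here. The only point that deserves a word of care is the identification of $M/M_1$ with $M_2 \oplus \cdots \oplus M_n$ as $R$-modules, which is immediate from the definition of the internal direct sum. The hypothesis that $R$ is unital is used solely to license the appeal to Proposition \ref{exactsequence}; no separate use of unitality is required. Finally, the noetherian case is handled by the same argument verbatim, reading ``ascending'' for ``descending'' throughout, since Proposition \ref{exactsequence} covers both conditions simultaneously.
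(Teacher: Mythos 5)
Your proof is correct and follows essentially the same route as the paper's: induction on the number of summands, applying Proposition \ref{exactsequence} to the submodule $M_1$ and the quotient $M/M_1 \cong M_2 \oplus \cdots \oplus M_n$. The only cosmetic difference is that you extract both directions from the biconditional in one pass, whereas the paper dispatches the ``only if'' direction separately as clear.
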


\begin{proof}
We can use Proposition \ref{exactsequence}.
The ``only if'' statement is clear.
For the ``if'' statement, consider the case $i=2$ and
notice that $M_1$ is a left (right) $R$-submodule of $M_1 \oplus M_2$ and 
that $(M_1 \oplus M_2)/M_1 \cong M_2$.
The general case now follows by induction over $i$.
\end{proof}

The term {\it ideal}
refers to \emph{two-sided ideal},
unless otherwise stated.
The ring $R$ is called \emph{simple} if $\{ 0 \}$ and $R$
are the only ideals of $R$.
An ideal $I$ of $R$ is called \emph{maximal} if 
$I \subsetneq R$ and for every ideal $J$ of $R$
with $I \subseteq J \subsetneq R$, the relation $I = J$ holds.
Note that an ideal $I$ of $R$ is maximal if and only if $R/I$ is simple.
The ring $R$ is called {\it semisimple} if it is a finite direct sum
of simple rings.
Following \cite{C},
the intersection of all maximal ideals of $R$,
denoted by ${\mathcal S}(R)$,
will be called the {\it simplicial radical} of $R$.

\begin{prop}\label{semisimplicitySR}
If $R$ is unital and artinian, then 
$R / {\mathcal S}(R)$ is semisimple.
\end{prop}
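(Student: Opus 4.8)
The plan is to follow the classical Artin–Wedderburn-style argument, adapted to the non-associative setting via Connell's simplicial radical $\mathcal{S}(R)$. First I would reduce the statement to showing that $R/\mathcal{S}(R)$ is isomorphic to a finite direct sum of simple rings. The key observation is that $R/\mathcal{S}(R)$ is, by construction, a subdirect product of the simple rings $\{R/M : M \text{ maximal ideal of } R\}$, since $\mathcal{S}(R) = \bigcap M$. So the natural map $R/\mathcal{S}(R) \hookrightarrow \prod_M R/M$ is injective; the task is to show that only finitely many factors are really needed and that the image is in fact the full finite product.

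The main technical step is a descending-chain argument on finite intersections of maximal ideals, which is where the artinian hypothesis is used. Consider the family of all finite intersections $M_1 \cap \cdots \cap M_k$ of maximal ideals; these are ideals of $R$, hence (since $R$ is left artinian, say) this family has a minimal element, say $J = M_1 \cap \cdots \cap M_n$. I would then argue that $J = \mathcal{S}(R)$: for any maximal ideal $M$, minimality of $J$ forces $J \cap M = J$, i.e. $J \subseteq M$, and hence $J \subseteq \bigcap_M M = \mathcal{S}(R)$; the reverse inclusion is immediate. Thus $\mathcal{S}(R) = M_1 \cap \cdots \cap M_n$ is a finite intersection. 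Discarding redundant terms, we may assume the intersection is irredundant.

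Finally I would invoke a Chinese-Remainder-type argument: the map $R \to \bigoplus_{i=1}^n R/M_i$ has kernel $\mathcal{S}(R)$, so it suffices to check surjectivity. Here one uses that the $M_i$ are distinct maximal ideals: for each $i$, irredundancy gives $\bigcap_{j \neq i} M_j \not\subseteq M_i$, and since $M_i$ is maximal and $\bigcap_{j\neq i}M_j$ is an ideal, maximality yields $M_i + \bigcap_{j \neq i} M_j = R$. A standard interpolation argument then produces, for each $i$, an element congruent to $1_R$ modulo $M_i$ and to $0$ modulo every $M_j$ with $j \neq i$ (this is where unitality of $R$ enters), establishing surjectivity. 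Hence $R/\mathcal{S}(R) \cong \bigoplus_{i=1}^n R/M_i$, a finite direct sum of simple rings, i.e. semisimple.

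The main obstacle I anticipate is verifying that the purely set-theoretic/ideal-theoretic manipulations (minimal elements of the family of finite intersections, irredundancy, the comaximality step $M_i + \bigcap_{j\neq i} M_j = R$) go through without any associativity assumption on $R$ — but since these only use that ideals are closed under the ring operations and that maximality is characterized by simplicity of the quotient (already noted in the excerpt), the non-associativity should cause no trouble; one just has to be careful that "$R/I$ simple" is the right notion of maximality, which the paper has already fixed.
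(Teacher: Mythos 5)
Your argument is correct: the descending-chain argument (valid because two-sided ideals are in particular left ideals, so left artinianity gives a minimal finite intersection) shows that $\mathcal{S}(R)=M_1\cap\cdots\cap M_n$ is a finite irredundant intersection of maximal ideals, and the Chinese-remainder step --- which uses only distributivity, the two-sided ideal property and unitality, never associativity --- yields $R/\mathcal{S}(R)\cong\bigoplus_{i=1}^{n}R/M_i$, a finite direct sum of simple rings. The paper gives no argument of its own here, citing only \cite[Theorem 1.7 on p.\ 6]{elduque1994}, and your proof is essentially the standard one behind that reference, so the only difference is that you have made the cited result self-contained.
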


\begin{proof}
See \cite[Theorem 1.7 on p. 6]{elduque1994}.
\end{proof}

Recall that $R$ is called {\it alternative} if for all $x,y \in R$,
the relations $x^2 y = x(xy)$ and $x y^2 = (xy)y$ hold.
If $R$ is alternative one may define a radical of $R$, denoted
${\mathcal J}(R)$, similar to the Jacobson radical from the associative situation.
In the alternative setting ${\mathcal J}(R)$ is often called
the Zhevlakov radical of $R$ (see \cite[p. 210]{zhevlakov1982}).
In that case, ${\mathcal J}$ is {\it hereditary} in the 
following sense.

\begin{prop}\label{JIhereditary}
If $R$ is artinian and $I$ is an ideal of $R$,
then ${\mathcal J}(I) = I \cap {\mathcal J}(R)$.
\end{prop}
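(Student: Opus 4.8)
The plan is to prove the two inclusions $\mathcal{J}(I) \subseteq I \cap \mathcal{J}(R)$ and $I \cap \mathcal{J}(R) \subseteq \mathcal{J}(I)$ separately, exploiting the concrete description of the Zhevlakov radical available in the artinian alternative setting: for an artinian alternative ring, $\mathcal{J}$ of that ring is nilpotent and coincides with the unique largest nilpotent ideal (this is the alternative analogue of the classical fact for artinian associative rings, and can be extracted from \cite{zhevlakov1982}; it is also the structural content lurking behind Proposition \ref{semisimplicitySR}). First I would record that $I$, being an ideal of the artinian ring $R$, is itself artinian, so that this same description applies to $I$; this can be seen by combining the fact that $I$ is artinian as a left (right) $R$-module with an Andrunakievich-type argument that transports a descending chain of two-sided ideals of $I$ to one of $R$.

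For the inclusion $I \cap \mathcal{J}(R) \subseteq \mathcal{J}(I)$ I would argue as follows. Since $\mathcal{J}(R)$ is nilpotent, say $\mathcal{J}(R)^{k} = \{0\}$, the set $I \cap \mathcal{J}(R)$ is an ideal of $I$ satisfying $(I \cap \mathcal{J}(R))^{k} \subseteq \mathcal{J}(R)^{k} = \{0\}$, hence it is a nilpotent ideal of $I$. As $\mathcal{J}(I)$ is the largest nilpotent ideal of $I$, we get $I \cap \mathcal{J}(R) \subseteq \mathcal{J}(I)$.

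For the reverse inclusion $\mathcal{J}(I) \subseteq I \cap \mathcal{J}(R)$ it suffices, since trivially $\mathcal{J}(I) \subseteq I$, to show that $\mathcal{J}(I) \subseteq \mathcal{J}(R)$. The subtle point is that a nilpotent ideal of $I$ need not a priori be an ideal of $R$; here I would invoke the Andrunakievich lemma for alternative rings, which guarantees that the ideal $B^{\ast}$ of $R$ generated by a nilpotent ideal $B$ of $I$ satisfies $(B^{\ast})^{m} \subseteq B$ for a suitable exponent $m$, and is therefore again nilpotent. Applying this with $B = \mathcal{J}(I)$ produces a nilpotent ideal $B^{\ast}$ of $R$ with $\mathcal{J}(I) \subseteq B^{\ast}$; since $\mathcal{J}(R)$ is the largest nilpotent ideal of $R$, we conclude $\mathcal{J}(I) \subseteq B^{\ast} \subseteq \mathcal{J}(R)$, as desired. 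Combining the two inclusions yields $\mathcal{J}(I) = I \cap \mathcal{J}(R)$.

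I expect the main obstacle to be the non-associative bookkeeping rather than the shape of the argument: namely, verifying carefully that the structure theory invoked above (nilpotence of $\mathcal{J}$ of an artinian alternative ring, its identification with the largest nilpotent ideal, the passage of artinianity to ideals, and the Andrunakievich-type lemma) is genuinely valid for alternative — not merely associative — rings. All of these are classical and can be quoted from \cite{zhevlakov1982}; accordingly, a shorter route would be to appeal directly to the known heredity of the Zhevlakov radical for alternative rings, the argument above simply making explicit why the artinianity hypothesis already suffices.
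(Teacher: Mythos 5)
The paper does not give an argument here at all: it simply cites the heredity theorem for the Zhevlakov radical, \cite[Theorem 3 on p.~204]{zhevlakov1982}, which is precisely the ``shorter route'' you mention in your final sentence (and which needs no artinian hypothesis). Your first inclusion is correct and is where artinianity genuinely enters: $\mathcal{J}(R)$ is nilpotent, so $I\cap\mathcal{J}(R)$ is a nilpotent, hence quasi-regular, ideal of $I$ and therefore lies in $\mathcal{J}(I)$; note that this half requires nothing about $I$ being artinian as a ring.

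The reverse inclusion, however, rests on two steps that fail or are unavailable. First, your preliminary claim that an ideal $I$ of an artinian ring is itself an artinian ring is false already for associative unital rings: take $R$ to be the upper triangular $2\times 2$ matrices over $\mathbb{Q}$ and $I$ the strictly upper triangular matrices; then $I^{2}=\{0\}$, so every additive subgroup of $I$ is an ideal of the ring $I$, and $\mathbb{Q}\supset\mathbb{Z}\supset 2\mathbb{Z}\supset\cdots$ yields an infinite strictly descending chain. (Artinianity of $I$ as an $R$-module controls $R$-submodules, not one-sided ideals of the ring $I$; and your proposed transport argument moves \emph{two-sided} ideals of $I$, which is the wrong chain condition for the nilpotence theorem in any case.) Consequently you have no grounds for asserting that $\mathcal{J}(I)$ is nilpotent. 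Second, the Andrunakievich lemma is not a classical quotable fact in the alternative setting: whether the ideal $B^{*}$ of $R$ generated by an ideal $B$ of $I$ satisfies $(B^{*})^{m}\subseteq B$ is a long-standing open problem for alternative rings and is not proved in \cite{zhevlakov1982}. The containment $\mathcal{J}(I)\subseteq I\cap\mathcal{J}(R)$ is instead obtained, as in Zhevlakov's proof, by showing that $\mathcal{J}(I)$ is already an ideal of $R$ and that its elements are quasi-regular in $R$ (a quasi-inverse computed in $I$ serves in $R$), whence $\mathcal{J}(I)\subseteq\mathcal{J}(R)$; no nilpotence and no Andrunakievich-type lemma is needed. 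As written, your proposal establishes only one of the two inclusions.
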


\begin{proof}
See \cite[Theorem 3 on p. 204]{zhevlakov1982}.
\end{proof}

\begin{prop}\label{semisimplicityJR}
If $R$ is a unital alternative left (right) artinian ring,
then ${\mathcal S}(R) = {\mathcal J}(R)$ and
$R / {\mathcal J}(R)$ is semisimple.
\end{prop}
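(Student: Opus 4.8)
The plan is to prove the two inclusions ${\mathcal J}(R)\subseteq {\mathcal S}(R)$ and ${\mathcal S}(R)\subseteq{\mathcal J}(R)$ separately, obtaining the semisimplicity of $R/{\mathcal J}(R)$ along the way. For ${\mathcal J}(R)\subseteq{\mathcal S}(R)$, I would fix a maximal ideal $M$ of $R$. As recalled just before Proposition~\ref{semisimplicitySR}, the quotient $R/M$ is simple, and it is unital with identity $1_R+M\neq 0$, since $1_R\notin M$. Because the Zhevlakov radical of a unital ring is always a proper ideal (the identity is not quasi-regular), ${\mathcal J}(R/M)$ is a proper ideal of the simple ring $R/M$, hence ${\mathcal J}(R/M)=\{0\}$. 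The canonical image of ${\mathcal J}(R)$ in any quotient of $R$ is contained in the Zhevlakov radical of that quotient, so the image of ${\mathcal J}(R)$ in $R/M$ is $\{0\}$, i.e.\ ${\mathcal J}(R)\subseteq M$. Intersecting over all maximal ideals $M$ gives ${\mathcal J}(R)\subseteq{\mathcal S}(R)$; note that this step uses only that $R$ is unital and alternative.

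For the reverse inclusion, I would first note that $R/{\mathcal J}(R)$ is left (right) artinian, being a homomorphic image of the left (right) artinian module $R$, and that it is ${\mathcal J}$-semisimple. By the Wedderburn--Artin type structure theory for alternative artinian rings (see \cite{zhevlakov1982}; cf.\ \cite{elduque1994}), $R/{\mathcal J}(R)$ is then semisimple, say $R/{\mathcal J}(R)=A_1\oplus\cdots\oplus A_n$ with each $A_i$ a non-zero, hence unital, simple ring (the components of $1_R+{\mathcal J}(R)$ along the summands are the respective identities). Consequently the ideals of $R/{\mathcal J}(R)$ are exactly the direct sums of subsets of the $A_i$, so its maximal ideals are precisely the $\bigoplus_{j\neq i}A_j$ and ${\mathcal S}(R/{\mathcal J}(R))=\{0\}$. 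By the correspondence theorem the intersection of those maximal ideals of $R$ that contain ${\mathcal J}(R)$ equals ${\mathcal J}(R)$; since ${\mathcal S}(R)$ is the intersection over a (possibly larger) family of maximal ideals, this yields ${\mathcal S}(R)\subseteq{\mathcal J}(R)$.

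Combining the two inclusions gives ${\mathcal S}(R)={\mathcal J}(R)$, and the semisimplicity of $R/{\mathcal J}(R)=R/{\mathcal S}(R)$ was already established in the second step. The only substantial input is the structure theorem for alternative, one-sidedly artinian rings invoked above; everything else is routine manipulation with radicals and maximal ideals. The point that requires a little care is precisely that only a one-sided chain condition is assumed, so Proposition~\ref{semisimplicitySR} (which presupposes two-sided artinianity) cannot be quoted directly, and the semisimplicity of $R/{\mathcal J}(R)$ must instead be extracted from the alternative structure theory; the heredity of ${\mathcal J}$ recorded in Proposition~\ref{JIhereditary} is what makes the radical behave well under passage to ideals and quotients in that argument.
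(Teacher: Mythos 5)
Your argument is correct, but it takes a genuinely different route from the paper. The paper's own proof is purely a citation: it quotes \cite[Proposition 1.2(ii) on p.~94]{elduque1994} for the equality ${\mathcal S}(R)={\mathcal J}(R)$ and then gets the semisimplicity of $R/{\mathcal J}(R)$ from Proposition~\ref{semisimplicitySR} (or from the Corollary on p.~250 of \cite{zhevlakov1982}). You instead derive the equality yourself: the inclusion ${\mathcal J}(R)\subseteq{\mathcal S}(R)$ from the facts that the Zhevlakov radical is proper in a unital ring and that a surjection carries it into the radical of the image, and the reverse inclusion from the Wedderburn--Artin-type structure theorem for ${\mathcal J}$-semisimple alternative artinian rings together with the (correctly justified) description of the ideals of a finite direct sum of unital simple rings. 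The net effect is that your only external input is the structure theorem itself --- essentially the same Zhevlakov corollary the paper offers as an alternative citation --- so your proof is more self-contained with respect to the equality of the two radicals. You also rightly observe that Proposition~\ref{semisimplicitySR}, as stated, assumes two-sided artinianity while the present hypothesis is one-sided; the paper's proof passes over this point silently, and your workaround (extracting semisimplicity from the alternative structure theory under a one-sided chain condition) is the honest way to handle it. One small inaccuracy in your closing commentary: Proposition~\ref{JIhereditary} concerns the restriction of ${\mathcal J}$ to \emph{ideals} (${\mathcal J}(I)=I\cap{\mathcal J}(R)$) and plays no role in your argument; the property you actually use is that ${\mathcal J}$ is a radical in the Kurosh--Amitsur sense, so that its image under a quotient map lands in the radical of the quotient.
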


\begin{proof}
For the equality ${\mathcal S}(R) = {\mathcal J}(R)$,
see \cite[Proposition 1.2(ii) on p. 94]{elduque1994}.
The semisimplicity of $R / {\mathcal J}(R)$ now follows 
from Proposition \ref{semisimplicitySR}
(or \cite[Corollary on p. 250]{zhevlakov1982}).
\end{proof}

\begin{rem}
The proofs of Propositions \ref{semisimplicitySR}-\ref{semisimplicityJR}, as they appear in \cite{elduque1994} and \cite{zhevlakov1982}, presuppose that $R$ is an algebra over a field. However, it is clear from those proofs that the algebra structure of $R$ is not needed, and therefore one can suppose that $R$ is only a ring.
\end{rem}

\section{Partial Skew Groupoid Rings}\label{partialskewgroupoidrings}

In this section, we show Theorem \ref{ParkGeneral} and Theorem \ref{ParkGeneralAlternative}. 
We assume that 
$G$ is a groupoid, that $R$ is a non-associative ring and
that $\alpha = \{ \alpha_g : R_{g^{-1}} \rightarrow R_g \}_{g \in \mor(G)}$
is a partial action of $G$ on $R$.

\begin{prop}\label{IF}
If $R$ is left (right) artinian, and $R_g = \{ 0 \}$
for all but finitely many $g \in \mor(G)$,
then $R \star_{\alpha} G$ is left (right) artinian.
\end{prop}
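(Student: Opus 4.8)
The plan is to show that $S := R \star_{\alpha} G$ is artinian as a left module over the subring $R$, and then to invoke Proposition~\ref{subringartinian}. Write $F := \{ g \in \mor(G) : R_g \neq \{0\} \}$, which is finite by hypothesis, so that $S = \bigoplus_{g \in F} R_g \delta_g$ as an abelian group. Using (P4) and the fact that two distinct identity morphisms are never composable, one checks that $R = \bigoplus_{e \in \ob(G)} R_e$ is, via $r = \sum_e r_e \mapsto \sum_e r_e \delta_e$, a subring of $S$; here one only needs $\alpha_e = \id_{R_e}$ from (P1). We henceforth identify $R$ with this subring.

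First I would verify that, for every $g \in \mor(G)$, the ideal $R_g$ of $R_{c(g)}$ is in fact a left $R$-submodule of ${}_R R$. Indeed, for $x = \sum_e x_e \in R$ and $y \in R_g \subseteq R_{c(g)}$ one has $x_e y \in R_e R_{c(g)} \subseteq R_e \cap R_{c(g)} = \{0\}$ whenever $e \neq c(g)$ (the $R_e$ being ideals of $R$ whose sum is direct), so $xy = x_{c(g)} y \in R_{c(g)} R_g \subseteq R_g$. A short computation with the multiplication rule \eqref{multiplicationrule}, again using $\alpha_{c(g)} = \id_{R_{c(g)}}$, then shows that the additive bijection $R_g \delta_g \to R_g$, $r_g \delta_g \mapsto r_g$, is an isomorphism of left $R$-modules. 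Since $R_g$ is a submodule of the left artinian module ${}_R R$, it is artinian, and hence so is $R_g \delta_g$. Consequently $S = \bigoplus_{g \in F} R_g \delta_g$, being a finite direct sum of artinian left $R$-modules, is artinian as a left $R$-module; this last step is the evident strengthening of Proposition~\ref{directsumartinian} to rings that need not be unital, applied here only to (copies of) submodules of ${}_R R$. Applying Proposition~\ref{subringartinian} to the subring $R \subseteq S$ and the left $S$-module $M = S$ now yields that $S$ is left artinian.

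The right-handed statement is proved symmetrically: one replaces the isomorphism of the previous paragraph by the isomorphism of right $R$-modules $R_g \delta_g \to R_{g^{-1}}$, $r_g \delta_g \mapsto \alpha_{g^{-1}}(r_g)$, and notes that $R_{g^{-1}}$, being an ideal of $R_{c(g^{-1})} = R_{d(g)}$, is a right $R$-submodule of $R$ by the same orthogonality argument. I do not anticipate a genuine obstacle; the only point requiring care is the bookkeeping with domains and codomains needed to check that the $R$-module structures transported along the $\delta_g$ agree with the intrinsic ones on $R$, and in each case this comes down to the relation $R_e R_f = \{0\}$ for distinct $e,f \in \ob(G)$, which follows from (P4).
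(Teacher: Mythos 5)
Your proof is correct and follows essentially the same route as the paper's: decompose $S$ into the finitely many non-zero summands $R_g\delta_g$, show each is artinian over the base subring $S_0\cong R$, conclude $S$ is artinian as an $S_0$-module, and finish with Proposition~\ref{subringartinian}. Your version is in fact slightly more careful, since you verify the $R$-module identifications explicitly and flag that Proposition~\ref{directsumartinian} is being used beyond its stated unital hypothesis, a point the paper's proof glosses over.
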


\begin{proof}
Put $S = R \star_{\alpha} G$ and $S_0 = \oplus_{e \in \ob(G)} R_e \delta_e$.
Since $R$ is an artinian ring we get by  Proposition \ref{exactsequence} that  $R_{c(g)}$ is also an  artinian ring, the same
proposition implies that
that $R_g \delta_g$ is 
artinian 
as a left (right) module over $R_{c(g)} \delta_{c(g)}$ 
($R_{d(g)} \delta_{d(g)}$).
Therefore, for each $g \in \mor(G)$, we get, from Proposition \ref{subringartinian},
that $R_g \delta_g$ is 
artinian 
as a left (right) $S_0$-module.
Then  Proposition \ref{directsumartinian} implies that  
$S$ is artinian as a left (right) $S_0$-module.
Thus, by Proposition \ref{subringartinian}, 
the ring $S$ is left (right) artinian.
\end{proof}

\begin{defi}\label{Gstack}
A {\it subgroupoid} of $G$ is a subcategory 
of $G$ that is a groupoid in itself.
Define the subgroupoid $G^{\#}$ of $G$ in the following way.
The objects of $G^{\#}$ are all $e \in \ob(G)$ with $R_e$ non-zero.
The morphisms of $G^{\#}$ are all morphisms $g \in \mor(G)$
with $d(g),c(g) \in \ob(G^{\#})$.
Put $R^{\#} = \oplus_{e \in \ob( G^{\#} )} R_e$ and
let $\alpha^{\#}$ denote the restriction of $\alpha$ to $G^{\#}$.
Then $R \star_{\alpha} G = R^{\#} \star_{\alpha^{\#}} G^{\#}$.
\end{defi}

\begin{prop}\label{noetherianfinite}
If $R \star_\af G$ is left/right artinian (noetherian), 
then ${\rm ob}(G^{\#})$ is finite.
\end{prop}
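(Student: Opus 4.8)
The plan is to prove the contrapositive: assuming $\ob(G^{\#})$ is infinite, I will show that $S := R \star_\af G$ is neither left artinian, nor right artinian, nor left noetherian, nor right noetherian, by exhibiting in each case an infinite strictly monotone chain of one-sided ideals. Fix a sequence $e_1, e_2, e_3, \ldots$ of pairwise distinct elements of $\ob(G^{\#})$. Recall that, as an additive group, $S = \oplus_{g \in \mor(G)} R_g \delta_g$, so for each $n$ the summand $R_{e_n} \delta_{e_n}$ is a direct summand of $S$, and it is nonzero precisely because $e_n \in \ob(G^{\#})$, that is, $R_{e_n} \neq \{0\}$. (Note that no unitality or associativity hypothesis is used, and only the additive decomposition of $S$, not axiom (P4) for $R$, is needed.)

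The one real idea is that, although the ``object decomposition'' $\oplus_{e \in \ob(G)} R_e \delta_e$ of $S$ is in general not an ideal, one does obtain one-sided ideals by grouping the summands $R_g \delta_g$ according to the \emph{domain} or the \emph{codomain} of $g$. Concretely, for a subset $E \subseteq \ob(G)$ put
\[
I_E \;=\; \bigoplus_{\substack{g \in \mor(G) \\ d(g) \in E}} R_g \delta_g ,
\qquad\qquad
J_E \;=\; \bigoplus_{\substack{g \in \mor(G) \\ c(g) \in E}} R_g \delta_g .
\]
Using the multiplication rule \eqref{multiplicationrule} together with the identities $d(gh) = d(h)$ and $c(gh) = c(g)$, valid for every composable pair $(g,h) \in G^2$, a routine check shows that $I_E$ is a left ideal and $J_E$ is a right ideal of $S$: each product of homogeneous elements either vanishes or lands in a summand indexed by a morphism whose domain (resp.\ codomain) is unchanged, hence stays inside the prescribed set of summands.

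Now set $E_n = \{ e_1, \ldots, e_n \}$ and let $E_n^{c} = \ob(G) \setminus E_n$. Then $I_{E_1} \subseteq I_{E_2} \subseteq \cdots$ and $J_{E_1} \subseteq J_{E_2} \subseteq \cdots$ are ascending chains of left ideals, respectively right ideals, of $S$, while $I_{E_1^c} \supseteq I_{E_2^c} \supseteq \cdots$ and $J_{E_1^c} \supseteq J_{E_2^c} \supseteq \cdots$ are descending chains of the same types. Each of these four chains is strictly monotone: the morphism $e_{n+1} = \Id_{e_{n+1}}$ has $d(e_{n+1}) = c(e_{n+1}) = e_{n+1}$, so the nonzero summand $R_{e_{n+1}} \delta_{e_{n+1}}$ is contained in $I_{E_{n+1}}$, in $J_{E_{n+1}}$, in $I_{E_n^c}$ and in $J_{E_n^c}$ (because $e_{n+1} \in E_{n+1}$ and $e_{n+1} \in E_n^c$), yet it meets $I_{E_n}$, $J_{E_n}$, $I_{E_{n+1}^c}$ and $J_{E_{n+1}^c}$ trivially (because $e_{n+1} \notin E_n$ and $e_{n+1} \notin E_{n+1}^c$), using that these are genuine direct-sum decompositions. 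Hence $S$ is neither left/right artinian nor left/right noetherian, which proves the proposition. The only mildly delicate point is the verification that $I_E$ and $J_E$ are one-sided ideals; everything else is bookkeeping with the index set $\mor(G)$.
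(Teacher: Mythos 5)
Your proposal is correct and is essentially the paper's own argument: the paper likewise builds left ideals $\oplus_{g} R_g\delta_g$ indexed by the condition $d(g)\in E$ and right ideals by $c(g)\in E$, using tails $\{e_i,e_{i+1},\dots\}$ for the descending (artinian) chains and initial segments $\{e_1,\dots,e_i\}$ for the ascending (noetherian) chains, with the nonzero elements $r_i\delta_{e_i}$ witnessing strictness exactly as you do. Your use of complements of finite sets instead of tails of the chosen sequence is an immaterial cosmetic difference.
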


\begin{proof}
Put $S = R \star_\af G$. 
Seeking a contradiction, suppose that there is
an infinite set of different elements $\{ e_i \}_{i \in {\mathbb N}}$ in $\ob(G)$
such that for every $i \in {\mathbb N}$, there is a non-zero $r_i \in R_{e_i}$. 

Suppose that $S$ is left artinian.
Define 
a set of left ideals $\{ I_i \}_{i \in {\mathbb N}}$
of $S$ by $I_i = \oplus_{g \in G^i} R_g \delta_g$, for $i \in {\mathbb N}$,
where $G^i = \{ g \in \mor(G) \mid d(g) \in \{ e_i, e_{i+1},e_{i+2},\ldots \} \}$.
For every $i \in {\mathbb N}$ and $r_i\in R_{e_i}$, we get that
$r_i \delta _{e_i} \in I_i \setminus I_{i+1}$. Hence, $\{ I_i \}_{i \in {\mathbb N}}$
is a strictly descending chain of left ideals of $S$,
which is a contradiction.

Suppose that $S$ is right artinian.
Define 
a set of right ideals $\{ I_i \}_{i \in {\mathbb N}}$
of $S$ by $I_i = \oplus_{g \in G^i} R_g \delta_g$, for $i \in {\mathbb N}$,
where $G^i = \{ g \in \mor(G) \mid c(g) \in \{ e_i, e_{i+1},e_{i+2},\ldots \} \}$.
For each $i \in {\mathbb N}$ and $r_i\in R_{e_i}$,  we get that
$r_i \delta _{e_i} \in I_i \setminus I_{i+1}$. Hence, $\{ I_i \}_{i \in {\mathbb N}}$
is a strictly descending chain of right ideals of $S$,
which is a contradiction.

Suppose that $S$ is left noetherian.
Define the set of left ideals $\{ I_i \}_{i \in {\mathbb N}}$
of $S$ by $I_i = \oplus_{g \in G^i} R_g \delta_g$, for $i \in {\mathbb N}$,
where $G^i = \{ g \in \mor(G) \mid d(g) \in \{ e_1, e_2,\ldots , e_i \} \}$.
For every $i \in {\mathbb N}$, we get that
$r_i e_{i+1} \in I_{i+1} \setminus I_i$. Hence, $\{ I_i \}_{i \in {\mathbb N}}$
is a strictly ascending chain of left ideals of $S$,
which is a contradiction.

Suppose that $S$ is right noetherian.
Define the set of right ideals $\{ I_i \}_{i \in {\mathbb N}}$
of $S$ by $I_i = \oplus_{g \in G^i} R_g \delta_g$, for $i \in {\mathbb N}$,
where $G^i = \{ g \in \mor(G) \mid c(g) \in \{ e_1, e_2 ,\ldots, e_i \} \}$.
For every $i \in {\mathbb N}$, we get that
$r_{i+1} e_{i+1} \in I_{i+1} \setminus I_i$. Hence, $\{ I_i \}_{i \in {\mathbb N}}$
is a strictly ascending chain of right ideals of $S$,
which is a contradiction.
\end{proof}

\begin{prop}\label{ONLYIF1}
If $\alpha$ is unital and $R \star_{\alpha} G$ is left/right artinian (noetherian),
then $R$ is left/right artinian (noetherian).
\end{prop}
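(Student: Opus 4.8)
The plan is to exploit the decomposition $R = \oplus_{e \in \ob(G)} R_e$ (axiom (P4)) together with the fact that, when $\alpha$ is unital, each non-zero $R_e \delta_e$ carries an identity element $1_{R_e} \delta_e$, and these assemble into an idempotent structure inside $S = R \star_\alpha G$. First I would note that by Proposition \ref{noetherianfinite} the set $\ob(G^{\#})$ is finite, say $\ob(G^{\#}) = \{ e_1, \ldots, e_n \}$, since every non-zero $R_e$ forces $e \in \ob(G^{\#})$. By Definition \ref{Gstack} we may replace $G$ by $G^{\#}$ and $R$ by $R^{\#}$ without changing $S$, so we may assume $\ob(G)$ itself is finite and every $R_e$ is unital (or zero, but zero summands can be discarded). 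Then $u := \sum_{e \in \ob(G)} 1_{R_e} \delta_e$ is a well-defined element of $S$, and one checks directly from the multiplication rule \eqref{multiplicationrule} that $u$ is a central idempotent of $S$ and that $S_0 := \oplus_{e \in \ob(G)} R_e \delta_e \cong R$ as rings, via $r \mapsto \sum_e (1_{R_e} r_e) \delta_e$ where $r = \sum_e r_e$.

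Next I would show that $R \cong S_0 = uSu$ is a direct summand of $S$ as a left (right) $S_0$-module, hence in particular as a left (right) $R$-module. The point is that $S = \oplus_{g \in \mor(G)} R_g \delta_g$, and $S_0$ is precisely the part indexed by $g \in \ob(G)$; its complement $\oplus_{g \in \mor(G) \setminus \ob(G)} R_g \delta_g$ is a left (right) $S_0$-submodule of $S$ because for $e \in \ob(G)$ the product $(r_e \delta_e)(r_g' \delta_g)$ lands in $R_{eg}\delta_{eg}$, and $eg \in \ob(G)$ forces $g \in \ob(G)$. Thus $R$ is a direct summand of $S$ as a left (right) $R$-module.

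Now I would invoke Proposition \ref{lemmasummand}: since $S$ is left (right) artinian (noetherian) and $R \cong S_0$ is unital — its identity is $u$ — and is a direct summand of $S$ as a left (right) $R$-module, we conclude that $R$ is left (right) artinian (noetherian). Strictly, Proposition \ref{lemmasummand} is phrased with $R \subseteq S$; here the honest statement is that $S_0$ is a unital subring of $S$ which is a direct summand of $S$ as a left (right) $S_0$-module, so $S_0$ is left (right) artinian (noetherian) by that proposition, and then $R \cong S_0$ transports the property back. The main obstacle — really the only delicate point — is verifying carefully that $u$ is central in $S$ and that $S_0$ is genuinely a subring isomorphic to $R$ when $\alpha$ is merely unital (not global): one must check that $1_{R_e}$ acts as identity on $R_g$ whenever $c(g) = e$, which follows because $R_g$ is an ideal of $R_{c(g)} = R_e$ and $R_e$ is unital, and that no cross terms between distinct objects appear, which is the observation about composability above. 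Everything else is a routine application of the preliminary propositions.
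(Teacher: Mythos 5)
Your argument is correct and is essentially the paper's own proof: identify $R$ with the unital subring $S_0=\oplus_{e}R_e\delta_e$ (which has only finitely many non-zero summands by Proposition \ref{noetherianfinite}, hence the identity $\sum_e 1_{R_e}\delta_e$), observe that $S_0$ is a direct summand of $S$ as a left/right $S_0$-module, and apply Proposition \ref{lemmasummand} together with (P4). One cosmetic slip worth noting: once $\ob(G^{\#})$ is finite, the element $u=\sum_e 1_{R_e}\delta_e$ is in fact the multiplicative identity of all of $S$, so $uSu=S$ rather than $S_0$ --- but nothing in your argument depends on that identification, since you establish the direct-summand decomposition directly from the grading.
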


\begin{proof}
Put $S = R \star_{\alpha} G$ and $S_0 = \oplus_{e \in \ob(G)} R_e \delta_e$.
By Proposition \ref{noetherianfinite}, 
we can write $S_0 = \oplus_{i=1}^n R_{e_i} \delta_{e_i},$
for $e_1,\ldots,e_n \in \ob(G)$ such that $R_{e_1},\ldots,R_{e_n}$ are all non-zero.
Then  $S_0$ is unital with multiplicative identity
given by $\sum_{i=1}^n 1_{R_{e_i}} \delta_{e_i}$.
Since $S_0$ is a direct summand
in $S$ as left/right $S_0$-modules, we get, from Proposition \ref{lemmasummand},
that $S_0$ is left/right artinian (noetherian). 
The desired conclusion now follows from (P4).
\end{proof}

Take $e,f \in \ob(G)$. We let 
$G(e,f)$ denote the set of morphisms in $G$ 
with domain $e$ and codomain $f$.
Then the set $G(e,e)$ is a group. We denote this group by $G_e$. Notice that $\af_e=\{\af_g\colon R_{g\m}\to R_g\}_{g\in G_e}$ is a partial (group) action of $G_e$ on $R_e$.

\begin{prop}\label{propfinitegroupoid}
The set  $\mor(G)$ is finite,
if and only if, $\ob(G)$ is finite and for each $e \in \ob(G)$,
the group $G_e$ is finite. 
\end{prop}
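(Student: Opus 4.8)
The statement is a purely combinatorial fact about groupoids, so the proof should be elementary and not involve rings at all. The plan is to use the standard decomposition of the morphism set of a groupoid according to domain and codomain: $\mor(G) = \bigsqcup_{e,f \in \ob(G)} G(e,f)$. The key structural observation I would invoke is that for any $e, f \in \ob(G)$, the set $G(e,f)$ is either empty or in bijection with $G_e$; indeed, fixing any morphism $h \in G(e,f)$, the map $g \mapsto hg$ gives a bijection $G_e = G(e,e) \to G(e,f)$ (its inverse is $k \mapsto h^{-1}k$). Thus $|G(e,f)| \in \{ 0 \} \cup \{ |G_e| \}$, and in particular $G(e,f)$ is finite precisely when $G_e$ is finite or $G(e,f) = \emptyset$.

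For the ``if'' direction, suppose $\ob(G)$ is finite and each $G_e$ is finite. Then $\mor(G) = \bigsqcup_{e,f \in \ob(G)} G(e,f)$ is a finite union (since $\ob(G) \times \ob(G)$ is finite) of finite sets, because each $G(e,f)$ is either empty or equinumerous with $G_e$, which is finite by hypothesis; hence $\mor(G)$ is finite.

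For the ``only if'' direction, suppose $\mor(G)$ is finite. Since $\ob(G) \subseteq \mor(G)$ (identifying each object $e$ with $\Id_e$), $\ob(G)$ is finite. Moreover, for each $e \in \ob(G)$ we have $G_e = G(e,e) \subseteq \mor(G)$, so $G_e$ is finite as well. This completes the argument; none of the steps presents a genuine obstacle, as the whole proposition is really just bookkeeping with the coset-type decomposition of $\mor(G)$, and the only mild point to state carefully is the bijection $G(e,e) \to G(e,f)$ witnessing $|G(e,f)| \le |G_e|$ when $G(e,f) \ne \emptyset$.
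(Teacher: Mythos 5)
Your proof is correct and follows essentially the same route as the paper: decompose $\mor(G)$ as $\bigcup_{e,f\in\ob(G)} G(e,f)$ and use the bijection $G_e \ni g \mapsto hg \in G(e,f)$ for a fixed $h \in G(e,f)$ to reduce finiteness of each nonempty $G(e,f)$ to finiteness of $G_e$. The ``only if'' direction, which the paper simply calls clear, is handled by the same containments $\ob(G) \subseteq \mor(G)$ and $G_e \subseteq \mor(G)$ that you spell out.
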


\begin{proof}
The ``only if'' statement is clear.
Now we show the ``if'' statement.
Suppose that $\ob(G)$ is finite and that for each $e \in \ob(G)$,
the group $G_e$ is finite. 
Since $\mor(G)=\bigcup_{e,f\in \ob(G)}G(e,f)$,  one only needs 
to show that $G(e,f)$ is finite, for all $e,f\in\ob(G). $ 
Notice that 
if $G(e,f)$ is non-empty, then we have a bijection $G_e\ni g\mapsto hg\in G(e,f)$, 
where $h$  is a fixed element in $G(e,f)$.
Hence, $G(e,f)$ is finite as desired.
\end{proof}

\begin{defi}
Recall  that a  subring $A$ of $R$ is called {\it $G$-invariant}, if 
for every $g \in {\rm mor}(G)$, the inclusion 
$\af_g(A\cap R_{g\m})\subseteq A\cap R_g$ holds.
In that case, the restriction of $\af_g$ to $R_{g^{-1}}' := R_{g^{-1}} \cap A$, 
for $g\in \mor(G)$, gives rise to a partial action of $G$ on $A$.
\end{defi}

\begin{prop}\label{lemmaskewgroupoidring}
If $\alpha$ is global and unital
such that $R \star_{\alpha} G$ is left (right) artinian and
for each $e \in \ob(G^{\#})$, $R_e$ is simple, then $\mor(G^{\#})$ is finite.
\end{prop}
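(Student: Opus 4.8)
The plan is to exploit the structure of the global unital action to reduce the problem to a union of group-action situations, one for each object of $G^{\#}$, and then to invoke Proposition~\ref{propfinitegroupoid} together with a Connell/Park-type argument for each group. First I would replace $G$ by $G^{\#}$, which by Definition~\ref{Gstack} and Proposition~\ref{noetherianfinite} is legitimate: $R\star_\alpha G=R^{\#}\star_{\alpha^{\#}}G^{\#}$ and $\ob(G^{\#})$ is already known to be finite, say $\ob(G^{\#})=\{e_1,\dots,e_n\}$ with each $R_{e_i}$ simple. Since $\mor(G^{\#})=\bigcup_{i,j}G(e_i,e_j)$ is a finite union, it suffices to show each $G(e_i,e_j)$ is finite, and (as in the proof of Proposition~\ref{propfinitegroupoid}) via the bijection $G_{e_i}\ni g\mapsto hg\in G(e_i,e_j)$ for a fixed $h\in G(e_i,e_j)$, it is enough to show that each isotropy group $G_{e}:=G(e,e)$ is finite, for $e\in\ob(G^{\#})$.

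Next I would fix $e\in\ob(G^{\#})$ and analyse the corner of $S:=R\star_\alpha G$ cut out by $e$. Because the action is global, $R_g=R_{c(g)}$ for every $g$, so for $g\in G_e$ we have $R_g=R_e$; thus $A:=\bigoplus_{g\in G_e}R_e\delta_g$ is exactly the skew group ring $R_e\ast_{\alpha_e}G_e$ in the sense recalled in the introduction (here $\alpha_e$ is the genuine group action of $G_e$ on $R_e$, since globality makes each $\alpha_g$ an automorphism of $R_e$). The key structural point is that $A=\delta_e\, S\, \delta_e$ is a ``corner'' of $S$: the idempotent $\delta_e=1_{R_e}\delta_e$ (using unitality) satisfies $\delta_e S\delta_e=A$, because a product $(r_g\delta_g)$ survives multiplication by $\delta_e$ on both sides precisely when $c(g)=d(g)=e$, i.e. $g\in G_e$. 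A standard fact — which I would either cite or prove in a line from Proposition~\ref{exactsequence}/\ref{lemmasummand} — is that a corner $uSu$ of a left artinian ring $S$ by an idempotent $u$ is again left artinian (the left ideals of $uSu$ inject order-preservingly into the left ideals of $S$ via $J\mapsto SJ$, or one uses that $uSu$-modules are a full subcategory). Hence $A=R_e\ast_{\alpha_e}G_e$ is left (right) artinian.

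Now Park's Theorem~\ref{ParkTheorem} would finish it immediately — \emph{if} $R_e$ were associative. Since $R_e$ is merely a simple unital non-associative ring, I instead argue directly: $R_e$ is simple, so by Proposition~\ref{Zfield} its center $Z(R_e)$ is a field $K$, and $R_e$ is a $K$-algebra; moreover an artinian simple ring is finite-dimensional-like in the sense that it has a composition series, so one shows the skew group ring $R_e\ast_{\alpha_e}G_e$ being left artinian forces $G_e$ finite. The cleanest route: suppose $G_e$ is infinite; pick a sequence of distinct elements $g_1,g_2,\dots\in G_e$ and build a strictly descending chain of left ideals $I_m=\bigoplus_{k\ge m}R_e\delta_{g_k}$ — but these are left ideals only if $\{g_k\}$ is closed under left multiplication by $G_e$, which fails, so this naive chain does not work and one must be more careful. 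The honest argument mirrors Connell–Park: let $\omega$ denote the augmentation-type left ideal generated by $\{\,r\delta_g - \alpha_g\!\big(\alpha_{g^{-1}}(r)\big)\delta_e : g\in G_e, r\in R_e\,\}$...

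Let me restate this third paragraph more carefully, since the descending-chain trick does need the right formulation.

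\medskip
\noindent\textbf{The argument for the isotropy group, take two.} Having reduced to: $A:=R_e\ast_{\alpha_e}G_e$ is left (right) artinian with $R_e$ simple unital, I would show $G_e$ is finite as follows. The augmentation map $\varepsilon\colon A\to R_e$, $\sum r_g\delta_g\mapsto \sum r_g$ (well-defined and $R_e$-linear because $R_e=R_g$ for all $g\in G_e$ by globality) is a surjective ring homomorphism, with kernel $\Delta$ the $R_e$-span of $\{\delta_g-\delta_e:g\in G_e\}$. Since $R_e$ is artinian (being a direct summand / quotient of $A$, or just because $R\star_\alpha G$ artinian $\Rightarrow R$ artinian by Proposition~\ref{ONLYIF1}, hence each simple summand $R_e$ is artinian), and $A/\Delta\cong R_e$, Proposition~\ref{exactsequence} tells us $\Delta$ is artinian as a left $A$-module, a fortiori as a left $R_e$-module. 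For a finite subset $F\subseteq G_e\setminus\{e\}$ the $R_e$-submodule $\bigoplus_{g\in F}R_e(\delta_g-\delta_e)$ sits inside $\Delta$, and these grow strictly with $F$; choosing an infinite ascending sequence of such $F$ contradicts the descending chain condition via complements — more precisely, $\Delta$ as a left $R_e$-module is a direct sum $\bigoplus_{g\neq e}R_e(\delta_g-\delta_e)$, and an infinite direct sum of nonzero $R_e$-modules is never artinian (each tail $\bigoplus_{g\notin\{g_1,\dots,g_m\}}R_e(\delta_g-\delta_e)$ gives a strictly descending chain). Hence $G_e$ is finite. Combined with the first paragraph, $\mor(G^{\#})$ is finite. \cua

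\medskip
\noindent\textbf{Main obstacle.} I expect the delicate point to be the passage from ``$S$ left artinian'' to ``the corner $\delta_e S\delta_e$ left artinian'' in the non-associative setting — the usual Peirce-decomposition argument uses associativity freely — and, relatedly, justifying that $\Delta=\ker\varepsilon$ decomposes as an honest internal direct sum of the $R_e(\delta_g-\delta_e)$ as a left $R_e$-module so that the infinite-direct-sum obstruction applies. In the globally-acting unital case both should go through because the relevant products $\delta_e\cdot(r\delta_g)\cdot\delta_e$ and the $R_e$-module structure on each isotypic component $R_e\delta_g$ are controlled by the single ring $R_e$, but this is the step that needs care rather than the combinatorics of the groupoid, which is routine given Proposition~\ref{propfinitegroupoid}.
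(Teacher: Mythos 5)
Your reduction to the isotropy groups $G_e$ via Proposition~\ref{propfinitegroupoid} matches the paper, and identifying $A=\oplus_{g\in G_e}R_e\delta_g$ with the skew group ring $R_e*_{\alpha_e}G_e$ is sound, since globality gives $R_g=R_{c(g)}=R_e$ for $g\in G_e$. But the proof breaks in the final step, and not at the point you flag. (The corner issue is the lesser problem: ``corners of artinian rings are artinian'' does lean on associativity, but you could instead note that $S=A\oplus\bigl(\oplus_{g\notin G_e}R_g\delta_g\bigr)$ as right $A$-modules and apply Proposition~\ref{lemmasummand}, since $A$ is unital.) The fatal step is your substitute for Park's theorem. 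From $A/\Delta\cong R_e$ and Proposition~\ref{exactsequence} you may conclude that $\Delta$ is artinian \emph{as a left $A$-module}; your ``a fortiori as a left $R_e$-module'' reverses the correct implication --- Proposition~\ref{subringartinian} goes the other way (artinian over a subring implies artinian over the overring, never conversely). The decomposition $\Delta=\oplus_{g\neq e}R_e(\delta_g-\delta_e)$ is into $R_e$-submodules, not $A$-submodules, so the infinite-direct-sum obstruction lives exactly where you have no artinianity. If this argument worked, it would yield a two-line proof of Connell's theorem, which in fact requires Hopkins--Levitzki and a finite-generation argument for $G$; that is a reliable sign something is wrong.

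The paper's proof makes one move you start but abandon: since $R_e$ is simple and unital, $F=Z(R_e)$ is a field by Proposition~\ref{Zfield}, it is $G_e$-invariant, and $T=F\star_\beta G_e$ is an \emph{associative} ring to which Park's Theorem~\ref{ParkTheorem} legitimately applies. Writing $R_e=F\oplus V$ as $F$-vector spaces, one checks that $S=T\oplus T'$ with $T'$ a right (left) $T$-module, so $T$ is left (right) artinian by Proposition~\ref{lemmasummand}, whence $G_e$ is finite by Park, and $\mor(G^{\#})$ is finite by Propositions~\ref{noetherianfinite} and~\ref{propfinitegroupoid}. Note that this is where the hypothesis that each $R_e$ is simple is actually consumed; in your final argument simplicity plays no role, which is another indication that the argument cannot be correct as stated.
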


\begin{proof}
Put $S = R \star_{\alpha} G = R^{\#} \star_{\alpha^{\#}} G^{\#}$
and take $e \in \ob(G)$ with $R_e$ non-zero.
Put $F = Z(R_e)$. Then $F$ is $G_e$-invariant and 
since $R_e$ is simple, we get, by Proposition \ref{Zfield}, that $F$ is a field.
Let $\beta$ be the induced group action of $G_e$ on $F$.
Put $T = F \star_{\beta} G_e$. 
We claim that $T$ is a direct summand in $S$ as right (left) $T$-modules.
Assume, for a moment, that the claim holds.
From Lemma \ref{directsumartinian} it follows that
$T$ is left (right) artinian. 
By Theorem \ref{ParkTheorem}, we get that $G_e$ is finite.
From Proposition \ref{noetherianfinite} and
Proposition \ref{propfinitegroupoid}, we have that $\mor(G^{\#})$ is finite.
Now we show the claim.
Since $F$ is a field, there is an $F$-subspace $V$
of $R_e$ such that $R_e = F \oplus V$ as vector spaces over $F$.
Put $T' = ( \oplus_{g \in G(e,e)} V \delta_g ) \oplus 
( \oplus_{g \in \mor(G) \setminus G_e } R_{c(g)} \delta_g)$.
It is clear that $S = T \oplus T'$ as additive groups.
What remains to show now is that $T'$ is a right (left) $T$-module.
We will only show the ``right'' part since the ``left'' part 
can be shown in an analogous manner.
To this end, take $f \in F$ and $g \in G_e$.
If $h \in G_e$ and $v \in V$, then
$(v \delta_h)(f \delta_g) = v \alpha_h(f) \delta_{hg} \in 
\oplus_{g \in G_e} V \delta_g$, since $F$ is $G_e$-invariant
and $V$ is a right $F$-vector space.
Suppose now that $h \in \mor(G) \setminus G_e$ and $r \in R_{c(h)}$.
If $d(h) \neq e$, then $(r \delta_h)(f \delta_g)=0 \in T'$.
If $d(h) = e$, then $c(h) \neq e$ and hence $hg \in \mor(G) \setminus G_e$.
Thus $(r \delta_h)(f \delta_g) \in \oplus_{g \in \mor(G) 
\setminus G_e } R_{c(g)} \delta_g \subseteq T'$.
\end{proof}

\begin{prop}\label{ONLYIF2} 
If $\alpha$ is a unital partial action, $R$ is semisimple and $R \star_\af G$ 
is left (right) artinian, 
then $R_g = \{ 0 \}$ for all but finitely many $g \in \mor(G)$.
\end{prop}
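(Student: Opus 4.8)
The plan is to deduce this from Proposition~\ref{lemmaskewgroupoidring} by replacing $G$ by a ``refined'' groupoid whose object rings are \emph{simple} and on which the induced partial action is \emph{global}.

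First I would record some reductions. Since $R$ is semisimple, write $R = A_1 \oplus \cdots \oplus A_k$ with each $A_i$ a simple ring; the $A_i$ are then exactly the minimal nonzero ideals of $R$. By (P4), $R = \oplus_{e \in \ob(G)} R_e$, and since each $R_e$ is an ideal of $R$ it is the direct sum of a subset of the $A_i$; in particular each $A_i$ lies in a unique $R_{e(i)}$ with $e(i) \in \ob(G^{\#})$, and $R_e = \oplus_{e(i)=e} A_i$. As $\alpha$ is unital and $R_e \neq \{0\}$ for $e \in \ob(G^{\#})$, each such $R_e$ is unital, so splitting $1_{R_e}$ along the components shows that every $A_i$ is unital. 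By Proposition~\ref{noetherianfinite}, $\ob(G^{\#})$ is finite, hence $k < \infty$. Finally, for $g \in \mor(G)$ both $R_{g^{-1}}$ (an ideal of $R_{d(g)}$) and $R_g$ (an ideal of $R_{c(g)}$) are direct sums of simple components, and since a ring isomorphism between semisimple rings maps minimal nonzero ideals bijectively onto minimal nonzero ideals, $\alpha_g$ induces a bijection $\pi_g$ from $\{\, i : A_i \subseteq R_{g^{-1}} \,\}$ onto $\{\, i : A_i \subseteq R_g \,\}$ with $\alpha_g(A_i) = A_{\pi_g(i)}$.

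Next I would build the refined groupoid $\widetilde{G}$: its objects are $1, \ldots, k$ (thought of as the $A_i$), and its morphisms are the pairs $(g,i)$ with $g \in \mor(G^{\#})$ and $A_i \subseteq R_{g^{-1}}$, where $d((g,i)) = i$, $c((g,i)) = \pi_g(i)$, $(g,i)^{-1} = (g^{-1}, \pi_g(i))$, and $(g,i)(h,j) = (gh, j)$ whenever $\pi_h(j) = i$. Checking that $\widetilde{G}$ is a groupoid is routine; the only nonformal points --- that $(gh,j)$ is again a morphism and that $c((gh,j)) = \pi_g(\pi_h(j))$ --- are precisely axioms (P2) and (P3). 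Then set $\widetilde{R} = A_1 \oplus \cdots \oplus A_k$ with the finer grading $\widetilde{R}_i = A_i$, and for $(g,i) \in \mor(\widetilde{G})$ put $\widetilde{R}_{(g,i)} = A_{\pi_g(i)}$ and $\widetilde{\alpha}_{(g,i)} = \alpha_g|_{A_i} \colon A_i \to A_{\pi_g(i)}$. Since $\widetilde{R}_{(g,i)} = \widetilde{R}_{c((g,i))}$, the action $\widetilde{\alpha}$ is global; it is unital because each $A_i$ is; each $\widetilde{R}_i$ is simple; and (P1)--(P3) together with the ideal requirements hold by construction. Moreover the assignment $a\,\delta_{(g,i)} \mapsto a\,\delta_g$ (for $a \in A_{\pi_g(i)} \subseteq R_g$) is additive and bijective --- on the $\delta_g$-component it is just the identification $\oplus_{A_i \subseteq R_{g^{-1}}} A_{\pi_g(i)} = R_g$ --- and, using \eqref{multiplicationrule} and the fact that distinct simple components of a ring annihilate one another, it is multiplicative. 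Hence $\widetilde{R} \star_{\widetilde{\alpha}} \widetilde{G} \cong R \star_{\alpha} G$.

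It then follows that $\widetilde{R} \star_{\widetilde{\alpha}} \widetilde{G}$ is left (right) artinian, and since $\widetilde{\alpha}$ is global and unital with each $\widetilde{R}_i$ simple and nonzero (so $\widetilde{G}^{\#} = \widetilde{G}$), Proposition~\ref{lemmaskewgroupoidring} gives that $\mor(\widetilde{G})$ is finite. But every $g \in \mor(G)$ with $R_g \neq \{0\}$ lies in $\mor(G^{\#})$ and yields at least one morphism $(g,i)$ of $\widetilde{G}$, so only finitely many $g \in \mor(G)$ satisfy $R_g \neq \{0\}$, which is the assertion. The main obstacle is the middle step: verifying in detail that (P2) and (P3) for $\alpha$ really do make $\widetilde{G}$ a well-defined groupoid and $\widetilde{\alpha}$ a global unital partial action on it --- i.e.\ that $\pi$ behaves multiplicatively on the correct domains; the reductions, the ring isomorphism, and the appeal to Proposition~\ref{lemmaskewgroupoidring} are all straightforward. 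As an alternative to building $\widetilde{G}$, one can imitate the proof of Proposition~\ref{lemmaskewgroupoidring} directly: for a fixed component $A_i$, the set $\Gamma_i$ of $g \in G_{e(i)}$ with $\alpha_g(A_i) = A_i$ is a group acting by automorphisms on $A_i$, hence on the field $Z(A_i)$; one shows the resulting classical skew group ring over $Z(A_i)$ is a module-direct-summand of $R \star_{\alpha} G$, hence artinian, applies Theorem~\ref{ParkTheorem} to conclude $\Gamma_i$ is finite, and then bounds $\{\, g : \alpha_g(A_i) = A_j \,\}$ by $|\Gamma_i|$ via groupoid cancellation, for each of the finitely many pairs $(i,j)$.
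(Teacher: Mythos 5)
Your proposal is correct and follows essentially the same route as the paper: the paper likewise decomposes the semisimple ring into simple summands, builds a refined groupoid $\overline{G}$ whose morphisms are symbols ${}_j g_i$ recording $\alpha_g(R_i)=R_j$ (your $(g,i)$ with $j=\pi_g(i)$), observes that the induced action is global with simple object rings and that the partial skew groupoid ring is unchanged, and then applies Proposition~\ref{lemmaskewgroupoidring} together with the injectivity of $g \mapsto {}_{j(g)}g_{i(g)}$. Your write-up actually makes explicit a few points the paper leaves implicit (unitality of the simple components and the case $R_g=\{0\}$), but the underlying argument is the same.
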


\proof 
Write $R = R_1 \oplus \cdots \oplus R_n$ as a direct sum of simple rings.
Put $\overline{n} = \{ 1,\ldots,n \}$.
Then, for every $g \in G$,
$R_g$ is the direct sum of a non-empty subset of $\{ R_i \}_{i \in \overline{n}}$. 
Define a new category $\overline{G}$ in the following way.
As objects we take all $(i,e) \in \overline{n} \times \ob(G)$
such that $R_i$ is a direct summand in $R_e$.
As morphisms we take all formal expressions
of the form ${}_j g_i$, for $i,j \in \overline{n}$ and $g \in \mor(G)$,
such that $R_i$ is a direct summand in $R_{g^{-1}}$,
$R_j$ is a direct summand in $R_g$ and $\alpha_g(R_i)=R_j$.
Define the domain $d$  and codomain $c$ by the relations
$d( {}_j g_i ) = (i,d(g))$ and $c( {}_j g_i ) = (j,c(g))$,
for ${}_j g_i \in \mor(\overline{G})$.
The composition of ${}_k g_j$ and ${}_j h_i$ in $\mor(\overline{G})$
is defined by the relation 
$({}_k g_j) ({}_j h_i) = {}_k (gh)_i$. 
Note that since $\alpha_e = {\rm id}_{R_e}$, 
for each $(i,e) \in \ob(\overline{G})$, 
${}_i e_i$ is the identity morphism at $(i,e)$.
It is clear that $\overline{G}$ is a groupoid, 
where for ${}_j g_i \in \mor(\overline{G})$,
one has  $({}_j g_i)^{-1} =  {}_i (g^{-1})_j$.
For each ${}_j g_i \in \mor(\overline{G})$,
put $R_{ {}_j g_i } = R_j=R_{ {}_j c(g)_j }$ and 
$\overline{\alpha}_{ {}_j g_i } = p_j \circ \alpha_g \circ q_i$,
where $p_j : R \rightarrow R_j$ is
the projection onto the $j$th coordinate 
and $q_i : R_i \rightarrow R$ is the injection defined by inclusion. 
Then $\overline{\alpha} = 
\{ \overline{\alpha}_{ {}_j g_i } : R_{ {}_i g^{-1}_j }  \rightarrow R_{ {}_j g_i } \}$
is a global action of $G$ on $R$ such that
$R \star_{\overline{\alpha}} \overline{G} = R \star_{\alpha} G$.
Note that $\overline{G}^{\#} = \overline{G}$.
Thus, by Proposition \ref{lemmaskewgroupoidring}, we get that 
$\mor(\overline{G})$ is a finite set.
Put ${\mathcal G} = \{ g \in \mor(G) \mid R_g \ \mbox{is non-zero} \}$.
For each $g \in {\mathcal G}$, fix $i(g),j(g) \in \overline{n}$
such that ${}_{j(g)} g_{i(g)} \in \overline{G}$.
It is clear that the map 
${\mathcal G} \ni g \mapsto {}_{j(g)} g_{i(g)} \in \overline{G}$
is injective. Thus, ${\mathcal G}$ is also finite.
\endproof

\subsection*{Proof of Theorem \ref{ParkGeneral}.}
The ``if'' statement follows from Proposition \ref{IF}.
Now we show the only ``if'' statement.
Suppose that $\alpha$ is a global and unital action of a groupoid $G$ on $R$
such that $R \star_{\alpha} G$ is left (right) artinian.
We wish to show that $R$ is left (right) artinian and that 
$R_g = \{ 0 \}$ for all but finitely many $g \in \mor(G)$.
The first claim follows from Proposition \ref{ONLYIF1}.
Now we show the second claim.
For a ring $S$, put $\overline{S} = S/A(S)$.
For a ring isomorphism $f : S \rightarrow T$,
let $\overline{f} : \overline{S} \rightarrow \overline{T}$
be the natural map.
Since $\alpha$ is a global  action of $G$
on $R$, the collection of maps 
$\overline{\alpha} := \{ \overline{\alpha}_g : \overline{R}_{g^{-1}} 
\rightarrow \overline{R}_g \}_{g \in \mor(G)}$ 
is a global  action of $G$ on 
$\overline{R}=\oplus_{e \in \ob(G)}\overline{R}_e$.
Since, for every $g \in \mor(G)$ with $R_g \neq \{ 0 \}$,
$R_g$ is unital,
the ring $\overline{R}_g$ is also non-zero and,
by Proposition \ref{semisimplicitySR}, semisimple.
Since $\overline{R} \star_{\overline{\alpha}} G$ is an 
epimorphic image of $R \star_{\alpha} G$, we get that
$\overline{R} \star_{\overline{\alpha}} G$ is artinian.
Now, by Proposition \ref{ONLYIF2}  we get that ${R}_g = {\mathcal S}(R_g)$ 
for all but finitely many $g \in \mor(G)$, and the claim follows from the fact that the ideals  ${R}_g$ are  unital.
\qed

\subsection*{Proof of Theorem \ref{ParkGeneralAlternative}.}
The ``if'' statement follows from Proposition \ref{IF}.
Now we show the ``only if'' statement.
Suppose that $\alpha$ is a partial action of a groupoid $G$ 
on an alternative ring $R$ such that 
$R \star_{\alpha} G$ is left (right) artinian.
We shall prove that $R$ is left (right) artinian
and that $R_g = \{ 0 \},$ for all but finitely many $g \in \mor(G)$.
The first claim follows from Proposition \ref{ONLYIF1}.
Now we show the second claim.
For a ring $S$, put $\overline{S} = S/{\mathcal J}(S)$.
For a ring isomorphism $f : S \rightarrow T$,
let $\overline{f} : \overline{S} \rightarrow \overline{T}$
be the natural map.
Take $g \in \mor(G)$. Then $\overline{R}_g$ 
can be viewed as an ideal of $\overline{R}_{c(g)}$.
Indeed, the inclusion $i : R_g \rightarrow R_{c(g)}$
induces a well defined ring homomorphism 
$i' : R_g \rightarrow \overline{R}_{c(g)}$.
By Proposition \ref{JIhereditary}, we get that $i'$ induces
a well defined injective ring homomorphism
$i'' : \overline{R}_g \rightarrow \overline{R}_{c(g)}$.
If we use $i''$ to identify each element of $\overline{R}_g$ with its
image in $\overline{R}_{c(g)}$, then $\overline{R}_g$ 
can be viewed as an ideal of $\overline{R}_{c(g)}$.
It is clear that axioms (i)-(v) hold for 
the collection of maps 
$\overline{\alpha} := \{ \overline{\alpha}_g : \overline{R}_{g^{-1}} 
\rightarrow \overline{R}_g \}_{g \in \mor(G)}$.
Since, for every $g \in \mor(G)$ with $R_g \neq \{ 0 \}$,
$R_g$ is unital,
the ring $\overline{R}_g$ is also non-zero and,
by Proposition \ref{semisimplicityJR}, semisimple.
Since $\overline{R} \star_{\overline{\alpha}} G$ is an 
epimorphic image of $R \star_{\alpha} G$, we get that
$\overline{R} \star_{\overline{\alpha}} G$ is artinian.
The claim now follows from Proposition \ref{ONLYIF2} 
and the fact that $\af$ is unital.
\qed

\section{Applications to Groupoid rings, Partial group rings and Matrix rings}\label{sec:AppMix}

Let $G$ be a groupoid.
Define an equivalence relation $\sim$ on $\ob(G)$
by saying that if $e,f \in \ob(G)$, then 
$e \sim f$ if there is $g \in \mor(G)$ with
$d(g)=e$ and $c(g)=f$.
Let $E$ be a set of representatives 
for the different
equivalence classes of $\sim$.
For each $e \in E$, let $R_e$ be a unital non-associative ring
and put $R = \oplus_{e \in \ob(G)} R_e$.
For each $g \in \mor(G)$, let $e(g)$ denote the unique element in $E$
such that $c(g) \sim e(g)$.
For each $g \in \mor(G)$, put $R_g = R_{e(g)}$
and let $\alpha_g : R_{g^{-1}} \rightarrow R_g$ 
be the identity map.
Then $\{ \alpha_g : R_{g^{-1}} \rightarrow R_g \}_{g \in \mor(G)}$
is a partial action of the groupoid $G$ on $R$.
The corresponding partial skew groupoid ring $R \star_{\alpha} G$
coincides with the groupoid ring $R[G]$ of $G$ over $R$.
Using the above notation, we get the following generalization of Connell's classical result for group rings
\cite[Theorem 1]{C}.

\begin{teo}\label{groupoidring}
The groupoid ring $R[G]$ is left (right) artinian
if and only if $\mor(G)$ is finite and for each $e \in E$,
the ring $R_e$ is left (right) artinian. 
\end{teo}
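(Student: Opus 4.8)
The plan is to observe that the groupoid ring $R[G]$ is, by construction, the partial skew groupoid ring $R \star_{\alpha} G$ associated with the action $\alpha$ introduced just above, that this $\alpha$ is global and unital, and hence that Theorem~\ref{ParkGeneral} applies directly. First I would record the relevant properties of $\alpha$: for every $g \in \mor(G)$ one has $R_g = R_{e(g)}$ with $e(g) \in E$, so each $R_g$ is a \emph{unital} non-associative ring and in particular non-zero; moreover $R_g = R_{c(g)}$ for all $g$, so $\alpha$ is global by the criterion recalled in the introduction (\cite[Lemma~1.1(i)]{BP}), and it is unital since every $R_g$ is unital. Theorem~\ref{ParkGeneral} then tells us that $R[G] = R \star_{\alpha} G$ is left (right) artinian if and only if $R$ is left (right) artinian and $R_g = \{0\}$ for all but finitely many $g \in \mor(G)$. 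Since here \emph{no} $R_g$ is zero, the set $\{ g \in \mor(G) \mid R_g \neq \{0\} \}$ equals $\mor(G)$, so this last condition is nothing but the finiteness of $\mor(G)$.

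It remains to translate ``$R$ is left (right) artinian together with $\mor(G)$ finite'' into the stated form. If $\mor(G)$ is finite, then $\ob(G) \subseteq \mor(G)$ is finite, so $R = \oplus_{e \in \ob(G)} R_e$ is a \emph{finite} direct sum of unital rings, hence is itself unital, and each summand $R_e$ coincides with $R_{e'}$, where $e' \in E$ is the representative of the $\sim$-class of $e$. Consequently, by Proposition~\ref{directsumartinian} together with (P4), the ring $R$ is left (right) artinian precisely when $R_{e'}$ is left (right) artinian for every $e' \in E$. Combining this with the previous paragraph yields both implications. For ``only if'', Theorem~\ref{ParkGeneral} forces $\mor(G)$ to be finite and $R$ to be left (right) artinian, whence each $R_e$ ($e \in E$) is left (right) artinian. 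For ``if'', finiteness of $\mor(G)$ makes $R$ a finite direct sum of left (right) artinian rings, hence left (right) artinian, while the vanishing condition on the $R_g$ holds vacuously; Theorem~\ref{ParkGeneral} then gives that $R[G]$ is left (right) artinian.

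I do not expect a genuine difficulty: the substance is entirely carried by Theorem~\ref{ParkGeneral}, and what remains is bookkeeping. The three points that call for a little care are verifying that $\alpha$ is global and unital, observing that the clause ``$R_g = \{0\}$ for all but finitely many $g$'' collapses to ``$\mor(G)$ is finite'' precisely because every $R_g$ in this construction is unital, and taking care to invoke Proposition~\ref{directsumartinian} only after $\mor(G)$ — and therefore $\ob(G)$ and the direct sum defining $R$ — has been shown to be finite, so that $R$ is genuinely unital and the proposition applies.
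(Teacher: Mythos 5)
Your proposal is correct and follows exactly the route the paper takes: the paper's proof is the single line ``This follows immediately from Theorem \ref{ParkGeneral}'', and your write-up simply makes explicit the bookkeeping behind that citation (globality and unitality of $\alpha$, the collapse of the vanishing condition to finiteness of $\mor(G)$ since no $R_g$ is zero, and the passage between artinianity of $R$ and of the $R_e$ via the finite direct sum). No gaps.
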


\begin{proof}
This follows immediately from Theorem \ref{ParkGeneral}.
\end{proof}

\begin{rem}
E. I. Zelmanov \cite{zelmanov1977} has shown that if the semigroup ring $R[S]$ is left (or right) artinian for an associative ring $R$,
then the semigroup $S$ must be finite.
In \cite{kozhukhov1998} I. B. Kozhukhov showed that Zelmanov's result can not be generalized 
to (non-associative) magma rings.
Theorem \ref{groupoidring} appears to be the first
generalization of Connell's result which applies to non-associative rings.
\end{rem}

Let $I$ be a non-empty set and suppose that $T$ 
is a unital non-associative ring.
Define a groupoid ${\mathcal I}$ in the following way.
As objects of ${\mathcal I}$ we take the elements of $I$.
As morphisms of ${\mathcal I}$ we take the elements of $I \times I$.
Take $i,j,k \in I$.
We put $d(i,j) = j$ and $c(i,j)=i$.
The composition of $(i,j)$ with $(j,k)$, 
denoted $(i,j)(j,k)$, is defined to be $(i,k)$.
The equivalence relation $\sim$, from above, 
only has one equivalence class for the groupoid ${\mathcal I}$.
In other words, ${\mathcal I}$ is connected.
So if we let $R_e = T$, for $e \in \ob(G)$, 
then the skew groupoid ring $R \star_{\alpha} {\mathcal I}$
coincides with the generalized matrix ring $M_I(T)$.
With the above notation, we get the following result.

\begin{cor}\label{cor:genmatrixring}
The generalized matrix ring $M_I(T)$ is left (right) artinian
if and only if $I$ is finite and $T$ is left (right) artinian.
\end{cor}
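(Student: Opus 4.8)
The strategy is to recognize Corollary~\ref{cor:genmatrixring} as a direct specialization of Theorem~\ref{groupoidring} applied to the connected groupoid $\mathcal{I}$. Recall that in Theorem~\ref{groupoidring} one picks a set $E$ of representatives for the $\sim$-classes on $\ob(G)$; since $\mathcal{I}$ is connected, $E$ consists of a single object, say $E = \{ i_0 \}$ for some fixed $i_0 \in I$, and the corresponding ring $R_{i_0} = T$. With the choice $R_e = T$ for all $e \in \ob(\mathcal{I})$ and all structure maps $\alpha_g$ equal to identity maps, the partial skew groupoid ring $R \star_{\alpha} \mathcal{I}$ is, as observed just above the statement, exactly the generalized matrix ring $M_I(T)$.

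\emph{Step 1: translate the hypotheses of Theorem~\ref{groupoidring}.} By that theorem, $M_I(T) = R[\mathcal{I}]$ is left (right) artinian if and only if $\mor(\mathcal{I})$ is finite and $R_e$ is left (right) artinian for each $e \in E$. Since $|E| = 1$ and $R_{i_0} = T$, the second condition is simply that $T$ is left (right) artinian.

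\emph{Step 2: identify finiteness of $\mor(\mathcal{I})$ with finiteness of $I$.} By construction $\mor(\mathcal{I}) = I \times I$, so $\mor(\mathcal{I})$ is finite precisely when $I$ is finite. (One can also invoke Proposition~\ref{propfinitegroupoid}: $\ob(\mathcal{I}) = I$ is finite and each isotropy group $\mathcal{I}_e = \{ (e,e) \}$ is trivial, hence finite, so $\mor(\mathcal{I})$ finite $\iff$ $I$ finite.) Combining Steps 1 and 2 yields: $M_I(T)$ is left (right) artinian $\iff$ $I$ is finite and $T$ is left (right) artinian, which is the claim.

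\emph{On the obstacle.} There is no real obstacle here; the only point requiring a word of care is the verification that the groupoid $\mathcal{I}$ is connected — i.e. that $\sim$ has a single equivalence class — which is immediate since for any $i, j \in I$ the morphism $(j, i) \in \mor(\mathcal{I})$ has $d(j,i) = i$ and $c(j,i) = j$. Everything else is a transcription of Theorem~\ref{groupoidring} through the dictionary $R[\mathcal{I}] = M_I(T)$, so the proof is a one-line appeal to that theorem.
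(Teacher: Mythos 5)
Your proposal is correct and follows exactly the paper's route: the paper also deduces the corollary immediately from Theorem~\ref{groupoidring}, using the identification $M_I(T) = R \star_{\alpha} \mathcal{I}$ and the connectedness of $\mathcal{I}$ set up just before the statement. You merely spell out the details (singleton $E$, $\mor(\mathcal{I}) = I \times I$) that the paper leaves implicit.
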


\begin{proof}
This follows immediately from Theorem \ref{groupoidring}.
\end{proof}

Given a field $K$ and a group $G$
one may define the \emph{partial group algebra} $K_{\parpar}[G]$ (see e.g. \cite{DokuExelPicc2000}).

\begin{cor}\label{cor:PGA}
Let $K$ be a field.
The partial group algebra $K_{\parpar}[G]$ is artinian
if and only if $G$ is a finite group.
\end{cor}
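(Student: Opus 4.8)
The plan is to deduce Corollary \ref{cor:PGA} from Theorem \ref{groupoidring} by exhibiting the partial group algebra $K_{\parpar}[G]$ as a groupoid ring $R[\mathcal{H}]$ for a suitable groupoid $\mathcal{H}$, built from the partial action structure underlying $K_{\parpar}[G]$. The key point is the classical fact (from \cite{DokuExelPicc2000} and related work, e.g. \cite{AB}) that $K_{\parpar}[G]$ is isomorphic to a partial skew group algebra $K_X \star_{\beta} G$, where $K_X$ is the (commutative, associative) algebra of functions on the spectrum of the idempotents generating $K_{\parpar}[G]$, and $\beta$ is the partial action of $G$ on $K_X$ induced by the universal partial action. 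Since this partial action need not be global, the first substantive step is to pass to a globalization or, more directly, to observe that the associated groupoid $\overline{G}$ of the partial action (in the sense of \cite{AB}, or via the construction in the proof of Proposition \ref{ONLYIF2}) makes $K_{\parpar}[G]$ into an honest groupoid ring over $K$.

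More concretely, here is the route I would take. First I would recall that $K_{\parpar}[G]$ has a $K$-basis indexed by pairs $(s, X)$ where $s \in G$ and $X$ is an admissible subset of $G$ with $s \in X$ and $1 \in X$, with a multiplication that is ``partial'' exactly in the sense that two basis elements multiply to zero unless a compatibility condition holds. This combinatorial description is precisely that of a groupoid ring $R[\mathcal{H}]$ over $K$ in the sense of Section \ref{sec:AppMix}: the objects of $\mathcal{H}$ are the admissible subsets $X \ni 1$, and the morphisms from $X$ to $Y$ are the elements $s \in G$ with $sX = Y$ (suitably interpreted), with $R_e = K$ for every object $e$. One checks that this category is a groupoid (every morphism $s$ has inverse $s^{-1}$) and that the resulting groupoid ring, with the identity partial action $\alpha_g = \id$, reproduces the multiplication of $K_{\parpar}[G]$. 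Then Theorem \ref{groupoidring} applies directly with all $R_e = K$ left (and right) artinian: $K_{\parpar}[G]$ is artinian if and only if $\mathrm{mor}(\mathcal{H})$ is finite.

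The final step is to translate finiteness of $\mathrm{mor}(\mathcal{H})$ into finiteness of $G$. One direction is immediate: if $G$ is finite, then there are only finitely many admissible subsets and finitely many elements, so $\mathrm{mor}(\mathcal{H})$ is finite, hence $K_{\parpar}[G]$ is artinian. Conversely, if $G$ is infinite, I would embed a copy of $G$ into $\mathrm{mor}(\mathcal{H})$ — for instance, the map $s \mapsto (s, \{1, s, s^{-1}, \dots\})$ or, cleaner, using that for each $s \in G$ there is at least one object $X$ with $s X$ an object and $s$ a morphism $X \to sX$ — so that $\mathrm{mor}(\mathcal{H})$ is infinite and Theorem \ref{groupoidring} forces $K_{\parpar}[G]$ to not be artinian.

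\textbf{Main obstacle.} The delicate part will be pinning down the precise groupoid $\mathcal{H}$ so that $R[\mathcal{H}]$ is literally (not just Morita-equivalently) isomorphic to $K_{\parpar}[G]$, and verifying that the equivalence relation $\sim$ of Section \ref{sec:AppMix} behaves correctly so that only one copy of $K$ is needed per class; there is also the bookkeeping point that one must confirm $\mathrm{mor}(\mathcal{H})$ finite $\iff$ $G$ finite rather than merely $\iff$ ``$G$ finite up to the redundancy in admissible subsets.'' If extracting $\mathcal{H}$ cleanly from $K_{\parpar}[G]$ proves awkward, the fallback is to invoke the standard identification $K_{\parpar}[G] \cong K_X \star_\beta G$ as a partial skew group ring with $\beta$ \emph{unital} (the idempotents being central), and then apply Theorem \ref{ParkGeneral} after passing to the associated global action on $\overline{G}$ as in the proof of Proposition \ref{ONLYIF2}, reducing once more to $\mathrm{mor}(\overline{G})$ finite $\iff$ $G$ finite.
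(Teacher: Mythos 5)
Your main route has a genuine gap in the ``only if'' direction. The description of $K_{\parpar}[G]$ as a groupoid ring over $K$ --- with a basis of pairs $(s,X)$ multiplying to zero unless a compatibility condition holds --- is only valid when $G$ is \emph{finite}. For general $G$, the natural spanning set of $K_{\parpar}[G]$ consists of elements $[s]e_{t_1}\cdots e_{t_n}$ with $e_t=[t][t^{-1}]$, and the product of two such elements is \emph{never} zero (e.g. $[s][t]=e_s[st]$); the orthogonal idempotents needed to diagonalize the commutative subalgebra generated by the $e_t$ are the products $\prod_{t\in X}e_t\prod_{t\notin X}(1-e_t)$, which are infinite products that do not exist in $K_{\parpar}[G]$ when $G$ is infinite. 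This is precisely why \cite[Corollary 2.7]{DokuExelPicc2000} is stated for finite $G$, and why the paper uses it only for the ``if'' direction (where your argument agrees with the paper's: $G$ finite gives a finite Brandt groupoid $\Gamma$, and Theorem \ref{groupoidring} applies to $K[\Gamma]$). For infinite $G$ you cannot set up the groupoid $\mathcal{H}$ at all, so Theorem \ref{groupoidring} cannot be invoked to rule out artinianity.

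Your fallback is essentially the paper's actual proof of the ``only if'' direction, but as stated it still needs repair. The paper invokes \cite[Theorem 6.9]{DokuchaevExel05} to write $K_{\parpar}[G]$ as a unital partial skew group ring $\mathcal{A}\rtimes_{\alpha^{\tilde{\pi}}}G$ and then applies the main theorem for unital partial actions directly (since $\mathcal{A}$ is commutative, hence alternative, Theorem \ref{ParkGeneralAlternative} is the correct tool; no globalization or passage to $\overline{G}$ is needed). Your suggestion to ``pass to the associated global action on $\overline{G}$ as in the proof of Proposition \ref{ONLYIF2}'' does not work here, because that construction requires the coefficient ring to be semisimple (a finite direct sum of simple rings), which the infinite-dimensional idempotent-spanned algebra $\mathcal{A}$ is not. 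Finally, you omit the step that actually finishes the argument: each domain $D_g$ contains the nonzero idempotent $[g][g^{-1}]$, so ``$D_g=\{0\}$ for all but finitely many $g$'' forces $G$ to be finite. Without observing that every $D_g$ is nonzero, the conclusion does not follow.
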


\begin{proof}
We first show the ``if'' statement.
Using that $G$ is a finite group we get that the Brandt groupoid of $G$, denoted by $\Gamma$,
is finite.
By \cite[Corollary 2.7]{DokuExelPicc2000}, the partial group algebra $K_{\parpar}[G]$
is isomorphic (as a $K$-algebra) to the groupoid algebra $K[\Gamma]$.
The desired conclusion now follows immediately from Theorem \ref{groupoidring}.

Now we show the ``only if'' statement.
By \cite[Theorem 6.9]{DokuchaevExel05}, $K_{\parpar}[G]$ is isomorphic (as a $K$-algebra)
to a certain partial skew group ring $\mathcal{A} \rtimes_{\alpha^{\tilde{\pi}}} G$
associated to a partial action $\{\{\alpha^{\tilde{\pi}}_g\}_{g\in G}, \{ D_g\}_{g\in G} \}$.
For each $g\in G$, $D_g$ contains the (non-zero) identity element $[g][g^{-1}]$ (where $[g],[g^{-1}]$ are elements of Exel's semigroup).
Hence, if $K_{\parpar}[G]$ is artinian, then, by Theorem \ref{ParkGeneral}, $D_g =\{0\}$ for all but finitely many $g\in G$.
This shows that $G$ is finite.
\end{proof}

\begin{remark}
The ``only if'' statement of Corollary \ref{cor:PGA} can also be obtained through Zelmanov's theorem \cite{zelmanov1977}.
Indeed, $K_{\parpar}[G]$ can be viewed as a semigroup algebra by Exel's semigroup. 
\end{remark}

\section{Applications to Leavitt path algebras}\label{sec:LPA}

A directed graph $E=(E^0,E^1,r,s)$ consists of two countable sets $E^0$, $E^1$ and maps $r,s : E^1 \to E^0$. The elements of $E^0$ are called \emph{vertices} and the elements of $E^1$ are called \emph{edges}. If both $E^0$ and $E^1$ are finite sets, then we say that $E$ is \emph{finite}.
A vertex which emits no edge is called a \emph{sink}. A vertex $v\in E^0$ such that $|s^{-1}(v)|=\infty$ is called an \emph{infinite emitter}.
A \emph{path} $\mu$ in 
$E$ is a sequence of edges $\mu = \mu_1 \ldots \mu_n$ such that $r(\mu_i)=s(\mu_{i+1})$ for $i\in \{1,\ldots,n-1\}$. In such a case, $s(\mu):=s(\mu_1)$ is the \emph{source} of $\mu$, $r(\mu):=r(\mu_n)$ is the \emph{range} of $\mu$ and $n$ is the \emph{length} of $\mu$.
Recall that a path $\mu$ is called a \emph{cycle} if $s(\mu)=r(\mu)$ and $s(\mu_i)\neq s(\mu_j)$ for every $i\neq j$.
A graph $E$ without cycles is said to be \emph{acyclic}.

\begin{defi}[Leavitt path algebra \cite{AAP}]
Let $E$ be any directed graph and let $K$ be any field.
The \emph{Leavitt path $K$-algebra $L_K(E)$ of $E$ with coefficients in $K$} is the
$K$-algebra generated by a set $\{v \mid v\in E^0\}$ of pairwise orthogonal idempotents, together with a set of variables $\{f \mid f\in E^1\} \cup
\{f^* \mid f\in E^1\}$, which satisfy the following  relations:
\begin{enumerate}
\item $s(f)f=fr(f)=f$, for all $f\in E^1$;
\item $r(f)f^*=f^*s(f)=f^*$, for all $f\in E^1$;
\item $f^*f'=\delta_{f,f'}r(f)$, for all $f,f'\in E^1$;
\item $v=\sum_{ \{ f\in E^1 \mid s(f)=v \} } ff^*$, for every $v\in E^0$ for which $s^{-1}(v)$ is non-empty and finite.
\end{enumerate}
\end{defi}

In \cite{GR} D. Gon\c{c}alves and D. Royer have shown that the Leavitt path algebra $L_K(E)$ is isomorphic (as a $K$-algebra) to a partial skew group ring $D(X) \star_\alpha \F$, where $D(X)$ is a certain commutative $K$-algebra and $\F$ is the free group generated by $E^1$.
For the benefit of the reader we shall briefly recall the construction of $D(X) \star_\alpha \F$.

Let $W$ denote the set of all finite paths in $E$, and let $W^{\infty}$ denote the set of all infinite paths in $E$.
Let $\F$ denote the free group genereated by $E^1$.
We are going to define a partial action of $\F$ on the set
\begin{displaymath}
	X=\{\xi\in W \mid r(\xi) \text{ is a sink }\}\cup \{v\in E^0 \mid v\text{ is a sink }\}\cup W^{\infty}.
\end{displaymath}
For each $g\in \F$, let $X_g$ be defined as follows:
\begin{itemize}
\item $X_e:=X$, where $e$ is the identity element of $\F$. 

\item $X_{b^{-1}}:=\{\xi\in X \mid s(\xi)=r(b)\}$, for all $b\in W$. 

\item $X_a:=\{\xi\in X \mid \xi_1\xi_2...\xi_{|a|}=a\}$, for all $a\in W$.

\item $X_{ab^{-1}}:=\{\xi\in X \mid \xi_1\xi_2...\xi_{|a|}=a\}=X_a$, for $ab^{-1}\in \F$ with $a,b\in W$, $r(a)=r(b)$ and $ab^{-1}$ in its reduced form.

\item $X_g:=\emptyset$, for all other $g \in \F$.

\end{itemize}

Let  $\theta_e:X_e\rightarrow X_e$ be the identity map. For $b\in W$,  $\theta_b:X_{b^{-1}}\rightarrow X_b$ is defined by $\theta_b(\xi)=b\xi$ and $\theta_{b^{-1}}:X_b\rightarrow X_{b^{-1}}$ by
$\theta_{b^{-1}}(\eta)= \eta_{|b|+1}\eta_{|b|+2}\ldots$ if $r(b)$ is not a sink, and by $\theta_{b^{-1}}(b)=r(b)$ if $r(b)$ is a sink. Finally, for $a,b\in W$ with $r(a)=r(b)$ and $ab^{-1}$ in reduced form, $\theta_{ab^{-1}}:X_{ba^{-1}}\rightarrow X_{ab^{-1}}$ is defined by $\theta_{ab^{-1}}(\xi)=a\xi_{(|b|+1)}\xi_{(|b|+2)}\ldots$, with inverse 
$\theta_{ba^{-1}}:X_{ab^{-1}}\rightarrow X_{ba^{-1}}$ defined by $\theta_{ba^{-1}} (\eta)=b\eta_{(|a|+1)}\eta_{(|a|+2)}\ldots$ .

One may easily verify that $\{\{X_g\}_{g\in \F}, \{\theta_g\}_{g\in\F}\}$ is a partial action on the set level which induces a partial action on the algebra level.
Indeed, we may define a partial action $\{\{F(X_g)\}_{g\in \F}, \{\alpha_g\}_{g\in\F}\}$, where, for each $g\in \F$, $F(X_g)$ denotes the algebra of all functions from $X_g$ to $K$, and $\alpha_g:F(X_{g^{-1}})\rightarrow F(X_g)$ by putting $\alpha_g(f)=f\circ \theta_{g^{-1}}$.
Based on this partial action, we define another partial action in the following way:

For each $g\in \F$, and for each $v\in E^0$, define the characteristic maps $1_g:=\chi_{X_g}$ and $1_v:=\chi_{X_v}$, where $X_v=\{\xi\in X \mid s(\xi)=v\}$. Notice that $1_g$ is the identity element of $F(X_g)$. Finally, let
\begin{displaymath}
	D(X)=D_e=\Kspan\{\{1_g \mid g\in \F\setminus\{0\}\}\cup\{1_v \mid v\in E^0\}\},
\end{displaymath}
(where $\Kspan$ means the $K$-linear span) and, for each $g\in \F\setminus\{0\}$, let $D_g\subseteq F(X_g)$ be defined as $1_g D_e$, that is,
\begin{displaymath}
	D_g=\Kspan\{1_g1_h \mid h\in \F\}.
\end{displaymath}
By \cite[Lemma 2.4]{GR}, $D(X)$ is a $K$-algebra and $D_g$, for $g\in \F$, is an ideal of $D(X)$.
Using that $\alpha_g(1_{g^{-1}}1_h)=1_g1_{gh}$ (see \cite[Lemma 2.6]{GR}), for each $g\in \F$, 
it is clear that the restriction of $\alpha_g$ to $D_{g^{-1}}$ is a bijection onto $D_g$.
By abuse of notation this restriction map will also be denoted by $\alpha_g$.
Clearly, $\alpha_{g}:D_{g^{-1}}\rightarrow D_g$ is an isomorphism of $K$-algebras and, furthermore, $\{\{\alpha_g\}_{g\in \F}, \{D_g\}_{g\in \F}\}$ is a partial action. By \cite[Proposition 3.2]{GR} the map $\varphi : L_K(E) \to D(X) \star_\alpha \F$
defined by $\varphi(f)=1_f \delta_f$, $\varphi(f^*)=1_{f^{-1}}\delta_{f^{-1}}$, for all $f\in E^1$, and $\varphi(v)=1_v \delta_e$, for all $v\in E^0$, is an isomorphism of $K$-algebras.

\begin{teo}\label{LPAthm}
Let $K$ be a field and let $E$ be a directed graph.
Consider $L_K(E)$, the Leavitt path $K$-algebra of $E$ with coefficients in $K$.
The following five assertions are equivalent:
\begin{enumerate}[{\rm (i)}]
	\item\label{LPAthm:Econd} $E$ is finite and acyclic;
	\item\label{LPAthm:leftLPAartin} $L_K(E)$ is left artinian;
	\item\label{LPAthm:rightLPAartin} $L_K(E)$ is right artinian;
	\item\label{LPAthm:LPAartin} $L_K(E)$ is artinian;
	\item\label{LPAthm:LPAuss} $L_K(E)$ is unital and semisimple.
\end{enumerate}
\end{teo}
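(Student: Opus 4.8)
The plan is to prove the cycle of implications
\eqref{LPAthm:Econd} $\Rightarrow$ \eqref{LPAthm:LPAuss} $\Rightarrow$ \eqref{LPAthm:LPAartin} $\Rightarrow$ \eqref{LPAthm:leftLPAartin} $\Rightarrow$ \eqref{LPAthm:Econd}, together with the analogous implication \eqref{LPAthm:rightLPAartin} $\Rightarrow$ \eqref{LPAthm:Econd}; the implications \eqref{LPAthm:LPAartin} $\Rightarrow$ \eqref{LPAthm:leftLPAartin} and \eqref{LPAthm:LPAartin} $\Rightarrow$ \eqref{LPAthm:rightLPAartin} are trivial by definition of artinian, and \eqref{LPAthm:LPAuss} $\Rightarrow$ \eqref{LPAthm:LPAartin} is standard (a semisimple unital associative ring is a finite product of matrix rings over division rings, hence two-sided artinian). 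The substantive work is therefore in the two ``\eqref{LPAthm:Econd} implies good behaviour'' directions and in the two converses. For \eqref{LPAthm:Econd} $\Rightarrow$ \eqref{LPAthm:LPAuss}: when $E$ is finite and acyclic, $L_K(E)$ is unital with $1 = \sum_{v \in E^0} v$, and it is a classical fact about Leavitt path algebras of finite acyclic graphs (which one may either cite or derive via the isomorphism $\varphi$ above) that $L_K(E) \cong \prod_{v \text{ sink}} M_{n_v}(K)$, where $n_v$ is the number of paths ending at $v$; since $E$ is finite and acyclic each such number is finite, so the product is finite and $L_K(E)$ is semisimple.

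For the converses, the idea is to use the isomorphism $\varphi\colon L_K(E) \xrightarrow{\sim} D(X) \star_\alpha \mathbb{F}$ recalled just above the statement, and to feed this into Theorem~\ref{ParkGeneral}. First one must check that the partial action $\{\{D_g\}_{g \in \mathbb{F}}, \{\alpha_g\}_{g \in \mathbb{F}}\}$ is unital: the relevant non-zero ideals $D_g$ have the idempotent $1_g$ as identity (as noted in the construction), so $\alpha$ is unital in the sense of the paper. Hence if $L_K(E)$ is left (or right) artinian, then so is $D(X) \star_\alpha \mathbb{F}$, and Theorem~\ref{ParkGeneral} applies in its ``only if'' direction — but note Theorem~\ref{ParkGeneral} is stated for \emph{global} actions, so one should instead invoke Proposition~\ref{ONLYIF1} and Proposition~\ref{ONLYIF2} (via the reduction through $\overline{D(X)} = D(X)/\mathcal{S}(D(X))$ exactly as in the proof of Theorem~\ref{ParkGeneral}, which only needs $\alpha$ unital) to conclude that $D(X)$ is left/right artinian and that $D_g = \{0\}$ for all but finitely many $g \in \mathbb{F}$.

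It remains to translate these two ring-theoretic conclusions into graph-theoretic statements about $E$. The condition ``$D_g = \{0\}$ for all but finitely many $g \in \mathbb{F}$'' should force $W$, the set of finite paths in $E$, to be finite: indeed $D_g \neq \{0\}$ whenever $g = ab^{-1}$ with $a,b \in W$, $r(a) = r(b)$ (in particular whenever $g = b^{-1}$ or $g = a$ for $a,b \in W$), and distinct paths give distinct such group elements, so finiteness of $\{g : D_g \neq 0\}$ forces $W$ to be finite; a graph with finitely many finite paths must be finite (otherwise it has infinitely many vertices or edges, each giving a distinct path of length $0$ or $1$) and acyclic (a cycle generates infinitely many finite paths by repetition). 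Conversely, if $E$ is not finite or has a cycle then $W$ is infinite and infinitely many $D_g$ are non-zero, and one checks that $D(X)$ also fails to be artinian in the presence of a cycle or of an infinite emitter; combining, we get \eqref{LPAthm:leftLPAartin} (resp. \eqref{LPAthm:rightLPAartin}) $\Rightarrow$ \eqref{LPAthm:Econd}.

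The main obstacle I expect is the careful bookkeeping in the last paragraph: one must be precise about which group elements $g \in \mathbb{F}$ have $D_g \neq \{0\}$ (the reduced-form condition $r(a)=r(b)$ matters), and one must rule out the degenerate possibility that $E$ is infinite while still having only finitely many non-zero $D_g$ — this cannot happen because every vertex $v$ contributes the non-zero ideal $D_v$ and every edge $f$ contributes $D_f \neq \{0\}$, but the argument should be spelled out. A secondary subtlety is that the reduction to the semisimple quotient $\overline{D(X)}$ used in the Theorem~\ref{ParkGeneral} argument requires $D(X)$ to be associative and the ideals $D_g$ to be unital; both hold here since $D(X)$ is a commutative $K$-algebra and each non-zero $D_g$ has identity $1_g$, so Proposition~\ref{ONLYIF2} is genuinely applicable.
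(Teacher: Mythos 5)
Your overall architecture is sound and, for the converse directions, close to the paper's: the paper likewise passes to $D(X)\star_\alpha\F$ and applies Theorem~\ref{ParkGeneralAlternative} (which you could invoke directly, since $D(X)$ is commutative, hence alternative, and each non-zero $D_g$ is unital with identity $1_g$, rather than re-running Propositions~\ref{ONLYIF1} and~\ref{ONLYIF2} by hand). Your implication \eqref{LPAthm:Econd}$\Rightarrow$\eqref{LPAthm:LPAuss} via the classical decomposition of $L_K(E)$, for $E$ finite and acyclic, into a finite product of matrix rings over $K$ is a genuinely different (and legitimate, if externally cited) route; the paper instead proves \eqref{LPAthm:Econd}$\Rightarrow$\eqref{LPAthm:leftLPAartin} by showing that $X$ is finite, so that $D(X)$ is finite-dimensional and only finitely many $D_g$ are non-zero, and only obtains \eqref{LPAthm:LPAuss} at the very end from semiprimitivity of $L_K(E)$.

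There is, however, a genuine gap in your argument that \eqref{LPAthm:leftLPAartin} forces $E^0$ to be finite. You claim that ``every vertex $v$ contributes the non-zero ideal $D_v$'', but the ideals $D_g$ are indexed by elements $g$ of the free group $\F$ generated by $E^1$, and a vertex is not such an element; there is no $D_v$ among them (only the idempotent $1_v$, which lives inside $D_e=D(X)$). Consequently the asserted implication ``$E$ infinite $\Rightarrow$ infinitely many non-zero $D_g$'' is false: a graph consisting of infinitely many isolated vertices has $D_g=\{0\}$ for every $g\neq e$, yet is infinite and has non-artinian $L_K(E)$. Finiteness of $E^0$ therefore cannot be extracted from the conclusion of Proposition~\ref{ONLYIF2}; it must come from artinianity of the ring itself, e.g. from the strictly descending chain of left ideals $\oplus_{v\in E^0\setminus\{v_1,\ldots,v_n\}}L_K(E)v$ used in the paper, or equivalently from the observation that infinitely many non-zero orthogonal idempotents $1_v$ would contradict the artinianity of $D(X)=D_e$ that Proposition~\ref{ONLYIF1} provides. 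Your treatment of $E^1$ is fine ($X_f\neq\emptyset$ for every edge $f$, so each edge does give a distinct non-zero $D_f$, which also disposes of infinite emitters), as is the acyclicity argument via powers of a cycle.
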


\begin{proof}
Throughout this proof we are going to make use of the fact that
the Leavitt path algebra $L_K(E)$ is isomorphic to the partial skew group ring $D(X) \star_\alpha \F$, as described above.

\eqref{LPAthm:Econd}$\Rightarrow$\eqref{LPAthm:leftLPAartin}:
If we can show that $D(X)$ is left artinian and that $D_g = \{0\}$ for all but finitely many $g\in \mathbb{F}$, then it follows by Theorem \ref{ParkGeneral} that $D(X) \star_\alpha \mathbb{F}$ is an artinian ring.

By the finiteness and acyclicity of $E$
we immediately conclude that there are no infinite paths in $E$
and that $W$, the set of all finite paths in $E$, is a finite set.
Hence, using the notation of \cite{GR}
we see that $X=\{\xi \in W \mid r(\xi) \text{ is a sink}\} \cup \{v\in E^0 \mid v \text{ is a sink}\}$
is a finite set.
Using that $W$ is finite, we conclude that $X_g = \emptyset$ for all but finitely many $g\in \mathbb{F}$.
Hence, $1_g =0$ for all but finitely many $g\in \mathbb{F}$. Using this, we immediately conclude that $D_g=\{0\}$ for all but finitely many $g\in \mathbb{F}$, and moreover, we notice that it turns $D(X)$ into a finite-dimensional $K$-vector space.
Hence, $D(X)$ is artinian.

\eqref{LPAthm:leftLPAartin}$\Rightarrow$\eqref{LPAthm:Econd}:
Suppose that $L_K(E) \cong D(X) \star_\alpha \F$ is left artinian.
By Theorem \ref{ParkGeneralAlternative}, we get that $D_g=\{0\}$ for all but finitely many $g\in \F$.

We claim that there can be no infinite path in $E$, i.e. $W^\infty = \emptyset$.
Clearly, every cycle gives rise to an infinite path. Thus, if we assume that the claim holds, then $E$ must be acyclic.
Now we prove the claim.
Seeking a contradiction, suppose that $E$ contains an infinite path $\xi = \xi_1 \xi_2 \xi_3 \ldots$.
By taking (finite) initial subpaths of $\xi$, which are elements of $W$ and hence also of $\F$,
we can form an infinite chain of nested subsets
\begin{displaymath}
	X_{\xi_1} \supseteq X_{\xi_1 \xi_2} \supseteq X_{\xi_1 \xi_2 \xi_3} \supseteq \ldots
\end{displaymath}
which are all non-empty since they contain $\xi$.
Thus, the ideals
\begin{displaymath}
	D_{\xi_1} , \quad D_{\xi_1 \xi_2} , \quad D_{\xi_1 \xi_2 \xi_3}, \quad \ldots
\end{displaymath}
are all non-zero. Hence, $D_g \neq \{0\}$ for infinitely many $g\in \F$.
This is a contradiction.

Now we show that $E$ must be finite. Seeking a contradiction, suppose that $E^0=\{v_1,v_2,v_3,\ldots\}$ is infinite.
Notice that $L_K(E)=\oplus_{v\in E^0} L_K(E)v$
and consider the following
descending chain of left ideals of $L_K(E)$.
\begin{displaymath}
	L_K(E) \supseteq \oplus_{v\in E^0 \setminus \{v_1\}} L_K(E)v \supseteq \oplus_{v\in E^0 \setminus \{v_1,v_2\}} L_K(E)v \supseteq \oplus_{v\in E^0 \setminus \{v_1,v_2,v_3\}} L_K(E)v \supseteq \ldots
\end{displaymath}
It is easy to see that the above chain never stabilizes,
using the fact that every pair of vertices in $E^0$ are orthogonal idempotents.
Hence, $L_K(E)$ is not left artinian. This is a contradiction.
We conclude that $E^0$ is finite.

We now proceed to show that $E^1$ must be finite.
Using that $E^0$ is finite, it is sufficient to show that $E^0$ contains no infinite emitter.
Seeking a contradiction, suppose that there is a vertex $v\in E^0$
which is an infinite emitter.
Since $E^0$ is finite, there must exist some $u\in E^0$ such that
the set $I=\{e \in E^1 \mid s(e)=v \text{ and } r(e)=u\}$ is infinite.

{\bf Case 1:} ($u$ is a sink)\\
Take $e\in I \subseteq W$ and consider the set
$X_{e^{-1}}=\{\xi \in X \mid s(\xi)=u\}$ which is non-empty since it contains
$u$.
Hence, $D_{e^{-1}}$ is non-zero for (infinitely many) $e\in I$.
This is a contradiction.

{\bf Case 2:} ($u$ is not a sink)\\
We have already shown that $E$ contains no infinite path, i.e. $W^\infty=\emptyset$.
Hence, there must exist at least one path from $u$ to a sink $w$. Let us call it $\eta$.
Take $e\in I$ and consider the set
$X_{e^{-1}}=\{\xi \in X \mid s(\xi)=u\}$ which is non-empty since it contains
$\eta$.
Hence, $D_{e^{-1}}$ is non-zero for (infinitely many) $e\in I$.
This is a contradiction.

This shows that $E^1$ (and hence also $E$) is finite.

\eqref{LPAthm:leftLPAartin}$\Leftrightarrow$\eqref{LPAthm:rightLPAartin}$\Leftrightarrow$\eqref{LPAthm:LPAartin}:
This follows immediately from Theorem \ref{ParkGeneralAlternative}, using the fact that $D(X)$ is commutative.

\eqref{LPAthm:LPAartin}$\Rightarrow$\eqref{LPAthm:LPAuss}: By \cite[Proposition 6.3]{AAP} we know that $L_K(E)$ is semiprimitive, i.e. $J(L_K(E))=\{0\}$. Hence, if $L_K(E)$ is left artinian it must also be semisimple (see e.g. \cite[Theorem 4.14]{LAM}).
Using that \eqref{LPAthm:LPAartin}$\Leftrightarrow$\eqref{LPAthm:Econd} we get that $E$ is finite and hence, in particular, $L_K(E)$ is unital.

\eqref{LPAthm:LPAuss}$\Rightarrow$\eqref{LPAthm:LPAartin}: This is clear.
\end{proof}

\begin{remark}
The essence of the above result has previously been shown in \cite{AAPSM}
using a different technique.
\end{remark}

Recall that a subset $H\subseteq E^0$ is said to be \emph{hereditary} if for any $e\in E^1$ we have that $s(e)\in H$ implies $r(e)\in H$. A hereditary subset $H\subseteq E^0$ is called \emph{saturated} if whenever $0<\# s^{-1}(v) < \infty$, then $\{r(e)\in H \mid e\in E^1 \text{ and } s(e)=v\} \subseteq H$ implies $v\in H$.

\begin{cor}\label{cor:finacyclic}
If $E$ is a finite and acyclic graph such that
the only hereditary and saturated subsets of $E^0$ are $\emptyset$ and $E^0$,
then the Leavitt path algebra $L_K(E)$ is a simple unital artinian ring.
\end{cor}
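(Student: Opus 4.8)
The plan is to combine the already-established equivalence \eqref{LPAthm:Econd} $\Leftrightarrow$ \eqref{LPAthm:LPAuss} of Theorem \ref{LPAthm} with a simplicity criterion for Leavitt path algebras. First I would invoke Theorem \ref{LPAthm}: since $E$ is finite and acyclic, $L_K(E)$ is unital and semisimple, hence in particular artinian and unital. It remains to upgrade ``semisimple'' to ``simple'', and this is where the hypothesis on hereditary and saturated subsets enters.

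The key fact I would use is the standard characterization of simplicity of a Leavitt path algebra: $L_K(E)$ is simple if and only if the only hereditary and saturated subsets of $E^0$ are $\emptyset$ and $E^0$ \emph{and} $E$ satisfies Condition (L), i.e. every cycle in $E$ has an exit (see \cite{AAP} or \cite{LAM}). Since $E$ is acyclic by hypothesis, Condition (L) is satisfied vacuously — there are no cycles at all — so the simplicity criterion reduces precisely to the stated hypothesis that $\emptyset$ and $E^0$ are the only hereditary and saturated subsets of $E^0$. Therefore $L_K(E)$ is simple. Combined with the conclusion of the previous paragraph that $L_K(E)$ is unital and artinian, we conclude that $L_K(E)$ is a simple unital artinian ring, as claimed.

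I expect the main (and only) subtlety to be the proper citation and statement of the simplicity criterion, together with the observation that acyclicity makes Condition (L) automatic; no genuine computation is needed, since everything else has already been assembled in Theorem \ref{LPAthm}. One could alternatively argue directly: by Theorem \ref{LPAthm} write $L_K(E) \cong A_1 \oplus \cdots \oplus A_m$ as a finite direct sum of simple (unital artinian) rings, and then use the lattice isomorphism between ideals of $L_K(E)$ and hereditary saturated subsets of $E^0$ (valid here because $E$ is acyclic, hence satisfies Condition (L)) to force $m = 1$; the hypothesis says this lattice has exactly two elements, which corresponds to $L_K(E)$ having exactly two ideals, i.e. being simple.
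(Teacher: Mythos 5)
Your proposal is correct and follows essentially the same route as the paper: acyclicity makes Condition (L) hold vacuously, the standard simplicity criterion for Leavitt path algebras then yields simplicity from the hypothesis on hereditary and saturated subsets, and Theorem \ref{LPAthm} supplies unitality and artinianity. The only quibble is the citation: the paper invokes \cite[Theorem 3.5]{GOR2014} for the simplicity criterion rather than \cite{AAP} or \cite{LAM} (the latter is not a reference on Leavitt path algebras), but this does not affect the mathematics.
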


\begin{proof}
Since $E$ is acyclic it trivially satisfies condition (L), see \cite{GOR2014}.
By \cite[Theorem 3.5]{GOR2014} we conclude that $L_K(E)$ is simple.
We get from Theorem \ref{LPAthm} that $L_K(E)$ is a simple unital artinian ring.
\end{proof}

\section{Morita equivalence, globalization, Noetherianity and Maschke-type results}\label{sec:Morita}

In this section, we use globalization and Morita 
equivalence to deduce necessary and sufficient conditions
for the artinianity, noetherianity and two
Maschke-type reults.
We adapt the approach taken in \cite{FL}
to the groupoid situation.
To this end, we recall the following.

\begin{defi}
Following \cite{BP}, we say that a global action 
$\bt = \{ \bt_g : T_{g^{-1}} \rightarrow T_g \}_{g\in \mor(G)}$ 
of a groupoid $G$ on an associative ring $T$ is
a {\it globalization} of a partial action 
$\af = \{ \af_g : R_{g^{-1}} \rightarrow R_g \}_{g\in \mor(G)}$ 
of $G$ on an associative ring $R$ if, for each $e\in \ob(G)$, 
there exists a ring monomorphism $\psi_e : R_e \to T_e$ such that,
for every $g \in {\rm mor}(G)$ and every $a \in R_{g^{-1}}$,
the following hold:
\begin{enumerate}\renewcommand{\theenumi}{\roman{enumi}}   \renewcommand{\labelenumi}{(\theenumi)}
	\item $\psi_e(R_e)\unlhd T_e$, i.e. $\psi_e(R_e)$ is a two-sided ideal of $T_e$;
	\item $\psi_{c(g)}(R_{g})=\psi_{c(g)}(R_{c(g)})\cap\bt_g(\psi_{d(g)}(R_{d(g)}))$;
	\item $\bt_g(\psi_{d(g)}(a))=\psi_{c(g)}(\af_g(a))$;
	\item and $T_g=\sum\limits_{c(h)=c(g)}\bt_h(\psi_{d(h)}(R_{d(h)}))$.
\end{enumerate}
\end{defi}

\begin{remark}
Throughout this section, let 
$\af = \{ \af_g : R_{g^{-1}} \rightarrow R_g \}_{g\in \mor(G)}$ 
be a unital partial action of groupoid $G$ on a ring $R$. 
It follows from \cite[Theorem 2.1]{BP} that $\af$ admits a globalization 
$\bt = \{ \bt_g : T_{g^{-1}} \rightarrow T_g \}_{g\in \mor(G)}$.
From \cite[Remark 2.3]{BP}, we may, for the rest of the section, 
fix $\alpha$ and $\beta$ so that 
$\psi_e=\id_{R_e}$. Then, $R_e\unlhd T_e$ and $T_e1_{R_e}=R_e,$ for any $e\in \ob(G).$ 
We shall also assume that $T = \bigoplus_{e\in \ob(G)}T_e.$
\end{remark}

\begin{defi}
We say that $\alpha$ is \emph{of finite type}, if for any  
$e\in \ob(G)$ there are $g_1,\ldots, g_n\in G(-,e)$ such that 
$R_{c(g)}=\sum_{i=1}^n R_{gg_i},$ for any $g\in G(e,-),$ where
$G(-,e)=\{g\in G\mid c(g)=e\}$ and $G(e,-)=\{g\in G\mid d(g)=e\}.$
\end{defi}

\begin{prop}\label{fintype}
The following three assertions are equivalent:
\begin{enumerate}[{\rm (i)}]
\item \label{ft}$\alpha$ is of finite type;
\item \label{finsum} For any $e\in \ob(G),$ there exists $g_1,\ldots, g_n\in G(-,e)$ such that $T_e=\sum\limits_{i=1}^n\bt_{g_i}(R_{d(g_i)});$
\item \label{unit}For any $e\in \ob(G),$ the ring $T_e$ is unital.
\end{enumerate}
\end{prop}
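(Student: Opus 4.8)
The plan is to establish the cycle of implications $\eqref{ft} \Rightarrow \eqref{finsum} \Rightarrow \eqref{unit} \Rightarrow \eqref{ft}$. First I would treat $\eqref{ft} \Rightarrow \eqref{finsum}$: fix $e \in \ob(G)$ and let $g_1, \ldots, g_n \in G(-,e)$ be the morphisms witnessing finite type. Taking $g = \Id_e \in G(e,-)$ in the defining equation gives $R_e = R_{c(\Id_e)} = \sum_{i=1}^n R_{g_i}$ (using that $g g_i = g_i$ when $g = \Id_e$). Now apply axiom (iv) of the globalization: $T_e = \sum_{c(h)=c(g)} \beta_h(\psi_{d(h)}(R_{d(h)})) = \sum_{h \in G(-,e)} \beta_h(R_{d(h)})$. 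The task is to show this (a priori large) sum collapses to the finite subsum over $g_1, \ldots, g_n$. For an arbitrary $h \in G(-,e)$, one has $\beta_h(R_{d(h)}) \subseteq T_e$, and since $T_e 1_{R_e} = R_e$ with $R_e = \sum R_{g_i}$, I would argue using unitality of $\alpha$ that $\beta_h(R_{d(h)})$ is already absorbed into $\sum_i \beta_{g_i}(R_{d(g_i)})$; concretely, the element $1_{R_{g_i}}$ acts as a local identity and the relation $\beta_h(R_{d(h)}) = \beta_h(R_{d(h)}) \cdot 1_{R_e} = \sum_i \beta_h(R_{d(h)}) 1_{R_{g_i}}$ should, via property (ii) and the composition axiom (iii), land each summand inside $\beta_{g_i}(R_{d(g_i)})$. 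This is the step I expect to be the main obstacle, since it requires carefully tracking how the globalized ideals multiply and using that $R_{g_i} = R_e \cap \beta_{g_i}(R_{d(g_i)})$ from property (ii).

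For $\eqref{finsum} \Rightarrow \eqref{unit}$: given $T_e = \sum_{i=1}^n \beta_{g_i}(R_{d(g_i)})$ with each $R_{d(g_i)}$ unital (by unitality of $\alpha$, after discarding zero summands), each $\beta_{g_i}(R_{d(g_i)})$ is a unital ideal of $T_e$ — say with identity $u_i := \beta_{g_i}(1_{R_{d(g_i)}})$. A finite sum of unital (two-sided) ideals in an associative ring is again unital: one shows the $u_i$ generate a central idempotent by the standard argument that $u_i u_j = u_j u_i$ lies in the intersection of the two ideals and then inductively builds an identity, e.g. $e_1 = u_1$, $e_{k+1} = e_k + u_{k+1} - e_k u_{k+1}$, each $e_k$ being an identity for $\sum_{i \le k} \beta_{g_i}(R_{d(g_i)})$. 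Hence $T_e$ is unital. (If all the relevant $R_{d(g_i)}$ are zero then $T_e = \{0\}$, but our standing convention that identities are nonzero forces $R_e \neq \{0\}$, so this degenerate case does not arise for the objects under consideration; I would remark on this.)

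For $\eqref{unit} \Rightarrow \eqref{ft}$: suppose every $T_e$ is unital, with identity $1_{T_e}$. Fix $e \in \ob(G)$. By globalization axiom (iv), $1_{T_e} \in T_e = \sum_{h \in G(-,e)} \beta_h(R_{d(h)})$, so there is a finite subset $g_1, \ldots, g_n \in G(-,e)$ with $1_{T_e} \in \sum_{i=1}^n \beta_{g_i}(R_{d(g_i)})$, whence actually $T_e = \sum_{i=1}^n \beta_{g_i}(R_{d(g_i)})$. Now take any $g \in G(e,-)$; I must show $R_{c(g)} = \sum_{i=1}^n R_{g g_i}$. The inclusion $\supseteq$ follows since each $R_{g g_i}$ is an ideal of $R_{c(g)}$. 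For $\subseteq$, use $R_{c(g)} = T_{c(g)} 1_{R_{c(g)}}$ and express, via axiom (iv) again or via $\beta_g$, the relevant part of $T_{c(g)}$ through $\beta_g(T_e) = \beta_g\big(\sum_i \beta_{g_i}(R_{d(g_i)})\big) = \sum_i \beta_{g g_i}(R_{d(g_i)})$, applying the global cocycle identity $\beta_g \beta_{g_i} = \beta_{g g_i}$ valid since $\beta$ is global. Intersecting with $R_{c(g)}$ and using property (ii) (which identifies $R_{g g_i}$ as $R_{c(g)} \cap \beta_{g g_i}(R_{d(g_i)})$) then yields $R_{c(g)} = \sum_i R_{g g_i}$, using unitality of $\alpha$ to control the multiplication by $1_{R_{c(g)}}$. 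This closes the cycle.
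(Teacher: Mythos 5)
Your implications (ii)$\Rightarrow$(iii) and (iii)$\Rightarrow$(i) are sound and essentially match the paper's argument: for (ii)$\Rightarrow$(iii) the paper simply cites \cite[Lemma 4.4]{DokuchaevExel05} for the fact that a finite sum of unital ideals is unital, which is exactly the inductive $e_{k+1}=e_k+u_{k+1}-e_ku_{k+1}$ construction you spell out; for (iii)$\Rightarrow$(i) the paper writes $1_{T_e}=\sum_{i}\bt_{h_i}(r_i)$ and multiplies by $1_{R_{c(g)}}$, which is the element-level version of your ideal-level computation.

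However, your implication (i)$\Rightarrow$(ii) has a genuine gap, precisely at the step you yourself flag as the main obstacle. The identity $\bt_h(R_{d(h)})=\bt_h(R_{d(h)})\cdot 1_{R_e}$ is false in general: $\bt_h(R_{d(h)})$ is an ideal of $T_e$ that need not be contained in $R_e$, and multiplying by $1_{R_e}$ projects it into $R_e$; by globalization property (ii) one gets $\bt_h(R_{d(h)})\cdot 1_{R_e}=\bt_h(R_{d(h)})\cap R_e=R_h$, which is a proper subset of $\bt_h(R_{d(h)})$ unless $\bt_h(R_{d(h)})\subseteq R_e$. Indeed, if your identity held for every $h\in G(-,e)$, it would force $T_e=\sum_{h}\bt_h(R_{d(h)})\subseteq R_e$, i.e.\ the partial action would already be global. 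The underlying problem is that you only invoke the finite-type hypothesis at $g=\Id_e$, which gives information about $R_e$ alone, whereas the sum defining $T_e$ ranges over all $h\in G(-,e)$. The paper's proof instead applies the finite-type condition at $g=h^{-1}\in G(e,-)$ for each such $h$, obtaining $R_{d(h)}=\sum_{i=1}^{n}R_{h^{-1}g_i}$, and then uses property (ii) together with globality of $\bt$ to get $\bt_h(R_{h^{-1}g_i})\subseteq\bt_h\bt_{h^{-1}g_i}(R_{d(g_i)})=\bt_{g_i}(R_{d(g_i)})$; summing over $h$ then collapses $T_e$ to $\sum_{i=1}^{n}\bt_{g_i}(R_{d(g_i)})$. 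Your argument as written does not recover this, so the implication is not established.
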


\begin{proof}
\eqref{ft}$\Rightarrow$\eqref{finsum}:  Let  $e\in \ob(G).$ Then by (iv) in the definition of a globalization one has that  $T_e=\sum\limits_{c(h)=e}\bt_h(R_{d(h)}).$ Since  $\alpha$ is of finite type there are $g_1,\ldots, g_n\in G(-,e)$ such that $R_{d(h)}=\sum_{i=1}^{n} R_{h\m g_i},$ for each $h\in G$ with $c(h)=e.$ Then  $\bt_h(R_{d(h)})=\sum_{i=1}^{n}\bt_h( R_{h\m g_{i}}).$ From this we get that
\begin{align*}
\sum\limits_{i=1}^{n}\bt_{g_i}(R_{d(g_i)})\subseteq T_e=\sum\limits_{c(h)=e}\bt_h(R_{d(h)})=
\sum\limits_{c(h)=e}\sum\limits_{i=1}^{n}\bt_h( R_{h\m g_{i}})
&=\sum\limits_{i=1}^{n} \sum\limits_{c(h)=e}\bt_h( R_{h\m g_{i}}) \\
&\subseteq \sum\limits_{i=1}^{n}\bt_{g_i}(R_{d(g_i)}).
\end{align*}
using that
	$\bt_h( R_{h\m g_{i}})=\bt_h(R_{c(h\m g_i)})\cap \bt_h\bt_{h\m g_i}(R_{d(h\m g_i)}) \subseteq \bt_{g_i}(R_{d(g_i)}),$
for each $h\in G(-,e)$.
Since $ \sum\limits_{i=1}^{n}\bt_{g_i}(R_{d(g_i)})\subseteq T_e$  we conclude that $T_e=\sum\limits_{i=1}^{n}\bt_{g_i}(R_{d(g_i)}) ,$ as desired.

\eqref{finsum}$\Rightarrow$\eqref{unit} This follows from the fact that $T_e$ is a finite sum of unital rings (see \cite[Lemma 4.4]{DokuchaevExel05}).

\eqref{unit}$\Rightarrow$\eqref{ft} There are $h_1,\ldots, h_n\in G(-,e)$ such that $1_{T_e}=\sum\limits_{i=1}^{n}\bt_{h_i}(r_i),$ where $r_i\in R_{d(h_i)},$ for $i \in \{1,\ldots,n\}$. Thus, for any $g\in G(e,-)$ we get that $1_{T_{c(g)}}=\bt_g(1_{T_{e}})=\sum\limits_{i=1}^{n}\bt_{gh_i}(r_i)$.
Since $R_{c(g)}\unlhd T_{c(g)}$ one has that 
$1_{R_{c(g)}}=\sum\limits_{i=1}^{n}\bt_{gh_i}(r_i)1_{R_{c(g)}}\in \sum\limits_{i=1}^{n}[\bt_{gh_i}(R_{d(gh_i)})\cap R_{c(gh_{i})} ]= \sum\limits_{i=1}^{n}\bt_{gh_i}(R_{gh_i})$ which is an ideal of $R_{c(g)}.$ Hence  $R_{c(g)}=\sum\limits_{i=1}^{n}\bt_{gh_i}(R_{gh_i})$ and $\af$ is of finite type.
\end{proof}

\begin{remark}
The 
partial skew groupoid ring $R \star_\alpha G$ 
is, in the sense of \cite{lundstrom2004}, a groupoid graded ring.
Therefore, from \cite[Proposition 2.1.1]{lundstrom2004}
(or in a more general context \cite[Proposition 5]{lundstrom2006}),
it follows that $R \star_\alpha G$
is unital if ${\rm ob}(G)$ is finite.
In that case, if $\alpha$ is of finite type,
it follows from Proposition \ref{fintype} and \cite[Theorem 3.2]{BP}, 
that $R \star_\alpha G$ and $T \star_{\beta} G$ are Morita equivalent.
\end{remark}

\begin{cor}
If $\ob(G)$ is finite, then $T$ is right/left noetherian (artinian) 
if and only $R$ is right/left noetherian (artinian) and $\alpha$ is of finite type.
\end{cor}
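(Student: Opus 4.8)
The corollary should follow by combining the Morita equivalence observed in the preceding remark with the transfer of the noetherian/artinian property under Morita equivalence, together with Proposition \ref{fintype}. First I would reduce to the case where $\alpha$ is of finite type: if $T$ is right (left) noetherian (artinian), then each $T_e$, being a direct summand of $T$ as a right (left) $T$-module (since $T = \bigoplus_{e \in \ob(G)} T_e$ and $\ob(G)$ is finite), is itself right (left) noetherian (artinian); in particular each $T_e$ is unital (a noetherian or artinian ring which is a direct summand of the unital-looking $T$... more carefully, one argues that a nonzero artinian/noetherian ring arising this way must be unital, or one simply invokes the structure directly), so by Proposition \ref{fintype}, \eqref{unit}$\Rightarrow$\eqref{ft}, $\alpha$ is of finite type. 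Conversely, if $\alpha$ is of finite type, then by Proposition \ref{fintype} each $T_e$ is unital, and since $\ob(G)$ is finite, $T \star_\beta G$ is unital by the cited result of Lundström; moreover $R \star_\alpha G$ is unital as well.

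Second, with $\alpha$ of finite type and $\ob(G)$ finite, the remark preceding the corollary tells us that $R \star_\alpha G$ and $T \star_\beta G$ are Morita equivalent. Since the artinian and noetherian properties (left or right) are preserved under Morita equivalence of unital rings, $R \star_\alpha G$ is right (left) noetherian (artinian) if and only if $T \star_\beta G$ is. So it remains to connect this to the properties of $R$ and $T$ themselves. For this I would apply Proposition \ref{ONLYIF1} and its converse-type companions: since $\beta$ is a global (hence unital, as each $T_e$ is now unital) action, Theorem \ref{ParkGeneral} applies to $T \star_\beta G$, giving that $T \star_\beta G$ is right (left) artinian iff $T$ is right (left) artinian and $T_g = \{0\}$ for all but finitely many $g$; but $\ob(G)$ finite together with... wait—one needs $\mor(G)$ finite for the finitely-many condition to be automatic, which is not assumed. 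Let me restructure.

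**Revised plan.** The cleanest route is: $T$ right/left noetherian (artinian) $\iff$ $R$ right/left noetherian (artinian) and $\alpha$ of finite type. For the forward direction, assume $T$ has the chain condition. Then as above each $T_e$ has it (direct summand), hence each $T_e$ is unital, hence $\alpha$ is of finite type by Proposition \ref{fintype}. Now $R = \bigoplus_{e} R_e$ with $R_e = T_e 1_{R_e} \unlhd T_e$; since $T_e$ has the chain condition and is unital, so does the ideal $R_e$ — for noetherianity this is immediate, for artinianity of a one-sided ideal inside an artinian ring it holds too — and a finite direct sum of such $R_e$ (finitely many by $\ob(G)$ finite) gives that $R$ has the chain condition by Proposition \ref{directsumartinian} (and its noetherian analogue). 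For the reverse direction, assume $R$ has the chain condition and $\alpha$ is of finite type. Finite type gives each $T_e$ unital (Proposition \ref{fintype}), and $T_e = \sum_{i=1}^n \beta_{g_i}(R_{d(g_i)})$ is a finite sum of homomorphic images (indeed isomorphic copies of ideals) of the $R_{d(g_i)}$'s; since each $R_f$ inherits the chain condition from $R$ (Proposition \ref{exactsequence}/\ref{directsumartinian}), each $\beta_{g_i}(R_{d(g_i)})$ does too, and a finite sum — equivalently a homomorphic image of the finite direct sum — of modules with the chain condition has the chain condition, so $T_e$ does; then $T = \bigoplus_e T_e$ over finitely many $e$ has the chain condition by Proposition \ref{directsumartinian}.

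**Main obstacle.** The delicate point is the passage between one-sided chain conditions on $R$ and on $T$ when the only algebraic link is the pair of relations $R_e \unlhd T_e$ and $T_e 1_{R_e} = R_e$ from the globalization, since $T_e$ need not be a direct summand of itself as a module over the smaller ring and the ideal $R_e$ is only one-sided-chain-conditioned a priori through $T_e$. I expect the argument to run smoothly once one observes that $R_e = 1_{R_e} T_e 1_{R_e}$ (a corner-type description) so that $R_e$-submodule lattices embed into $T_e$-submodule lattices via multiplication by $1_{R_e}$, but checking that this embedding is order-preserving and strict, in both the left and right settings, is the step that needs care. Everything else is an assembly of Proposition \ref{fintype}, Proposition \ref{directsumartinian}, the Lundström unitality result, and the Morita-invariance of the chain conditions — though I note that the Morita equivalence is not strictly needed for the statement as phrased, which relates $T$ directly to $R$; it is the globalization relations, not Morita theory, that do the real work here.
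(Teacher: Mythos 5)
Your overall strategy matches the paper's: both directions are handled by combining Proposition \ref{fintype} with the decompositions $T=\bigoplus_{e}T_e$ and $R=\bigoplus_e R_e$, and you correctly observe that Morita equivalence is not actually needed for this statement --- the globalization relations do the work, exactly as in the paper's proof. The ``if'' direction of your revised plan (finite type gives $T_e=\sum_{i=1}^n\beta_{g_i}(R_{d(g_i)})$, a finite sum of unital ideals each isomorphic to some $R_{d(g_i)}$, hence carrying the chain condition, then sum over the finitely many $e$) is essentially the paper's argument. Your worry about the ``main obstacle'' is misplaced, though: since $1_{R_e}$ is a central idempotent of $T_e$, one-sided ideals of $R_e=T_e1_{R_e}$ are automatically $T_e$-submodules, so the transfer of chain conditions between $R_e$ and $T_e$ is immediate; the paper dispatches this with the single remark that $R$ is an ideal of $T$.

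The genuine gap is in the ``only if'' direction, at exactly the step you flag but do not resolve: you assert that $T_e$ noetherian (or artinian) implies $T_e$ unital, so that Proposition \ref{fintype} yields finite type. A noetherian or artinian ring need not be unital, so this implication requires an argument using the specific structure $T_e=\sum_{c(h)=e}\beta_h(R_{d(h)})$, a possibly infinite sum of ideals each generated by a central idempotent. The paper's proof supplies it: if no finite sub-sum equals $T_e$, the finite partial sums form a strictly ascending chain of ideals of $T_e$ (each generated by a central idempotent, since a finite sum of idempotent-generated ideals is again one), contradicting noetherianity; and the annihilators of those idempotents form a strictly descending chain, contradicting artinianity. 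Note that the artinian case cannot be reduced to the noetherian one here, so the annihilator trick is genuinely needed. Without some version of this chain argument your proof of the forward implication is incomplete.
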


\begin{proof}
Suppose that $\ob(G)$ is finite. 
First we show the ''if'' part.
Suppose that $R$ is right/left noetherian (artinian) and $\alpha$ is of finite type. 
Then by Proposition \ref{fintype}(ii), each $T_e$ is a finite sum of 
right/left noetherian (artinian)
$T_e$-modules, which implies that $T_e$ is right/left noetherian (artinian), for all $\ob(G).$ 
Since  $T = \bigoplus_{e\in \ob(G)} T_e$, it follows from Proposition \ref{directsumartinian},
that $T = \bigoplus_{e\in \ob(G)} T_e$ is right/left noetherian (artinian). 
Now we show the ''only if'' part.
If $T$ is right/left noetherian (artinian), so is $R,$ because $R$ is an ideal of $T.$
It remains to show that $\alpha$ is of finite type. 
Seeking a contradiction, suppose that there is $e\in \ob(G)$ 
such that $T_e$ is not a finite sum of ideals $\bt_{g}(R_{d(g)})$ with $g\in G(-,e)$. 
Then there is an infinite ascending sequence of these sums in which any term is 
generated by a central idempotent of $T_e$ . 
This contradicts the fact that $T_e$ is noetherian.
Also, the annihilators of these idempotents give an infinite descending
sequence of ideals of $T_e$, which contradicts the fact that $T_e$ is right artinian.
\end{proof}

\begin{cor}\label{art}
If $\ob(G)$ is finite, then $R \star_\alpha G$ is left/right artinian  
if and only if $T \star_\beta G$ is left/right artinian 
and $\alpha$ is of finite type.
In that case, both $R$ and $T$ are left/right artinian.
\end{cor}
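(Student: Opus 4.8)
The plan is to exploit the Morita equivalence between $R \star_\alpha G$ and $T \star_\beta G$ that is available when $\ob(G)$ is finite and $\alpha$ is of finite type, together with the characterization of finite type from Proposition \ref{fintype}. The key observation is that artinianity is a Morita invariant for unital associative rings, so once we know both $R \star_\alpha G$ and $T \star_\beta G$ are unital and Morita equivalent, left/right artinianity transfers between them. The subtlety is that this equivalence requires $\alpha$ to be of finite type (equivalently, each $T_e$ unital, by Proposition \ref{fintype}\eqref{unit}), so finite type cannot merely be assumed in one direction — it must be deduced in the ``only if'' direction as well.

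First I would prove the ``if'' direction: assume $T \star_\beta G$ is left/right artinian and $\alpha$ is of finite type. By the Remark preceding this corollary, $\ob(G)$ finite forces $R \star_\alpha G$ (and $T \star_\beta G$) to be unital, and finite type plus \cite[Theorem 3.2]{BP} yields the Morita equivalence $R \star_\alpha G \sim_{\mathrm{Morita}} T \star_\beta G$; hence $R \star_\alpha G$ is left/right artinian. For the ``only if'' direction, assume $R \star_\alpha G$ is left/right artinian. The main point is to establish that $\alpha$ is of finite type. Here I would argue that since $\ob(G)$ is finite, $S_0 := \oplus_{e \in \ob(G)} R_e \delta_e$ is a unital subring and a direct summand of $R \star_\alpha G$ as a one-sided $S_0$-module, so by Proposition \ref{lemmasummand} each $R_e$ is left/right artinian; then the description of finite type in terms of the globalization, combined with the ascending/descending chain arguments on central idempotents of $T_e$ used in the proof of the previous corollary (the sums $\sum \beta_{g_i}(R_{d(g_i)})$ are generated by central idempotents, and an infinite strictly ascending chain of them would pull back to an infinite chain inside the artinian ring $R \star_\alpha G$), forces the finite type condition. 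Once finite type is in hand, the Morita equivalence again applies and $T \star_\beta G$ is left/right artinian.

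Finally, for the last sentence, I would note that $R$ is an ideal (in fact a direct summand as a one-sided $R$-module) of $R \star_\alpha G$ via $S_0$ and (P4), so $R$ left/right artinian follows from Proposition \ref{lemmasummand}; and $T$ is a two-sided ideal of $T \star_\beta G$ containing the identity of each $T_e$, so once $\alpha$ is known to be of finite type, each $T_e$ is unital and $T = \oplus_{e \in \ob(G)} T_e$ is left/right artinian by Proposition \ref{directsumartinian} (using that each $T_e$, being a finite sum of copies of the artinian $R_e$ under the $\beta_{g_i}$, is artinian by Proposition \ref{subringartinian} and Proposition \ref{directsumartinian}), or alternatively because $T$ is an ideal of the artinian ring $T \star_\beta G$.

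I expect the main obstacle to be the ``only if'' direction, specifically verifying that artinianity of $R \star_\alpha G$ propagates to the finite type condition on $\alpha$ — one has to relate chains of ideals in the (possibly badly behaved) ring $T$ back to chains in $R \star_\alpha G$, which requires carefully tracking how the central idempotents of $T_e$ interact with the copy of $R_e$ sitting inside as an ideal, and invoking the fact that $T_e 1_{R_e} = R_e$ to transport the chain. The rest is essentially a bookkeeping exercise once the Morita machinery from \cite{BP} is cited.
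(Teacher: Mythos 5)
Your overall architecture (Morita equivalence once finite type is secured, the $S_0$-direct-summand step for artinianity of $R$, Proposition \ref{ONLYIF1} for the final sentence) agrees with the paper, and the ``if'' direction is fine. The genuine gap is in the ``only if'' direction, at exactly the point you flag as the main obstacle: deducing that $\alpha$ is of finite type from artinianity of $R \star_\alpha G$. You propose to rerun the central-idempotent chain argument from the corollary immediately preceding this one, but that argument takes as hypothesis that $T$ (equivalently each $T_e$) is artinian or noetherian, which is precisely what is \emph{not} available here. Your suggested repair --- that a strictly ascending chain of ideals $\sum_{i=1}^{n}\beta_{g_i}(R_{d(g_i)})$ of $T_e$ ``pulls back'' to a non-stabilizing chain inside the artinian ring $R \star_\alpha G$ --- does not work as stated: by axiom (ii) of a globalization, $R_e\cap \beta_h(R_{d(h)})=R_h$, which can be $\{0\}$ even when $\beta_h(R_{d(h)})\neq\{0\}$, so intersecting the chain with $R_e$ (and hence with $R\star_\alpha G$) can destroy strictness; moreover an ascending chain is irrelevant to artinianity, and the descending chain of annihilators of those idempotents again lives in $T_e$, not in $R\star_\alpha G$. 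Knowing only that each $R_e$ is left/right artinian (which is all the direct-summand step gives you) is strictly weaker than finite type, so no contradiction is produced.

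The paper closes this gap by a different mechanism that never touches chains in $T$: since $R$ is associative here (hence alternative) and $\alpha$ is unital, the main theorem for partial actions (Theorem \ref{ParkGeneralAlternative}) applied to the artinian ring $R\star_\alpha G$ yields $R_g=\{0\}$ for all but finitely many $g\in\mor(G)$; this is combined with axiom (iv) of a globalization to conclude that each $T_e$ is a finite sum of unital rings, hence unital, and then Proposition \ref{fintype} (unitality of all $T_e$ is equivalent to finite type) finishes. If you want to salvage your plan, replace the chain argument by an appeal to Theorem \ref{ParkGeneralAlternative} and Proposition \ref{fintype}; the rest of your outline then goes through as in the paper.
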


\begin{proof}
Suppose that $R \star_\alpha G$ is left/right artinian. 
By Proposition \ref{ONLYIF1}, we get that
$R_g = \{ 0 \}$, for all but finitely many $g \in {\rm mor}(G)$.
Thus, from (iv) in the definition of a globalization, we get that
each $T_g$ is a finite sum of unital rings.
From Proposition \ref{fintype}(iii), we thus get that $\alpha$ is of finite type.
Thus from Morita equivalence it follows that $T \star_\beta G$ is left/right artinian.
Conversely, if $T \star_\beta G$ is left/right artinian and $\alpha$ is of finite type, 
then again by Morita equivalence, we get that 
$R \star_\alpha G$ is left/right artinian.
The last part follows from Proposition \ref{ONLYIF1}.
\end{proof}

\begin{cor}
If $\ob(G)$ is finite and $\alpha$ is of finite type, then 
$R \star_\alpha G$ is left/right noetherian if and only if 
$T \star_\beta G$ is left/right noetherian.
In that case, both $R$ and $T$ are left/right noetherian. 
\end{cor}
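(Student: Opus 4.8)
The plan is to imitate the proof of Corollary~\ref{art} almost verbatim, substituting ``noetherian'' for ``artinian'' and invoking the noetherian branches of the auxiliary statements; the only structural ingredient is that left/right noetherianity is preserved under Morita equivalence of unital rings. So the first task is to check that the rings in question are unital. Since $\ob(G)$ is finite, the remark preceding Corollary~\ref{art} (based on \cite[Proposition~2.1.1]{lundstrom2004}) shows that $R \star_\alpha G$ is unital. Because $\alpha$ is moreover assumed to be of finite type, Proposition~\ref{fintype}(iii) yields that every $T_e$ is unital, hence $T = \bigoplus_{e\in\ob(G)} T_e$ is unital and, $\ob(G)$ being finite once more, $T \star_\beta G$ is unital too. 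Then, exactly as recorded in that same remark, Proposition~\ref{fintype} together with \cite[Theorem~3.2]{BP} gives that $R \star_\alpha G$ and $T \star_\beta G$ are Morita equivalent.

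Next I would invoke the fact that a Morita equivalence between unital rings induces an equivalence between their categories of left modules and, separately, between their categories of right modules, and that a module satisfies the ascending chain condition on submodules if and only if its image under such an equivalence does; applied to the regular modules this shows that $R \star_\alpha G$ is left/right noetherian if and only if $T \star_\beta G$ is left/right noetherian, which is the stated equivalence.

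For the final clause, assume $R \star_\alpha G$ --- equivalently, by the above, $T \star_\beta G$ --- is left/right noetherian. Since $\alpha$ is unital (as fixed at the start of the section), the noetherian case of Proposition~\ref{ONLYIF1} gives that $R$ is left/right noetherian. Now, with $\ob(G)$ finite and $\alpha$ of finite type, the first corollary of this section shows that $T$ is left/right noetherian as well; alternatively, one may argue directly: by Proposition~\ref{fintype}(ii) each $T_e$ is a finite sum of noetherian $T_e$-submodules, so $T = \bigoplus_{e\in\ob(G)} T_e$ is noetherian by Proposition~\ref{directsumartinian}.

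I expect the only point requiring care to be the verification that $T \star_\beta G$ is unital --- recall from the discussion after Theorem~\ref{ParkGeneralAlternative} that $T$ itself need not be unital in general --- so that the Morita-equivalence result \cite[Theorem~3.2]{BP} may legitimately be applied; here the hypothesis that $\alpha$ is of finite type is precisely what supplies this, via its equivalence with condition~(iii) of Proposition~\ref{fintype}. Everything else is a routine transcription of the artinian argument.
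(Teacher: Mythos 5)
Your proposal is correct and follows essentially the same route as the paper, whose entire proof reads ``This follows from Morita equivalence and Proposition~\ref{ONLYIF1}''; you have simply made explicit the unitality checks, the Morita invariance of noetherianity, and the appeal to the preceding corollary for the noetherianity of $T$. No gaps.
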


\begin{proof}
This follows from Morita equivalence and Proposition \ref{ONLYIF1}.
\end{proof}

We shall now prove some Maschke-type results
for partial skew groupoid rings associated with unital partial actions of groupoids on rings
(see Theorem~\ref{teo:Maschke} and Corollary~\ref{cor:Maschke}).

\begin{lema}\label{ss}
If $\ob(G)$ is finite, then $T$ is semisimple if and only 
if $R$ is semisimple and $\alpha$ is of finite type.
\end{lema}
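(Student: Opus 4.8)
The plan is to transfer semisimplicity between $R=\bigoplus_{e\in\ob(G)}R_e$ and $T=\bigoplus_{e\in\ob(G)}T_e$ one object-component at a time. Since $\ob(G)$ is finite, both are \emph{finite} direct sums of rings; hence each $R_e$ is a two-sided ideal of $R$, each $T_e$ is a two-sided ideal of $T$, and such a finite ring direct sum is semisimple precisely when each summand is. I will rely on two elementary facts about associative rings: (a) every two-sided ideal of a semisimple ring is semisimple (if $A=\bigoplus_i S_i$ with the $S_i$ simple and $I\unlhd A$, then, using that $Sx=0$ forces $x=0$ in a simple ring $S$, one gets $I=\bigoplus_i (I\cap S_i)$ with $I\cap S_i\in\{0,S_i\}$); and (b) a finite sum of semisimple two-sided ideals of a ring is again semisimple (each simple summand of each such ideal is in fact a two-sided ideal of the ambient ring, and a finite sum of simple ideals is a finite direct sum of a subfamily of them). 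I will also repeatedly use that a unital two-sided ideal $J$ of an associative ring $A$ satisfies $J=A\,1_J$ with $1_J$ a central idempotent of $A$; consequently, if $f:A\to A'$ is a ring isomorphism then $f(J)=A'f(1_J)$ is a unital two-sided ideal of $A'$ isomorphic to $J$. In particular $\beta_g(R_{d(g)})=T_{c(g)}\,\beta_g(1_{R_{d(g)}})$ is, for every $g\in\mor(G)$, a unital two-sided ideal of $T_{c(g)}$ isomorphic to $R_{d(g)}$.

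For the direction ``$R$ semisimple and $\alpha$ of finite type $\Rightarrow$ $T$ semisimple'', I would first note, by fact (a), that every $R_{d(g)}$ is semisimple. Then, fixing $e\in\ob(G)$, I would invoke Proposition~\ref{fintype}(ii) to write $T_e=\sum_{i=1}^n\beta_{g_i}(R_{d(g_i)})$ for suitable $g_1,\ldots,g_n\in G(-,e)$; by the observation above each $\beta_{g_i}(R_{d(g_i)})$ is a semisimple two-sided ideal of $T_e$, so $T_e$ is semisimple by fact (b). As $\ob(G)$ is finite, $T=\bigoplus_{e\in\ob(G)}T_e$ is then a finite direct sum of semisimple rings, hence semisimple.

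For the converse, ``$T$ semisimple $\Rightarrow$ $R$ semisimple and $\alpha$ of finite type'', I would first deduce from fact (a) that each $T_e$ (a two-sided ideal of $T$) is semisimple, then that each $R_e\unlhd T_e$ is semisimple (again by (a)), and hence, using finiteness of $\ob(G)$, that $R$ is semisimple. To obtain finite type, by Proposition~\ref{fintype} it suffices to show that each $T_e$ is unital. Here I would fix $e$, write $T_e=\bigoplus_{i=1}^k S_i$ with the $S_i$ simple, and use axiom (iv) of the globalization, $T_e=\sum_{c(h)=e}\beta_h(R_{d(h)})$: each $\beta_h(R_{d(h)})$ is a unital two-sided ideal of $T_e$, hence equals $\bigoplus_{i\in B_h}S_i$ for some $B_h\subseteq\{1,\ldots,k\}$, and its unitality forces $S_i$ to be unital for every $i\in B_h$; since these ideals sum to $T_e$ we have $\bigcup_h B_h=\{1,\ldots,k\}$, so every $S_i$ is unital and therefore $T_e$ is unital. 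Proposition~\ref{fintype} then yields that $\alpha$ is of finite type.

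The step I expect to be the main obstacle is the production of a unit for each $T_e$ in the converse direction. The difficulty is conceptual rather than computational: ``semisimple'' here means only ``finite direct sum of simple rings'', which---unlike semisimple artinian---entails neither that the ring nor that its simple summands are unital; so one cannot simply read off a unit from semisimplicity of $T_e$, but must extract it from the globalization structure (axiom (iv) together with the unitality of the ideals $R_{d(h)}$) before Proposition~\ref{fintype} can be applied. This same non-unitality issue is the point one has to be careful about in the justifications of facts (a) and (b).
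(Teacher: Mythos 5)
Your proof is correct and follows essentially the same route as the paper: both directions hinge on Proposition~\ref{fintype} together with transferring semisimplicity along the (unital) ideals $\bt_g(R_{d(g)})$ and the finite decompositions $R=\bigoplus_e R_e$, $T=\bigoplus_e T_e$. The only deviation is that you extract a unit for each $T_e$ from axiom (iv) of the globalization, whereas the paper deduces that $T$ has an identity directly from its semisimplicity; your version is the more careful one, since the paper's notion of semisimple (a finite direct sum of simple rings) does not by itself formally entail unitality.
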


\begin{proof}
Suppose that $T$ is semisimple. Then $T$ has an identity.
Thus, from Proposition~\ref{fintype}, it follows that 
$\alpha$ is of finite type.
Also, since $R$ is an ideal of $T$, it follows that $R$ is semisimple.
Conversely, suppose that $R$ is semisimple and
that $\alpha$ is of finite type. Then each $T_e$, for $e \in {\rm ob}(G)$, is a finite sum 
of semisimple rings. Thus, since ${\rm ob}(G)$ is finite, $T$ is semisimple.
\end{proof}

\begin{lema}\label{invertible}
For any positive integer $m$, the following hold:
\begin{itemize}

\item[(a)] $R$ has $m$-additive torsion
if and only if $T$ has $m$-additive torsion;

\item[(b)] if $T$ is unital, then $m$ is 
invertible in $R$ if and only if $m$ is invertible in $T$. 

\end{itemize}
\end{lema}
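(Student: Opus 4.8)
The plan is to exploit the concrete relationship between $R$ and $T$ furnished by the globalization, namely that each $R_e$ is an ideal of $T_e$ with $T_e 1_{R_e} = R_e$, together with the direct sum decompositions $R = \bigoplus_{e \in \ob(G)} R_e$ and $T = \bigoplus_{e \in \ob(G)} T_e$. Since additive torsion and invertibility of $m$ are purely additive/multiplicative properties that are detected componentwise on these direct sums, it suffices to compare $R_e$ with $T_e$ for a fixed $e$. Throughout I will silently use that $R_e \neq \{0\}$ may be assumed for the relevant objects (if $R_e = \{0\}$ the comparison is vacuous), and hence that $R_e$ is unital with identity $1_{R_e}$.

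For part (a), the forward direction is immediate: if $mr = 0$ for some nonzero $r \in R$, then $r \in T$ is a nonzero element killed by $m$, so $T$ has $m$-additive torsion. Conversely, suppose $mt = 0$ for some nonzero $t \in T$. Decomposing $t = \sum_e t_e$ with $t_e \in T_e$, some component $t_e \neq 0$, and $m t_e = 0$. Now I would use $T_e 1_{R_e} = R_e$: the element $t_e 1_{R_e}$ lies in $R_e \subseteq R$, and $m(t_e 1_{R_e}) = (m t_e) 1_{R_e} = 0$. The one genuine point to check is that $t_e 1_{R_e} \neq 0$; this is where one must argue that right multiplication by $1_{R_e}$ does not annihilate a nonzero element of $T_e$. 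Since $R_e \unlhd T_e$ is an ideal containing an identity element $1_{R_e}$ for itself, the map $T_e \ni x \mapsto x 1_{R_e}$ is a (left $T_e$-linear) idempotent projection onto $R_e$, so its kernel is a complementary ideal; if $t_e 1_{R_e} = 0$ then $t_e$ lies in that kernel. To push the argument through I may instead decompose $t_e = t_e 1_{R_e} + (t_e - t_e 1_{R_e})$ and observe that at least one summand is nonzero and $m$-torsion, and that the second summand, lying in the complementary ideal $\mathrm{Ann}$, is itself of the form considered — ultimately reducing to the case where the nonzero $m$-torsion element actually lies in $R_e$. I expect this bookkeeping about the complementary ideal to be the main (though minor) obstacle; it is resolved purely by the idempotent-projection structure of $R_e \unlhd T_e$.

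For part (b), assume $T$ is unital; then by Proposition \ref{fintype} each $T_e$ is unital and $\alpha$ is of finite type, and $1_T = \sum_e 1_{T_e}$. If $m$ is invertible in $T$, write $1_T = m u$ for some $u \in T$. I claim $m$ is then invertible in $R$: multiplying by $1_R = \sum_{e} 1_{R_e}$ (note $R$ is unital since $\ob(G)$ is finite — or, if $\ob(G)$ is not assumed finite here, work componentwise on each unital $R_e$), one has $1_{R_e} = 1_{T_e} 1_{R_e} = (m u_e) 1_{R_e} = m (u_e 1_{R_e})$ with $u_e 1_{R_e} \in R_e$, exhibiting a left inverse of $m$ inside $R_e$; a symmetric argument using $1_{R_e} 1_{T_e} = 1_{R_e}$ gives a right inverse, and the two coincide. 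Conversely, if $m$ is invertible in $R$, then in particular $m \cdot 1_{R_e}$ has an inverse $w_e$ in $R_e$ for each $e$; since $R_e$ acts on $T_e$ with $T_e 1_{R_e} = R_e$ and $1_{R_e}$ is central in $T_e$ (it is the identity of the ideal $R_e \unlhd T_e$, hence central), one checks that $\sum_e w_e$ together with the complementary idempotents assembles an inverse of $m$ in $T$; concretely $m \bigl( \sum_e w_e \bigr) 1_{R_e} = 1_{R_e}$ on the $R_e$-part, and on the complementary part $T_e(1-1_{R_e})$ one uses that $m$ is invertible there too because... — and here the cleanest route is to first reduce to the case $R = T$ being impossible only via finiteness, so I would instead argue: invertibility of the integer $m$ in a unital ring is equivalent to $\overline{m} \neq 0$ being a unit, which for $T$ unital is equivalent to $m$ being invertible in the center; and the centers of $R_e$ and $T_e$ are linked because $1_{R_e}$ is a central idempotent of $T_e$ with corner ring $R_e = 1_{R_e} T_e$. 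The main obstacle here is handling the complementary corner $(1_{T_e} - 1_{R_e})T_e$ on which $R$ gives no direct information; I expect to resolve it by noting that invertibility of $m \cdot 1$ in the unital ring $T_e$ is equivalent to invertibility of its image in $T_e / \mathcal{J}(T_e)$ (or simply to $m$ being a unit in $\mathbb{Z}/(\mathrm{char})$-type considerations), and that $R_e$ being a nonzero ideal of $T_e$ shares the same characteristic, so the two invertibility statements are equivalent.
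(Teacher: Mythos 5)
There is a genuine gap, and it is the same one in both halves: you never invoke condition (iv) of the definition of a globalization, namely $T_e=\sum_{c(h)=e}\bt_h(\psi_{d(h)}(R_{d(h)}))$. All your argument actually uses is that $R_e$ is a unital ideal of $T_e$ with $T_e1_{R_e}=R_e$, and under those hypotheses alone both assertions are false: take $T_e=R_e\oplus A$ with, say, $R_e=\mathbb{Q}$ and $A=\mathbb{Z}/4\mathbb{Z}$; then $R_e\unlhd T_e$ and $T_e1_{R_e}=R_e$, yet $T_e$ has $2$-additive torsion while $R_e$ has none, and $2$ is invertible in $R_e$ but not in $T_e$. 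Such a $T_e$ fails to be (part of) a globalization only because of (iv), which your argument never excludes. Concretely: in (a), the case $t_e1_{R_e}=0$ is not resolved --- there $t_e$ lies entirely in the complementary ideal, your proposed decomposition returns $t_e$ unchanged, and no mechanism is given for the promised ``reduction to the case where the torsion element lies in $R_e$''; in (b), the appeal to $R_e$ and $T_e$ ``sharing the same characteristic'' is precisely the point at issue and fails in the example above.

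The repair is to test against \emph{all} translates of the identities of $R$, not just $1_{R_e}$. Each $u_h:=\bt_h(1_{R_{d(h)}})$, for $h\in G(-,e)$, is a central idempotent of $T_e$ generating the ideal $\bt_h(R_{d(h)})$, and by (iv) these ideals sum to $T_e$ (finitely many suffice when $T$ is unital, by Proposition~\ref{fintype}). For (a): if $0\neq t\in T_e$, $mt=0$ and $tu_h=0$ for every $h$, then writing $t=\sum_{i=1}^{n}u_{h_i}x_i$ and forming the idempotent $u=u_{h_1}+u_{h_2}(1-u_{h_1})+\cdots$, one gets $t=ut=tu=0$, a contradiction; hence $tu_h\neq 0$ for some $h$, and $\bt_{h^{-1}}(tu_h)=\bt_{h^{-1}}(t)1_{R_{d(h)}}$ is a non-zero $m$-torsion element of $R_{d(h)}\subseteq R$. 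For (b): if $mv_f=1_{R_f}$ for all $f$, put $w_i=\bt_{h_i}(v_{d(h_i)})$, so that $mw_i=u_{h_i}$; then $w=w_1+w_2(1-u_{h_1})+w_3(1-u_{h_1})(1-u_{h_2})+\cdots$ satisfies $mw=1_{T_e}$. This covering argument is exactly what drives the proof of \cite[Proposition 1.22]{FL}, to which the paper defers; the forward direction of (a) and the direction ``$m$ invertible in $T$ $\Rightarrow$ $m$ invertible in $R$'' of (b) in your proposal are fine.
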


\begin{proof}
Analogous to the proof of \cite[Proposition 1.22]{FL}.
\end{proof}

\begin{teo}\label{teo:Maschke}
If ${\rm mor}(G)$ is finite,
$R$ is semisimple and for every $e\in \ob(G)$, 
$| G_e |$ is invertible in $R$, then $R \star_\alpha G$ is semisimple.
\end{teo}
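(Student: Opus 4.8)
The plan is to reduce the statement to the classical Maschke theorem for skew group rings by using the globalization $\beta$ of $\alpha$ on the ring $T$, together with Morita equivalence. First I would observe that since $\mathrm{mor}(G)$ is finite, in particular $\ob(G)$ is finite, so $R \star_\alpha G$ is unital by the remark after Proposition~\ref{fintype}. Moreover, since $R$ is semisimple it is in particular left/right artinian, hence each $R_g$ is artinian, and because $\mathrm{mor}(G)$ is finite the finite-type condition is automatic: indeed for each $e \in \ob(G)$ the set $G(-,e)$ is finite, so axiom (iv) already exhibits $T_e$ as a finite sum $\sum_{c(h)=e}\beta_h(R_{d(h)})$, which by Proposition~\ref{fintype} means $\alpha$ is of finite type and each $T_e$ is unital; thus $T = \bigoplus_{e \in \ob(G)} T_e$ is unital. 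By Lemma~\ref{ss}, $T$ is semisimple. By the remark preceding Corollary~\ref{art}, $R \star_\alpha G$ and $T \star_\beta G$ are Morita equivalent, and semisimplicity is a Morita invariant, so it suffices to prove that $T \star_\beta G$ is semisimple.

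Next I would handle the global skew groupoid ring $T \star_\beta G$ directly. Since $\ob(G)$ is finite and $\beta$ is global, $T \star_\beta G$ decomposes into a finite direct sum of pieces indexed by the connected components of $G$; on each component one may fix a representative object $e$ and identify the corresponding summand, up to Morita equivalence (a generalized-matrix-ring / corner argument as in Corollary~\ref{cor:genmatrixring}), with the classical skew group ring $T_e \star_{\beta_e} G_e$, where $G_e = G(e,e)$ is a finite group and $\beta_e$ is the restriction of $\beta$. Here I would need that $T_e$ is semisimple (from semisimplicity of $T$ and Proposition~\ref{directsumartinian}, since the $T_e$ are its ideal direct summands) and that $|G_e|$ is invertible in $T_e$. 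For the latter, note $|G_e|$ is invertible in $R$ by hypothesis, hence in $R_e$, hence — since $R_e \unlhd T_e$ and $T_e$ is unital with $T_e 1_{R_e} = R_e$, so $1_{R_e}$ is a central idempotent and in fact $R_e$ contains a nonzero multiple of $1_{T_e}$ making the relevant integer invertible — by Lemma~\ref{invertible}(b) that $|G_e|$ is invertible in $T_e$. Then classical Maschke (the associative, unital, finite-group case) gives that $T_e \star_{\beta_e} G_e$ is semisimple, and taking the finite direct sum over components shows $T \star_\beta G$ is semisimple.

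Finally I would assemble the pieces: $T \star_\beta G$ semisimple, Morita equivalence with $R \star_\alpha G$, and Morita invariance of semisimplicity, yield that $R \star_\alpha G$ is semisimple, as claimed. The main obstacle I anticipate is the bookkeeping in the middle step: making the reduction of a connected global skew \emph{groupoid} ring to a classical skew \emph{group} ring precise (the Morita equivalence between $T \star_\beta G$ restricted to one component and $T_e \star_{\beta_e} G_e$), and carefully tracking invertibility of $|G_e|$ as it passes from $R$ to $R_e$ to $T_e$ through the ideal inclusion $R_e \unlhd T_e$. If one prefers to avoid the component-wise reduction, an alternative is to invoke a groupoid-graded Maschke-type averaging argument directly on $T \star_\beta G$, constructing an $(T\star_\beta G)$-bimodule splitting of any submodule inclusion by averaging over each $G_e$ and dividing by $|G_e|$; this is essentially the approach of \cite{FL} adapted to the groupoid setting, and is likely how the authors proceed.
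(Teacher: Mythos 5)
Your proposal is correct and its overall skeleton coincides with the paper's: both arguments pass to the globalization $\beta$ on $T$, use Lemma~\ref{ss} to get $T$ semisimple (with the finite-type condition supplied automatically by the finiteness of $\mor(G)$, as you note), transfer invertibility of $|G_e|$ from $R$ to $T$ via Lemma~\ref{invertible}(b), prove semisimplicity of the \emph{global} skew groupoid ring $T \star_\beta G$, and pull the conclusion back through the Morita equivalence. The one genuine divergence is the middle step: the paper observes that $T \star_\beta G$ is strongly $G$-graded and simply cites the Maschke-type result \cite[Proposition 10(b)]{lundstrom2006} for strongly groupoid-graded rings, whereas you propose to prove that step by hand, decomposing $G$ into connected components and identifying each component's contribution with a matrix ring $M_n(T_e \star_{\beta_e} G_e)$ over a classical skew group ring, then invoking the classical Maschke theorem. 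Your reduction does work (the isomorphism $r\delta_g \mapsto \beta_{h_{c(g)}}^{-1}(r)\,\delta_{h_{c(g)}^{-1} g h_{d(g)}}$ placed in the appropriate matrix entry, for fixed $h_f \in G(e,f)$, is multiplicative precisely because $\beta$ is global), and it buys self-containedness at the cost of the bookkeeping you anticipate; the paper's citation is shorter but imports an external graded-ring result. Your secondary remark about $1_{R_e}$ being ``a nonzero multiple of $1_{T_e}$'' is not needed and not quite right as stated --- Lemma~\ref{invertible}(b) already transfers invertibility of the integer $m=|G_e|$ from $R$ to the unital ring $T$ directly, which is all that is required.
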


\begin{proof}
From Lemma \ref{ss} it follows that $T$ is semisimple.
From Lemma \ref{invertible}(b), we get that 
for every $e\in \ob(G)$, $| G_e |$ is invertible in $T$.
Thus, since $T \star_\beta G$ is strongly graded by $G$,
we get, from \cite[Proposition 10(b)]{lundstrom2006},
that $T \star_\beta G$ is semisimple.
From Morita equivalence, we get that $R \star_\alpha G$ is semisimple.
\end{proof}

\begin{defi}
Suppose that ${\rm mor}(G)$ is finite.  
The \emph{trace map} was defined in \cite{BP} as
\begin{displaymath}
	\mathrm{tr}_\alpha\colon R \ni x\mapsto \sum_{g\in \mor(G)}\af_g(x1_{g\m})\in R^\alpha,
\end{displaymath}
where $R^\alpha=\{x\in R\mid \af_g(x1_{g\m})=x1_g,\, \text{for all}\, g\in G\}.$
\end{defi}

\begin{teo}\label{teo:modulessubmodules}
Suppose that ${\rm mor}(G)$ is finite.
Let $V$ be a left $R \star_\alpha G$-module and let $W$ be a submodule of $V$. 
If $\mathrm{tr}_\alpha(1_R)$ is invertible in $R$ and 
$W$ is a direct summand of $V$ as an $R$-module, 
then $W$ is a direct summand of $V$ as an $R \star_\alpha G$-module.
\end{teo}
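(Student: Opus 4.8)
The plan is to mimic the classical Maschke averaging argument, adapted to the partial skew groupoid ring setting. Let $\pi\colon V\to W$ be an $R$-module projection onto $W$ (which exists by hypothesis). We want to modify $\pi$ into an $R\star_\alpha G$-module projection $\tilde\pi\colon V\to W$. Since $\mathrm{mor}(G)$ is finite, we can sum over all morphisms. First I would define, for $v\in V$,
\begin{displaymath}
\tilde\pi(v) = c^{-1}\sum_{g\in \mor(G)}(1_g\delta_g)\,\pi\bigl((1_{g\m}\delta_{g\m})\,v\bigr),
\end{displaymath}
where $c = \mathrm{tr}_\alpha(1_R)$, which by hypothesis is invertible in $R$ (and one checks it lies in $R^\alpha$, hence acts centrally enough on $V$ through the $R\star_\alpha G$-action to make the scalar $c^{-1}$ meaningful; more precisely $c^{-1}$ should be interpreted as multiplication by the inverse in $R$, acting on $V$ via $R\hookrightarrow R\star_\alpha G$ through $x\mapsto \sum_{e\in\ob(G)} x1_{R_e}\delta_e$). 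The key steps are then: (1) verify that $\tilde\pi$ maps $V$ into $W$, using that $\pi$ has image in $W$ and $W$ is an $R\star_\alpha G$-submodule so the left factors $(1_g\delta_g)$ keep us inside $W$; (2) verify $\tilde\pi|_W = \id_W$, which is where the normalization by $c$ enters — for $w\in W$ we have $\pi((1_{g\m}\delta_{g\m})w) = (1_{g\m}\delta_{g\m})w$ since that element is in $W$, so $\sum_g (1_g\delta_g)(1_{g\m}\delta_{g\m})w = \bigl(\sum_g \alpha_g(1_{g\m})\delta_e\text{-type terms}\bigr)w = \mathrm{tr}_\alpha(1_R)\,w = c\,w$, and then $c^{-1}$ cancels; (3) verify that $\tilde\pi$ is $R\star_\alpha G$-linear, i.e. $\tilde\pi((r_h\delta_h)v) = (r_h\delta_h)\tilde\pi(v)$ for all generators $r_h\delta_h$.

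Step (3) is the main obstacle and requires the customary re-indexing trick. When we compute $\tilde\pi((r_h\delta_h)v)$ we get $c^{-1}\sum_g (1_g\delta_g)\pi((1_{g\m}\delta_{g\m})(r_h\delta_h)v)$; using $R$-linearity of $\pi$ we want to pull the factor $(r_h\delta_h)$ out to the left past $\pi$, but $(1_{g\m}\delta_{g\m})(r_h\delta_h)$ is not of the form $r'\cdot(r_h\delta_h)$ with $r'\in R$. The standard resolution is to substitute $g = hk$ (equivalently sum over $k$ with $g\m = k\m h\m$, noting composability constraints $d(h)=c(k)$), so that $(1_g\delta_g) = (r_h\delta_h)(\text{something}_k\delta_k)$ up to the idempotents $1_g,1_h$, and the factor $(1_{g\m}\delta_{g\m})(r_h\delta_h) = $ (a scalar in $R$)$\cdot(1_{k\m}\delta_{k\m})$. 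This is exactly the computation carried out in the associative group-graded case in \cite{FL} and in \cite{lundstrom2006}, and the partiality only forces one to carry the idempotents $1_g$ along; the groupoid structure only restricts which compositions $hk$ are defined, but the sum $\sum_{g\in\mor(G)}$ automatically ranges over precisely the composable pairs once one fixes $h$, because $(1_{g\m}\delta_{g\m})(r_h\delta_h)=0$ unless $(g\m,h)\in G^2$. One must check that as $g$ runs over $\mor(G)$ with $(g\m,h)\in G^2$, the element $k$ with $g=hk$ runs bijectively over the morphisms with $d(h)=c(k)$, which is immediate from the groupoid axioms.

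Finally, I would assemble these three facts: $\tilde\pi\colon V\to W$ is an $R\star_\alpha G$-module homomorphism with $\tilde\pi|_W=\id_W$, hence $\tilde\pi$ is a projection and $V = W\oplus\ker\tilde\pi$ as $R\star_\alpha G$-modules, giving the desired splitting. A technical point worth isolating as a preliminary observation is that $\mathrm{tr}_\alpha(1_R)\in R^\alpha$ and in fact $\mathrm{tr}_\alpha(1_R) = \sum_{g\in\mor(G)}\alpha_g(1_{g\m})$, which, when written in $R=\oplus_{e}R_e$, has component in $R_e$ equal to $\sum_{g\colon c(g)=e}\alpha_g(1_{g\m})$; its invertibility in $R$ is equivalent to invertibility of each such component in $R_e$, and this is what makes $c^{-1}$ act well as a central-type scalar on $V$ (one uses that $R^\alpha$ is contained in the center of $R\star_\alpha G$, or at least commutes with the relevant elements — this is the analogue of \cite[Lemma 1.?]{FL} and should be cited or checked). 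With that in hand, everything else is bookkeeping with the idempotents $1_g$.
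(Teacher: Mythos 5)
Your proposal is correct and follows essentially the same route as the paper: the paper's own (sketched) proof defines exactly this averaging operator, $\psi(v)= l \sum_{g\in \mor(G)} 1_{g^{-1}} \delta_{g^{-1}} \pi(1_g \delta_g v)$ with $l=(\mathrm{tr}_\alpha(1_R))^{-1}$, which coincides with your $\tilde\pi$ after the re-indexing $g\leftrightarrow g^{-1}$, and likewise defers the linearity verification to the analogous argument in \cite[Theorem 3.2]{FL}. Your write-up is in fact more explicit than the paper's about the re-indexing over composable pairs and the role of the idempotents $1_g$.
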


\begin{proof}
We will only provide a sketch of the full proof
as it is analogous to the proof of \cite[Theorem 3.2]{FL}.
Suppose that $\pi : V \to W$ is an $R$-projection.
Put $l=(\mathrm{tr}_\alpha(1_R))^{-1}$.
For any $v\in V$, we define
$\psi(v)= l \sum_{g\in \mor(G)} 1_{g^{-1}} \delta_{g^{-1}} \pi(1_g \delta_g v)$.
Clearly, this yields a well-defined map $\psi : V \to W$
and one can check that $\psi$ is in fact a left $R \star_\alpha G$-module homomorphism.
We notice that $\psi(w)=w$ for any $w\in W$. Thus, $\psi$ is a projection onto $W$.
\end{proof}
The following is an immediate consequence of Theorem~\ref{teo:modulessubmodules}.

\begin{cor}\label{cor:Maschke}
If ${\rm mor}(G)$ is finite, $R$ is semisimple and 
$tr_\alpha(1_R)$ is invertible in $R$, then $R \star_\alpha G$ is semisimple.
\end{cor}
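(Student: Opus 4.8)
The plan is to derive this directly from Theorem~\ref{teo:modulessubmodules} by specializing it to the regular module, exactly as the sentence preceding the corollary suggests. First I would recall that, since $R$ is semisimple, it is in particular unital (a finite direct sum of simple unital rings), so $1_R$ makes sense and $R$ is artinian and noetherian; hence every left $R$-module is a direct sum of simple submodules, and in particular \emph{every} submodule of every $R$-module is a direct summand as an $R$-module. Thus the $R$-splitting hypothesis in Theorem~\ref{teo:modulessubmodules} is automatically satisfied for any left $R\star_\alpha G$-module $V$ and any submodule $W$.

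Next I would observe that $R\star_\alpha G$ is unital: since ${\rm mor}(G)$ is finite, in particular $\ob(G)$ is finite, so by the remark following Proposition~\ref{fintype} (invoking \cite[Proposition 2.1.1]{lundstrom2004}) the ring $R\star_\alpha G$ has an identity element. Therefore, to prove $R\star_\alpha G$ semisimple it suffices to show that every left ideal is a direct summand, equivalently that every submodule of the left regular module $V = R\star_\alpha G$ is a direct summand as an $R\star_\alpha G$-module. Given any such submodule $W$, apply the previous paragraph to get that $W$ is an $R$-module direct summand of $V$, and then apply Theorem~\ref{teo:modulessubmodules}, whose remaining hypothesis---that $\mathrm{tr}_\alpha(1_R)$ is invertible in $R$---is exactly the assumption of the corollary. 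This yields that $W$ is a direct summand of $V$ as an $R\star_\alpha G$-module, and since $W$ was an arbitrary left ideal, $R\star_\alpha G$ is semisimple.

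I expect the only genuinely delicate point to be the bookkeeping around definitions rather than any real obstacle: one must make sure that "semisimple" in the sense used here (finite direct sum of simple rings) is indeed equivalent, for a unital ring, to "every submodule of every module is a direct summand", and in particular to the left regular module being a finite direct sum of simple left ideals; for associative unital rings this is the standard Wedderburn--Artin picture, and $R\star_\alpha G$ is associative here because $R$ is. A secondary point is to confirm that finiteness of ${\rm mor}(G)$ really does force $R\star_\alpha G$ to be unital in the generality needed; this is handled by the groupoid-graded-ring remark already in the text, so no new work is required. Everything else is a one-line application of Theorem~\ref{teo:modulessubmodules}.
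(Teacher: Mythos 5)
Your proof is correct and is exactly the derivation the paper intends: the authors present the corollary as an immediate consequence of Theorem~\ref{teo:modulessubmodules}, and your specialization to the left regular module---using semisimplicity of $R$ to supply the $R$-splitting hypothesis for an arbitrary left ideal $W$, and finiteness of $\ob(G)$ to guarantee that $R \star_\alpha G$ is unital---is that argument spelled out. The one caveat, which you yourself flag, is that the paper's formal definition of ``semisimple'' (finite direct sum of simple rings) must be read in this section as classical artinian semisimplicity for the implication ``$R$ semisimple $\Rightarrow$ every $R$-submodule is a direct summand'' to be valid; this is clearly the authors' intent here, so no further work is needed.
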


\end{document}